\DeclareMathAlphabet      {\mathup}{OT1}{\sfdefault}{m}{n}
\newcommand\la{\leftarrow}
\newcommand\id{\mathrm{id}}
\newcommand\ten{\otimes}
\newcommand\hten{\hat{\otimes}}
\newcommand\vareps{\varepsilon}
\newcommand\eps{\epsilon}
\newcommand\GT{\mathrm{GT}}
\renewcommand\H{\mathrm{H}}
\newcommand\z{\mathrm{Z}}
\newcommand\HH{\mathrm{HH}}
\newcommand\Z{\mathbb{Z}}
\newcommand\Q{\mathbb{Q}}
\newcommand\R{\mathbb{R}}
\newcommand\Cx{\mathbb{C}}
\newcommand\vv{\mathbb{V}}
\newcommand\bA{\mathbb{A}}
\newcommand\bD{\mathbb{D}}
\newcommand\bG{\mathbb{G}}
\newcommand\bK{\mathbb{K}}
\newcommand\bL{\mathbb{L}}
\newcommand\bP{\mathbb{P}}
\newcommand\bS{\mathbb{S}}
\newcommand\C{\mathcal{C}}
\newcommand\cD{\mathcal{D}}
\newcommand\cP{\mathcal{P}}
\newcommand\cQ{\mathcal{Q}}
\newcommand\cU{\mathcal{U}}
\newcommand\cW{\mathcal{W}}
\newcommand\sA{\mathscr{A}}
\newcommand\sD{\mathscr{D}}
\newcommand\sE{\mathscr{E}}
\newcommand\sF{\mathscr{F}}
\newcommand\sH{\mathscr{H}}
\newcommand\sK{\mathscr{K}}
\newcommand\sL{\mathscr{L}}
\newcommand\sM{\mathscr{M}}
\newcommand\sN{\mathscr{N}}
\newcommand\sO{\mathscr{O}}
\newcommand\sP{\mathscr{P}}
\newcommand\sR{\mathscr{R}}
\newcommand\sT{\mathscr{T}}
\newcommand\X{\mathfrak{X}}
\newcommand\fS{\mathfrak{S}}
\newcommand\fX{\mathfrak{X}}
\newcommand\fY{\mathfrak{Y}}
\renewcommand\L{\Lambda}
\newcommand\sHom{\mathscr{H}\!\mathit{om}}
\newcommand\sHHom{\uline{\sHom}}
\newcommand\sDiff{\mathscr{D}\!\mathit{iff}}
\newcommand\Alg{\mathrm{Alg}}
\newcommand\Ass{\mathrm{Ass}}
\newcommand\Mod{\mathrm{Mod}}
\newcommand\Hom{\mathrm{Hom}}
\newcommand\map{\mathrm{map}}
\newcommand\cyc{\mathrm{cyc}}
\newcommand\ctr{\mathrm{ctr}}
\newcommand\HHom{\underline{\mathrm{Hom}}}
\newcommand\DDer{\underline{\mathrm{Der}}}
\newcommand\End{\mathrm{End}}
\newcommand\cocone{\mathrm{cocone}}
\newcommand\dg{\mathrm{dg}}
\newcommand{\brh}{\llbracket \hbar \rrbracket}
\newcommand{\brhh}{\llbracket \hbar^2 \rrbracket}
\newcommand\im{\mathrm{Im\,}}
\newcommand\ex{\mathrm{ex}}
\newcommand\Co{\mathrm{Co}}
\newcommand\CoS{\mathrm{CoS}}
\newcommand\loc{\mathrm{loc}}
\DeclareMathOperator\Spec{\mathrm{Spec}\,}
\newcommand\Sp{\mathrm{Sp}}
\newcommand\BiSp{\mathrm{BiSp}}
\newcommand\Pol{\mathrm{Pol}}
\newcommand\Lag{\mathrm{Lag}}
\newcommand\poly{\mathrm{poly}}
\newcommand\Com{\mathrm{Com}}
\newcommand\Comp{\mathrm{Comp}}
\newcommand\nondeg{\mathrm{nondeg}}
\newcommand\TDO{\mathrm{TDO}}
\newcommand\inv{\mathrm{inv}}
\newcommand\ad{\mathrm{ad}}
\renewcommand\d{\tilde{d}}
\newcommand\<{\langle}
\renewcommand\>{\rangle}
\newcommand\Lim{\varprojlim}
\newcommand\into{\hookrightarrow}
\newcommand\xra{\xrightarrow}
\newcommand\xla{\xleftarrow}
\newcommand\pr{\mathrm{pr}}
\newcommand\pre{\mathrm{pre}}
\newcommand\bt{\bullet}
\newcommand\by{\times}
\DeclareMathOperator\mc{\mathrm{MC}}
\DeclareMathOperator\mmc{\underline{\mathrm{MC}}}
\newcommand\Rees{\mathrm{Rees}}
\newcommand\Symm{\mathrm{Symm}}
\newcommand\et{\acute{\mathrm{e}}\mathrm{t}}
\newcommand\an{\mathrm{an}}
\newcommand\Tot{\mathrm{Tot}\,}
\newcommand\pd{\partial}
\newcommand\half{\frac{1}{2}}
\newcommand\Ad{\mathrm{Ad}}
\newcommand\rig{\mathrm{rig}}
\newcommand\Crit{\mathrm{Crit}}
\newcommand\gr{\mathrm{gr}}
\newcommand\Lie{\mathrm{Lie}}
\newcommand\dR{\mathrm{dR}}
\newcommand\DR{\mathrm{DR}}
\newcommand\op{\mathrm{opp}}
\newcommand\co{\colon\thinspace}
\newcommand\oR{\mathbf{R}}
\newcommand\oL{\mathbf{L}}
\newcommand\uleft\underleftarrow
\newcommand\uline\underline
\newcommand\uright\underrightarrow
\newcommand{\tps}{\texorpdfstring}
\DeclareMathOperator{\hatHHom}{\uline{\mathrm{H}\widehat{\mathrm{o}}\mathrm{m}}}
\DeclareMathOperator{\hatTot}{\mathrm{T}\widehat{\mathrm{o}}\mathrm{t}}
\newtheorem{theorem}{Theorem}[section]
\newtheorem{proposition}[theorem]{Proposition}
\newtheorem{corollary}[theorem]{Corollary}
\newtheorem{lemma}[theorem]{Lemma}
\newtheorem*{theorem*}{Theorem}
\newtheorem*{proposition*}{Proposition}
\newtheorem*{corollary*}{Corollary}
\newtheorem*{lemma*}{Lemma}
\newtheorem*{conjecture*}{Conjecture}
\theoremstyle{definition}
\newtheorem{definition}[theorem]{Definition}
\newtheorem*{definition*}{Definition}
\newtheorem*{notation*}{Notation}
\newtheorem{setting}[theorem]{Setting}
\theoremstyle{remark}
\newtheorem{example}[theorem]{Example}
\newtheorem{examples}[theorem]{Examples}
\newtheorem{remark}[theorem]{Remark}
\newtheorem{remarks}[theorem]{Remarks}
\newtheorem*{example*}{Example}
\newtheorem*{examples*}{Examples}
\newtheorem*{remark*}{Remark}
\newtheorem*{remarks*}{Remarks}
\newtheorem*{exercise*}{Exercise}
\newtheorem*{property*}{Property}
\newtheorem*{properties*}{Properties}
\begin{document}
 
\begin{abstract} 
 We extend  \cite{poisson,CPTVV} to  establish a correspondence between exact shifted symplectic structures and non-degenerate shifted Poisson structures with formal derivation, a concept  generalising constructions by De Wilde and Lecomte \cite{DeWildeLecomteExactSymplectic}.  Our formulation is sufficiently general to encompass derived algebraic, analytic and $\C^{\infty}$-stacks, as well as Lagrangians and non-commutative generalisations. We also show that non-degenerate shifted Poisson structures with formal derivation
 carry unique self-dual deformation quantisations in any setting where the latter can be formulated.
 
One application is that for (not necessarily exact) $0$-shifted symplectic structures in analytic and $\C^{\infty}$ settings, it follows that the author's earlier parametrisations of quantisations \cite{DQnonneg,DQpoisson} are in fact independent of any choice of associator, and generalise Fedosov's parametrisation of quantisations for classical manifolds. 

Our main application  is to complex  $(-1)$-shifted symplectic structures, showing that our unique quantisation of the canonical exact structure, a sheaf of twisted $BD_0$-algebras with derivation, gives rise to the 
perverse sheaf of vanishing cycles from \cite{BBDJS}, equipped with its monodromy operator.
\end{abstract}

\title[Deformation quantisation of  exact shifted symplectic structures]{Deformation quantisation of  exact shifted symplectic 
structures, 
with an application to vanishing cycles}

\author{J. P. Pridham}

\maketitle

\section*{Introduction}

Shifted symplectic structures in derived algebraic geometry were introduced in \cite{PTVV} as cocycles  in the truncated derived de Rham  complex $ (\prod_{p \ge 2}\oL\Omega^p[p], \delta \pm d)$ satisfying a non-degeneracy condition on $\omega_2$; closely related  concepts  in the  pre-existing graded manifold literature notably include the homotopy symplectic structures of \cite{KhudaverdianVoronov}. An equivalence was established in \cite{poisson,CPTVV} between shifted symplectic structures and non-degenerate shifted Poisson structures, and the formulation of \cite{poisson} readily admits an interpretation as a homotopical Legendre transformation generalising the equivalence of \cite{KhudaverdianVoronov}.

A shifted symplectic structure $\omega$ is called exact if it comes equipped with a homotopy $h$ killing its image in (non-truncated) derived de Rham cohomology. Equivalently, an exact  shifted symplectic structure  can be characterised as a cocycle $\alpha$ in the cone of $d \co  \oL\Omega^0 \to\oL\Omega^1$ such that the associated pre-symplectic structure $d \alpha_1$ satisfies a non-degeneracy condition.  

Negatively shifted symplectic structures are automatically exact, a phenomenon underpinning construction of a perverse sheaf of vanishing cycles  associated to $(-1)$-shifted symplectic structures in \cite{BBDJS}, and further refinements in  work such as \cite{KinjoParkSafronov,HHR1}. On underived differentiable manifolds,  $0$-shifted structures are locally exact, which facilitates the parametrisation of deformation quantisations in
\cite{fedosov,DeWildeLecomte,deligneDefFnsSymplectic}. 

In this paper, we show how to extend the homotopical Legendre transformation of \cite{poisson} to
give an equivalence between  exact  shifted symplectic structures and non-degenerate shifted Poisson structures equipped with certain formal derivations, generalising the data of \cite{DeWildeLecomteExactSymplectic} in the classical $0$-shifted case. Our results are couched in sufficient generality to apply in derived algebraic, $\C^{\infty}$ and analytic geometries, as well as to their non-commutative analogues, namely shifted bisymplectic and double Poisson structures, both algebraic and analytic, and also to shifted Lagrangians and co-isotropic structures.

We establish that the analogue of exactness for  a shifted Poisson structure $\pi = \sum_{i \ge 2} \pi_i$ is a homotopy $D$ killing the element $\sum_{i \ge 2} (i-1)\pi_i$ in Poisson cohomology (Proposition \ref{exactcompprop} and Examples \ref{poissonex}). 
Algebraic interpretations of such structures $D$ are given in \S \ref{interpretalgsn}. When $\pi =\pi_2$, examples are given by closed derivations satisfying  $D\{a,b\}_{\pi}=\{Da,b\}_{\pi} \pm \{a, Db\}_{\pi} - \{a,b\}_{\pi}$ (\S \ref{strictsn}). 

We also address deformation quantisation questions, looking at quantisations of non-degenerate shifted Poisson structures to which the formal derivation lifts. In Corollary \ref{exactquantcor}, Examples \ref{quantex} and \S \ref{interpretDQsn}, we show that every non-degenerate shifted Poisson structure with formal derivation, and hence any exact shifted symplectic structure, has an essentially unique self-dual deformation quantisation with $\hbar\pd_{\hbar}$-derivation, for any shift with a notion of self-dual deformation quantisation (i.e. $n \ge -1$ for Poisson structures and $n\ge 0$ for co-isotropic structures). The same approach allows spaces of shifted Poisson structures with formal derivation to be reinterpreted as spaces of pairs $(\pi_{\hbar}, \sD_{\hbar})$, where $\pi_{\hbar}$ is a Poisson structure over the power series ring and $\sD_{\hbar}$ a Poisson $\infty$-derivation fixing  $\hbar$ (Corollary \ref{exactquantbrhcor} and \S \ref{formalderivationsn}). 

A significant application to shifted symplectic structures without exact data is in Proposition \ref{DWLprop}, Corollary \ref{DWLcor1} and Examples \ref{DWLex}, showing that in $\Cx$-analytic and  $\C^{\infty}$ contexts where the de Rham theorem holds, 
 the parametrisations of  self-dual (weak) quantisations of shifted symplectic ($n \ge 0$) and shifted Lagrangian ($n \ge 1$) structures given by even associators are all equivalent. The preferred  quantisation of a $0$-shifted symplectic structure  agrees with that given by generalising the method of De Wilde--Lecomte, and for manifolds our parametrisation of quantisations agrees with Fedosov's (Example \ref{DWLex}.(\ref{fedosovex})).  
 
 Our main applications are to $(-1)$-shifted symplectic structures, since these are known both to be canonically exact, and to admit a notion of quantisation with duality. 
Theorem \ref{DCritDmodthm} shows that on the derived critical locus of a function $f$, the unique self-dual quantisation  with $\hbar\pd_{\hbar}$-derivation of the canonical exact $(-1)$-shifted symplectic structure gives rise, on inverting $\hbar$, to the canonical bundle with $\sD$-module structure twisted by $f$. This implies (Corollary \ref{DcritAutPervCor}) that the associated monodromic perverse sheaf (via  \cite{GunninghamSafronov} or \cite{SchefersDerivedVFiltrn})  is isomorphic to the vanishing cycles complex. The rigidity of exact self-dual quantisations guarantees compatibility of gluings, ensuring (Theorem \ref{cfHHRthm}) that for any polarised $(-1)$-shifted symplectic derived DM $\Cx$-stack with its canonical exact structure, the unique self-dual quantisation gives rise to the perverse sheaf of vanishing cycles from \cite{BBDJS}, or to its $\mu_2$-twisted generalisation from \cite{HHR1} in the absence of a polarisation. 

\smallskip
The structure of the paper is as follows.
Section \ref{legendre0sn} recalls the underlying features common to homotopical Legendre transformations  and their quantised analogues. We exploit that setup in Section \ref{exactsn} to establish a general Poisson analogue of exact shifted  symplectic structures (Proposition \ref{exactcompprop}), leading to the examples described above, with a quantised version in Corollary \ref{exactquantcor}. \S \ref{DWLsn1} gives applications to canonical quantisation of non-exact symplectic structures.

Section \ref{interpretnsn} shows how to interpret the resulting structures in algebraic terms, primarily focusing on the algebro-geometric setting. Lemmas \ref{epslemma}, \ref{epslemma2} and \ref{epslemmagenl} give deformation-theoretic interpretations of the governing differential graded Lie algebra, with Lemma \ref{epslemmaquant} giving a quantised analogue.  Examples \ref{quantinterpretnex} illustrate these in various quantisation contexts. 

Finally, Section \ref{vanishsn} addresses the case of $(-1)$-shifted symplectic structures in detail. We begin in  \S \ref{Dinvsn} by establishing existence of an essentially unique anti-involutive ring of twisted differential operators. By applying Corollary \ref{exactquantcor} to the resulting construction, in  \S \ref{invBVsn} we establish unique self-dual quantisations for exact $(-1)$-shifted symplectic structures, which lead to twisted $\sD\brh$-modules with $\hbar\pd_{\hbar}$-derivation. On the derived critical locus of a function $f$, Theorem \ref{DCritDmodthm} identifies the unique self-dual quantisation with a right  $\sD$-module given by   twisting the canonical bundle by $f$, which leads to the comparisons with vanishing cycles described above. 

I would like to thank Pavel Safronov for suggesting relevant references  and other helpful comments.

\tableofcontents

\section{Homotopical and quantised Legendre transformations}\label{legendre0sn}



\subsection{Maurer--Cartan spaces}

We briefly recall some essential background theory underpinning our constructions; further details can be found in the references given.

\begin{definition}\label{mcPLdef}
 Given a $\Q$-linear  differential graded Lie algebra (DGLA) $L$, define  the Maurer--Cartan set by 
\[
\mc(L):= \{\omega \in  L^{1}\ \,|\, \delta\omega + \half[\omega,\omega]=0 \in  \bigoplus_n L^{2}\}.
\]

Following \cite{hinstack}, define the Maurer--Cartan space $\mmc(L)$ (a simplicial set) of a nilpotent  DGLA $L$ by
\[
 \mmc(L)_n:= \mc(L\ten_{\Q} \Omega^{\bt}(\Delta^n)),
\]
where 
\[
\Omega^{\bt}(\Delta^n)=\Q[t_0, t_1, \ldots, t_n,\delta t_0, \delta t_1, \ldots, \delta t_n ]/(\sum t_i -1, \sum \delta t_i)
\]
is the commutative dg algebra of de Rham polynomial forms on the $n$-simplex, with the $t_i$ of degree $0$.
\end{definition}

\begin{definition}
Given an inverse system $L=\{L_{\alpha}\}_{\alpha}$ of nilpotent DGLAs, define
\[
 \mc(L):= \Lim_{\alpha} \mc(L_{\alpha}) \quad  \mmc(L):= \Lim_{\alpha} \mmc(L_{\alpha}).
\]
Note that  $\mc(L)= \mc(\Lim_{\alpha}L_{\alpha})$, but $\mmc(L)\ne \mmc(\Lim_{\alpha}L_{\alpha}) $. 
\end{definition}


\subsection{Essentials of the homotopical Legendre transformation}\label{legendresn}

We now summarise the essential features of the  homotopical Legendre transformation between shifted symplectic  and non-degenerate shifted Poisson structures from \cite[\ref{poisson-compsn}]{poisson}, 
 since adapted to many other settings, including \cite{DStein,NCpoisson}. See \cite{KhudaverdianVoronov} for a precursor.

 \subsubsection{Poisson structures}\label{genpoissstrsn}

 In any setting where shifted Poisson structures  are defined over a base $R$,  there is a DGLA $L$ of polyvectors (or multiderivations), satisfying the following.
 
 \begin{setting}\label{Lsetting}
  Let $L$ denote an $R$-linear DGLA $L$ admitting a product decomposition $L=\prod_{m \ge -1}\cW_mL$ by subcomplexes $\cW_mL$, with $[\cW_mL,\cW_nL]\subset \cW_{m+n}L$.
  \end{setting}
In the primary  motivating examples, $\cW_mL$ is a complex of shifted $(m+1)$-vectors. 

 \begin{definition}\label{Fdef}
  The weight decomposition $\cW$ is used to construct a complete decreasing  filtration $F$ by $F^pL:= \prod_{m \ge p-1}\cW_mL$, so $[F^iL,F^jL]\subset F^{i+j-1}L$ and $F^0L=L$.
 \end{definition}
 
 \begin{definition}\label{sigmadef}
  The weight decomposition $\cW$ also gives rise to a Lie algebra derivation $\sigma$ on $L$, given by multiplying $\cW_mL$ by $m$; this is automatically a chain map and respects the filtration $F$.
 
 In particular,  note that $\sigma$ acts as multiplication by $p-1$ on $\gr_F^pL$.
 \end{definition}


\begin{definition}
 The space of Poisson structures associated to $L$ is then taken to be the Maurer--Cartan space $\mmc(F^2L):= \Lim_n \mmc(F^2L/F^{2+n}L)$ of the pro-nilpotent DGLA $F^2L$.
\end{definition}

\subsubsection{Tangent Poisson structures and the derivation \tps{$\sigma$}{sigma}}
For a formal variable $\eps$ of degree $0$ with $\eps^2=0$, we have a morphism $\id + \sigma \eps \co L \to L[\eps]$ of filtered DGLAs, and hence a morphism  $\mmc(F^2L) \to \mmc(F^2L[\eps])$, which is a non-trivial section of the projection map given by $\eps \mapsto 0$. 
\begin{definition}
 We refer to the space $\mmc(F^2L[\eps])$ as being the tangent space of the space of Poisson structures associated to $L$.  
\end{definition}

\subsubsection{(Pre-)symplectic structures}
Shifted symplectic structures are much easier to define, being more linear in nature. 

\begin{setting}\label{dRsetting}
 Let $V$ denote a cochain complex which is complete with respect to a decreasing filtration $F$.
\end{setting}
In our motivating examples, $V$ is a shifted total de Rham complex and $F$ is the Hodge filtration given by brutal truncation in the de Rham direction. 


\begin{definition}
Regarding $V$ as a DGLA with trivial Lie bracket, the space of pre-symplectic structures associated to $V$ is then taken to be  the Maurer--Cartan space $\mmc(F^2V):=\Lim_n \mmc(F^2V/F^{2+n}V)$ of $F^2V$ regarded as a pro-object; this is weakly equivalent to the Dold--Kan denormalisation of the good truncation $\tau^{\le 0}V$. 
\end{definition}

The space of shifted symplectic structures is then a union of path components of $\mmc(F^2V)$ consisting of elements satisfying a non-degeneracy condition.

\subsubsection{The compatibility map}\label{gencompatsn}

The key to the homotopical Legendre transformation is then the existence of an $L_{\infty}$-derivation
\[
 \mu = \{\mu^{(i)}\}_{i \ge 1}  \co V \by F^2L \to L
\]
which is linear in $V$ in the sense that the only non-zero terms are $\mu^{(n+1)} \co V \ten (F^2L)^{\ten n} \to L[-n]$. These respect the 
 filtrations in the sense that 
 \[
 \mu^{(n+1)}(F^pV \ten (F^{2+j_1}L)\ten \ldots (F^{2+j_n}L)  ) \subset F^{p+j_1 +\dots +j_n}L.
\]

 
Moreover, for all $k$ there exists $n_0(k)$ such that  $\mu^{(n+1)}(F^0V \ten (F^2L)^{\ten n}) \subset F^kL$ for all $n \ge n_0(k)$. This ensures that on Maurer--Cartan elements $x$, the sum 
\begin{align*}
x 
\mapsto \sum_{i \ge 1} \mu^{(i)}(x,x, \ldots,x)/i!, \quad\text{i.e.}\quad
(\omega, \pi) 
\mapsto \sum_{n \ge 0} \mu^{(n+1)}(\omega,\pi,\pi, \ldots,\pi)/n! 
\end{align*}
is finite modulo $F^kL$, so converges. The same is true on tensoring with any commutative algebra, so this gives us a map
\begin{align*}
 (\pr_2 + \mu\eps) \co \mmc(F^2V) \by \mmc(F^2L) &\to  \mmc(F^2L \oplus F^2L\eps)\\
(\omega, \pi) &\mapsto  \pi + \eps\sum_{n \ge 0} \mu^{(n+1)}(\omega,\pi,\pi, \ldots,\pi)/n! 
 \end{align*}
of simplicial sets over $\mmc(F^2L)$ fibred in simplicial abelian groups,
which we call the compatibility map.

In practice, $\mu$ is defined on $1$-forms by contraction, then extended multiplicatively to the whole of $V$ using a ring structure on $V$ and a Poisson algebra structure on $L$. Thus $\mu^{(n+1)}$ picks out only the $n$-forms in $V$.

\subsubsection{The space of compatible pairs}

\begin{definition}\label{vanishingdef}
Given a simplicial set $Z$, an abelian group object $A$ in simplicial sets over $Z$, and a section $s \co Z \to A$, define the homotopy vanishing locus of $s$ to be the homotopy limit of the diagram
\[
\xymatrix@1{ Z \ar@<0.5ex>[r]^-{s}  \ar@<-0.5ex>[r]_-{0} & A \ar[r] & Z}.
\]
\end{definition}

We can write this as a homotopy fibre product $Z \by_{(s,0), A \by^h_Z A}^hA$, for the diagonal map $A \to A \by^h_Z A$. When $A$ is a trivial bundle $A = Z \by V$, for $V$ a simplicial abelian group, note that the homotopy vanishing locus is just the homotopy fibre of $s \co Z \to V$ over $0$.

\begin{definition}\label{compdef}
For $L,V,\sigma, \mu$ as above, define the space $\Comp(L,V,\sigma, \mu )$ of compatible  pairs to be the homotopy vanishing locus of
\begin{align*}
 (\id, \mu - \sigma) \co \mmc(F^2V) \by \mmc(F^2L) &\to \mmc(F^2V) \by \mmc(F^2L[\eps])\\
 (\omega, \pi) &\mapsto (\omega, \pi + (\mu(\omega, \pi) -\sigma(\pi))\eps)
\end{align*}
\end{definition}
%

In particular, an element of this space is given by a pre-symplectic structure $\omega$, a Poisson structure $\pi$, and a homotopy $h$ between $\mu(\omega,\pi)$ and $\sigma(\pi)$ in the fibre of $\mmc(F^2L[\eps]) \xra{\eps \mapsto 0} \mmc(F^2L)$ over $\pi$.
%

\subsubsection{The comparisons}\label{compequivsn}

There is a notion of  non-degeneracy for  shifted Poisson structures $\pi \in \mmc(F^2L)$, dependent only on the homotopy class. Write $\mmc(F^2L)^{\nondeg} \subset \mmc(F^2L)$ for the space of non-degenerate elements (a union of path components). 

When $\pi \in \mc(F^2L)$ is non-degenerate, the map 
\begin{align*}
\mu(-,\pi)\co V &\to  L^{\pi}:=(L, \delta + [\pi,-])\\
\omega &\mapsto \sum_{n \ge 0} \mu^{(n+1)}(\omega,\pi,\pi, \ldots,\pi)/n!
\end{align*}
is a filtered quasi-isomorphism.

If we also write $\Comp(L,V,\sigma, \mu )^{\nondeg}:= \Comp(L,V,\sigma, \mu )\by^h_{\mmc(F^2L)}\mmc(F^2L)^{\nondeg}$, the space of compatible pairs $(\omega,\pi)$ with $\pi$ non-degenerate, then it follows immediately from the filtered quasi-isomorphism that the map
\[
 \Comp(L,V,\sigma, \mu )^{\nondeg} \to \mmc(F^2L)^{\nondeg}
\]
is a weak equivalence.

We thus have a  map from non-degenerate Poisson to pre-symplectic structures,  given by the diagram
\[
 \mmc(F^2L)^{\nondeg} \xla{\sim} \Comp(L,V,\sigma, \mu )^{\nondeg} \to \mmc(F^2V).
\]

In fact (as in \cite[Corollary \ref{poisson-compatcor1}]{poisson}), it always turns out in cases of interest that the map  $\Comp(L,V,\sigma, \mu )^{\nondeg} \to \mmc(F^2V)^{\nondeg}$ to the space of shifted symplectic structures is also an equivalence. This comparison is more subtle: it starts from a duality between $\gr_F^2V$ ($2$-forms) and $\gr_F^2L$ (bivectors), and then relies on an obstruction theory argument working up the filtration $F$. The key calculation is that the maps 
\[
 \nu(\omega, \pi) - \sigma \co   \gr_F^pL \to \gr_F^pL
\]
turn out to be quasi-isomorphisms for $(\omega,\pi) \in \Comp(L/F^3L,V/F^3V,\sigma, \mu )^{\nondeg}$, where $\sigma$ is $(p-1)$ here, and 
\[
\nu(\omega, \pi)(b):= \sum_{n \ge 0} \mu^{(n+2)}(\omega,\pi,\pi, \ldots,\pi,b)/n!.
\]

\subsection{Deformation quantisations}\label{quantsn}


Where shifted Poisson structures over a base $R$ admit a notion of deformation quantisation, we have the following setup.

\begin{setting}\label{breveLsetting}
 Let $\breve{L}$ denote an $R$-linear DGLA equipped with an exhaustive increasing filtration $W$ with $W_{-2}\breve{L}=0$, 
 such that $[W_i\breve{L},W_j\breve{L}] \subset W_{i+j}\breve{L}$, together with a graded quasi-isomorphism $\gr^W\breve{L} \to L$ of DGLAs, for $L$ as in Setting \ref{Lsetting}. 
\end{setting}


In Examples \ref{quantex}, we will see that in cases of interest $\breve{L}$ tends to be a cohomological Hochschild complex or a ring of differential operators, with $W$ a reindexed filtration by Hochschild degree or by order.

\begin{definition}\label{tildeLdef}
We then form 
an $\hbar$-adically complete  $R\brh$-linear DGLA $\tilde{L}$ of quantised polyvectors as the complete Rees construction $\widehat{\Rees}(\breve{L},W):= \prod_m \hbar^mW_m\breve{L}$, and set $\tilde{\cW}_m\tilde{L}:= \hbar^mW_m\breve{L}$.
\end{definition}

The pair $(\tilde{L},\tilde{\cW})$ satisfies the conditions for $(L,\cW)$ in Setting \ref{Lsetting} (but not in general those of \S \ref{gencompatsn}). 
\begin{definition}
In particular, let  $\tilde{F}$ be the  complete decreasing filtration on $\tilde{L}$ given by Definition \ref{Fdef}, so  $\tilde{F}^p\tilde{L}= \prod_{m \ge p-1}\tilde{\cW}_m\tilde{L}= \prod_{m \ge p-1} \hbar^mW_m\breve{L} $.

Similarly, let $\tilde{\sigma}$ be the derivation on $\tilde{L}$ given by Definition \ref{sigmadef}, so  $\tilde{\sigma}=\hbar\pd_{\hbar}$. 
\end{definition}

The graded quasi-isomorphism $\gr^W\breve{L} \to L$ is the same as a quasi-isomorphism $\tilde{L}/\hbar\tilde{L} \to L$, where  the induced grading $\cW$ on $\tilde{L}/\hbar\tilde{L}$ is given by $\cW_m(\tilde{L}/\hbar\tilde{L}):= \tilde{\cW}_m\tilde{L}/\hbar \tilde{\cW}_{m-1}\tilde{L} = \hbar^m\gr^W_m\breve{L}$. We thus have a filtered quasi-isomorphism $ \tilde{L}/\hbar \tilde{L} \to L$, equivariant with respect to the respective derivations $\tilde{\sigma}$ and $\sigma$.

 \begin{definition}
The space of quantised Poisson structures associated to $(\breve{L},W)$ is then taken to be the Maurer--Cartan space $\mmc(\tilde{F}^2\tilde{L}):= \Lim_n \mmc(\tilde{F}^2\tilde{L}/\tilde{F}^{2+n}\tilde{L})$ of the pro-nilpotent DGLA $\tilde{F}^2\tilde{L}$. 

The map from the space of quantisations to the space of underlying Poisson structures is the the natural composite $\mmc(\tilde{F}^2\tilde{L}) \to (\tilde{F}^2\tilde{L}/\hbar\tilde{F}^1\tilde{L})\simeq \mmc(F^2L)$.
\end{definition}

\begin{remark*}
Beware that our convention for $\tilde{\sigma}$ differs slightly from that of other papers by the author, where there is a derivation $\sigma$ defined from $\tilde{L}$ to $\hbar\tilde{L}$ (corresponding to our $\hbar\tilde{\sigma}$), rather than to $\tilde{L}$ itself,  in order to simplify expressions for the quantised compatibility map. 
\end{remark*}

The key to quantisation arguments is existence of the following.
\begin{setting}\label{isetting}
 Assume that our filtered DGLA $(\breve{L},W)$ is equipped with an involution $i$, acting as (or at least homotopic to) multiplication by $(-1)^m$ on $\gr^W_m\breve{L}$.
\end{setting}

\begin{definition}
 Given an involution $i$ as in Setting \ref{isetting}, define an involution $*$ of $\tilde{L}=\widehat{\Rees}(\breve{L},W)$
 by setting $v(\hbar)^*:= i(v(-\hbar))$.
 \end{definition}

In particular, $*$ acts as (or is at least homotopic to) the identity on $\tilde{L}/\hbar\tilde{L}$, respects the filtration $\tilde{F}$ on $\tilde{L}$  and satisfies $(\hbar v)^* = -\hbar v^*$ and $\tilde{\sigma}(v^*)=\tilde{\sigma}(v)^*$.  

The DGLA $\tilde{L}$  thus splits  as a direct sum of a sub-DGLA $\tilde{L}^{*=\id}$ and a subcomplex $\tilde{L}^{*=-\id}$.

\begin{lemma}
 
 The inclusion $\hbar^2 (\tilde{F}^{p-2}\tilde{L}^{*=\id}) \subset \tilde{F}^p\tilde{L}^{*=\id} \cap \hbar\tilde{L}$ is a quasi-isomorphism.
 \end{lemma}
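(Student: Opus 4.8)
The plan is to compute the cokernel of the inclusion explicitly and then to observe that the involution $*$ acts on that cokernel, up to homotopy, as $-\id$, so that its $*=\id$-part is acyclic; the lemma then follows by exactness of the eigenspace decomposition.

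First I would unwind the Rees bookkeeping. From $\tilde{F}^q\tilde{L} = \prod_{m \ge q-1}\hbar^m W_m\breve{L}$ together with $\hbar\tilde{L} = \prod_{m} \hbar^m W_{m-1}\breve{L}$ (seen inside $\tilde{L}$ via $W_{m-1}\breve{L}\subseteq W_m\breve{L}$), one reads off
\[
 \tilde{F}^p\tilde{L}\cap\hbar\tilde{L} = \prod_{m \ge p-1}\hbar^m W_{m-1}\breve{L}, \qquad \hbar^2\tilde{F}^{p-2}\tilde{L} = \prod_{m \ge p-1}\hbar^m W_{m-2}\breve{L},
\]
so the inclusion $\hbar^2\tilde{F}^{p-2}\tilde{L}\subseteq\tilde{F}^p\tilde{L}\cap\hbar\tilde{L}$ has cokernel $C := \prod_{m\ge p-1}\hbar^m\,\gr^W_{m-1}\breve{L}$. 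All three complexes are $*$-stable (the involution preserves $\tilde{F}$, and sends $\hbar\tilde{L}$ and $\hbar^2\tilde{L}$ to themselves because $(\hbar v)^*=-\hbar v^*$), so $C$ carries an induced involution, block-diagonal for the $\hbar$-grading: on the block $\hbar^m\gr^W_{m-1}\breve{L}$ it is the scalar $(-1)^m$ (from $v(\hbar)\mapsto v(-\hbar)$) times the action of $i$ on $\gr^W_{m-1}\breve{L}$, which by Setting \ref{isetting} is homotopic to multiplication by $(-1)^{m-1}$. Hence $* \simeq (-1)^{2m-1}\,\id = -\id$ on each block, and therefore $*\simeq -\id$ on all of $C$.

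Next, since $\Q\subseteq R$ the functor $N\mapsto N^{*=\id}$ — the image of the idempotent cochain map $\half(\id+*)$ — is exact, so I would apply it to the short exact sequence $0 \to \hbar^2\tilde{F}^{p-2}\tilde{L} \to \tilde{F}^p\tilde{L}\cap\hbar\tilde{L} \to C \to 0$. Using injectivity of $\hbar$ on $\tilde{L}$ and the relation $(\hbar^2 v)^* = \hbar^2 v^*$ to identify the first two resulting terms, this yields a short exact sequence
\[
 0 \to \hbar^2(\tilde{F}^{p-2}\tilde{L}^{*=\id}) \to \tilde{F}^p\tilde{L}^{*=\id}\cap\hbar\tilde{L} \to C^{*=\id} \to 0 .
\]
It then remains to see that $C^{*=\id}$ is acyclic. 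But $*\simeq -\id$ on $C$ makes the idempotent $\half(\id+*)$ null-homotopic, and the image of a null-homotopic idempotent cochain map is contractible; so $C^{*=\id}$ is acyclic, and the first arrow above — the inclusion of the lemma — is a quasi-isomorphism.

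The one genuinely delicate point is the sign count in the second step: passing from the $W$-grading of $\breve{L}$ to the $\hbar$-adic grading of $\tilde{L}$ places $\gr^W_{m-1}\breve{L}$ in $\hbar$-weight $m$, so the sign $(-1)^m$ from $\hbar\mapsto -\hbar$ and the sign $(-1)^{m-1}$ from $i$ combine to $-1$ rather than to $+1$; this off-by-one is exactly why it is the self-dual ($*=\id$) part of the cokernel — not the anti-self-dual part — that gets trivialised, which is the substance of the statement. Everything else (the Rees identities, the exactness of $(-)^{*=\id}$, and the homotopy $i\simeq(-1)^m$ on $\gr^W_m\breve{L}$ supplied by Setting \ref{isetting}) is routine.
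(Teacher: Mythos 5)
Your proof is correct and is essentially the paper's own argument: both rest on the Rees identity $\tilde{F}^q\tilde{L}\cap\hbar\tilde{L}=\hbar\tilde{F}^{q-1}\tilde{L}$ together with the fact that $*$ is homotopic to $\pm\id$ on the relevant associated graded pieces, so that the complementary eigenspace is acyclic. The paper merely packages this by observing that $\tilde{F}^{p-1}\tilde{L}^{*=-\id}\cap\hbar\tilde{L}\into\tilde{F}^{p-1}\tilde{L}^{*=-\id}$ is a quasi-isomorphism (since $*\simeq\id$ on $\tilde{L}/\hbar\tilde{L}\simeq L$) and then multiplying by $\hbar$, which swaps eigenspaces; that is exactly your cokernel computation transported by $\hbar$.
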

\begin{proof} For all $q$, we have
 \begin{align*}
  \tilde{F}^q\tilde{L} \cap \hbar\tilde{L}&=(\prod_{m \ge q-1} \hbar^mW_m\breve{L})\cap (\prod_{r} \hbar^{r+1}W_r\breve{L})\\
  &=\prod_{m \ge q-1} \hbar^mW_{m-1}\breve{L}  = \hbar \tilde{F}^{q-1}\tilde{L}. 
\end{align*}

The inclusion $\tilde{F}^q\tilde{L}^{*=-\id} \cap \hbar\tilde{L} \into \tilde{F}^q\tilde{L}^{*=-\id}$ is a quasi-isomorphism for all $q$, since $*$ is   homotopic to the identity on $\tilde{F}^q( \tilde{L}/\hbar\tilde{L}) \simeq F^qL$.
We thus have a quasi-isomorphism
\[
\hbar^2 (\tilde{F}^{p-2}\tilde{L}^{*=\id})= \hbar( \tilde{F}^{p-1}\tilde{L}^{*=-\id} \cap \hbar\tilde{L}) \into \hbar (\tilde{F}^{p-1}\tilde{L}^{*-=\id})=\tilde{F}^p\tilde{L} \cap \hbar\tilde{L}. \qedhere
\]
\end{proof}

\begin{definition}
The space of self-dual  quantised Poisson structures associated to $(\breve{L},W,i)$ is then taken to be the Maurer--Cartan space $\mmc(\tilde{F}^2\tilde{L})^{sd}:= \mmc(\tilde{F}^2\tilde{L}^{*=\id})$.
\end{definition}

\begin{remark}\label{Lbrhrmk}
One example of a DGLA satisfying the conditions for $\breve{L}$ is $L$ itself, with weight filtration $W_mL:=\bigoplus_{j \le m} \cW_jL$. Then $\widehat{\Rees}(L,W) = \prod_m \hbar^m\cW_m\brh$, with $\tilde{\sigma}= \hbar\pd_{\hbar}$. We can take the involution $i$ of $L$ to be multiplication by $(-1)^j$ on $\cW_jL$, giving an involution $*$ on $\widehat{\Rees}(L,W)$ as $(v\hbar^n)^*= (-1)^{m+n}v\hbar^n$ for $v \in \cW_mL$. 

In fact, there is an isomorphism $(-)_{\hbar} \co L\brh \to \widehat{\Rees}(L,W) $ given by multiplying $\cW_mL$ by $\hbar^m$ (so an $(m+1)$-vector is multiplied by $\hbar^m$). Under this automorphism, the weight decomposition $\tilde{\cW}_m \widehat{\Rees}(L,W)$ corresponds to $ \prod_{i \ge 0} \hbar^i\cW_{m-i}L \subset L\brh$, so $\tilde{F}^p\widehat{\Rees}(L,W)$ corresponds to   $\prod_{i \ge 0} \hbar^i F^{p-i}L \subset L\brh$.

The derivation $\tilde{\sigma}$ on $\widehat{\Rees}(L,W)$ corresponds to the endomorphism  $\sum_n v_n \hbar^n \mapsto \sum_n (nv_n + \sigma(v_n)) \hbar^n$ of $L\brh$, and  the involution $*$ on $\widehat{\Rees}(L,W)$ simply corresponds to  the involution $(\sum_n v_n \hbar^n)^*:=(\sum_n v_n (-\hbar)^n)  $ of $L\brh$. In particular, $(-)_{\hbar}$ then gives an isomorphism $F^2L \by L\brhh \to \widehat{\Rees}(L,W)^{*=\id}$.
 \end{remark}
 
 \begin{remark}
 Formality results tend to be filtered quasi-isomorphisms $(\breve{L},W) \simeq (L,W)$ of DGLAs, but we will not need formality arguments to establish  quantisation for exact structures.
 \end{remark}

\subsubsection{Quantised homotopical Legendre transformations}\label{qcompequivsn}

Self-dual deformation quantisations of non-degenerate Poisson structures can often be parametrised in terms of power series in de Rham cohomology, via a quantised version of the Legendre transformation. The key is the existence of an $L_{\infty}$-derivation
\[
 \tilde{\mu} = \{\tilde{\mu}^{(i)}\}_{i \ge 1}  \co V\brhh \by (\tilde{F}^2\tilde{L})^{*=\id} \to \tilde{L}^{*=\id}
\]
which is linear in $V\brhh$ and preserves the respective filtrations $\tilde{F}$, with $\tilde{F}^p(V\brh) := \prod_{i \ge 0} \hbar^i F^{p-i}V$ similarly to  Remark \ref{Lbrhrmk} and $ \tilde{F}^p(V\brhh)= V\brhh \cap\tilde{F}^p(V\brh)$. For non-negative shifts, $\tilde{\mu}$ tends to  depend on a choice of even  associator.

The method of \S \ref{compequivsn} can then  be applied to 
the data $(\tilde{L}^{*=\id},V\brhh,\tilde{\sigma},\tilde{\mu})$, with the restriction to self-dual data being crucial in ensuring that  $\tilde{\nu}(\omega, \pi) - \tilde{\sigma}$ is a quasi-isomorphism on $\gr_{\tilde{F}}\tilde{L}^{*=\id}$.
We then have weak equivalences
\[
 \mmc(\tilde{F}^2(V\brhh)) \la  \Comp(\tilde{L}^{*=\id},V\brhh,\tilde{\sigma},\tilde{\mu})^{\nondeg} \to \mmc(\tilde{F}^2\tilde{L}^{*=\id})^{\nondeg},
\]
where $\tilde{F}^2(V\brhh)= \tilde{F}^2(V\brh)\cap V\brhh = F^2V \by \hbar^2V\brhh$ and we write $(-)^{\nondeg}$ for $(-)\by^h_{\mmc(F^2L)}\mmc(F^2L)^{\nondeg}$. On the left, this parametrisation corresponds to a shifted symplectic structure together with an $\hbar^2$-power series in de Rham cohomology, while on the right it corresponds to a self-dual deformation quantisation of a non-degenerate shifted Poisson structure. 

\section{Comparisons for exact symplectic structures}\label{exactsn} 
We now consider Poisson counterparts of exact shifted symplectic structures.

\subsection{Maurer--Cartan characterisation}

\subsubsection{Exact symplectic structures}

 In the setup of \S \ref{legendresn}, a shifted pre-symplectic structure $\omega \in \mc(F^2V)$ is exact if its image in $\mmc(V)$ is in the path component of $0$. We will consider exactness as a structure rather than a property, as follows.
 
 \begin{definition}
Define 
the space of  exact shifted pre-symplectic structures associated to $(V,F)$ as in Setting \ref{dRsetting} to be the homotopy fibre of $\mmc(F^2V) \to \mmc(V)$ over $0$.

We usually take this to be the  realisation $\mmc(\cocone(F^2V \to V))$, but the space $\mmc((V/F^2V)[-1])$ is equivalent.
 \end{definition}

The space $\mmc(\cocone(F^2V \to V))^{\nondeg}$ then consists of elements whose images in $\mmc(F^2V)$ are symplectic; this is a union of path components.

\subsubsection{Poisson counterparts}

Take $(L,F,\sigma)$ as in \S \ref{genpoissstrsn}.

\begin{definition}\label{coconesigmadef}
For $\eps^2=0$, define a DGLA structure on the cochain complex $\cocone(\sigma\eps \co F^2L \to L\eps)$ by setting $[a+b\eps,a'+b\eps]:= [a,b]+[a,b']\eps +[a',b]\eps$. We will refer to its Maurer--Cartan space $\mmc$ as the space of Poisson structures with formal derivation (terminology to be justified in  \S \ref{interpretnsn}). 

Then define $\mmc(\cocone(\sigma\eps \co F^2L \to L\eps))^{\nondeg}$ to be 
\[
\mmc(\cocone(\sigma\eps \co F^2L \to L\eps))\by^h_{\mmc(F^2L)}\mmc(F^2L)^{\nondeg},
\]
the locus of points in $\mmc(\cocone(\sigma\eps \co F^2L \to L\eps))$
consisting of elements whose images in $\mmc(F^2L)$ are non-degenerate Poisson structures.
\end{definition}

The following implies that a Poisson structure $\pi$ admits  formal derivations if and only if 
$[\sigma(\pi)]= 0 \in \H^1(L^{\pi})$
, and that the  $i$th homotopy group of the space of choices is then an $\H^{-i}L^{\pi}$%
-torsor.  
\begin{lemma}\label{PDfibrelemma}
The homotopy fibre of $\mmc\left(\cocone(\sigma\eps \co F^2L \to L\eps)\right) \to \mmc(F^2L)$ over an element $\pi \in \mc(L)$ is weakly equivalent to the fibre of
\[
 N^{-1}(\ldots \xra{\delta_{\pi}} L^{-2}   \xra{\delta_{\pi}} L^{-1} \xra{\delta_{\pi}} L^0) \xra{\delta_{\pi}} L^1
\]
over $-\sigma(\pi)$, where $N^{-1}$ denotes Dold--Kan denormalisation and $\delta_{\pi}:= \delta + [\pi,-]$. 
 \end{lemma}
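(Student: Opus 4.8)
The plan is to compute the homotopy fibre by identifying the relevant simplicial sets via Maurer--Cartan theory for DGLAs. First I would observe that the cochain complex $M := \cocone(\sigma\eps \co F^2L \to L\eps)$ sits in a short exact sequence of complexes $0 \to L\eps[-1] \to M \to F^2L \to 0$, and that this is compatible with the DGLA structures: $L\eps[-1]$ is an abelian ideal in $M$ (using that $\eps^2 = 0$, the bracket $[b\eps, b'\eps]$ vanishes), so the projection $M \to F^2L$ is a surjection of DGLAs with abelian kernel. Applying $\mmc$ and using that $\mmc$ sends such fibrations of (pro-)nilpotent DGLAs to fibrations of simplicial sets, the homotopy fibre over $\pi \in \mc(F^2L)$ is computed by the ``twisted'' abelian kernel. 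Concretely, an element of $\mmc(M)_n$ lying over a fixed MC element $\pi$ of $F^2L$ is a $1$-cochain of the form $\pi + c\eps$ with $c \in (L\ten \Omega^\bullet(\Delta^n))^0$ (degree shift from $[-1]$), and the MC equation for $M$ expands, using the formula $[a + b\eps, a' + b'\eps] = [a,a'] + ([a,b'] + [a',b])\eps$, to: the $\eps^0$-part is just the MC equation for $\pi$ (automatically satisfied), while the $\eps^1$-part reads $\delta c + [\pi, c] = -\sigma(\pi)$, i.e. $\delta_\pi c = -\sigma(\pi)$, where I must be careful that $\sigma(\pi)$ is the constant (in the $\Omega^\bullet(\Delta^n)$ direction) element coming from the cocone differential $\sigma\eps$ evaluated on $\pi$. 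Here I should note $\sigma(\pi) \in L^1$ is indeed a cocycle in $L^\pi$ — this follows since $\sigma$ is a chain map commuting with $\delta$ and $\delta_\pi \sigma(\pi) = \delta\sigma(\pi) + [\pi,\sigma(\pi)] = \sigma(\delta\pi) + \tfrac12\sigma[\pi,\pi] - [\sigma\pi,\pi]$; using $\delta\pi + \tfrac12[\pi,\pi]=0$ and $\sigma$ a derivation, $\sigma[\pi,\pi] = 2[\sigma\pi,\pi]$, so this is $0$.

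Next I would identify this fibre explicitly. The space of such $c$, as $n$ varies, is precisely the fibre over $-\sigma(\pi)$ of the map of simplicial sets
\[
 N^{-1}(\tau^{\le 0}(L^\pi)) \xra{\delta_\pi} N^{-1}(L^1[-1]),
\]
which after unwinding Dold--Kan is the fibre of $N^{-1}(\ldots \xra{\delta_\pi} L^{-2} \xra{\delta_\pi} L^{-1} \xra{\delta_\pi} L^0) \xra{\delta_\pi} L^1$ over $-\sigma(\pi)$, exactly as in the statement. The content here is the standard fact that for an abelian DGLA $\fA$ (zero bracket), $\mmc(\fA)$ is the Dold--Kan denormalisation of $\tau^{\le 0}\fA$; I would apply this to the kernel complex twisted by $\pi$, namely $(L\eps[-1], \delta_\pi)$, whose MC space over the basepoint is an abelian group object, and whose value is $N^{-1}\tau^{\le 0}(L^\pi[-1])$ — except that the cocone differential $\sigma\eps$ shifts the ``origin'' by $-\sigma(\pi)$, producing the asserted fibre (a torsor under that abelian group when non-empty). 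One small technical point is the pro-nilpotent bookkeeping: everything should be done at the level of $L/F^{2+k}L$ and then passed to the limit, using that $\sigma$ preserves $F$ and the filtration is complete, so the fibration and fibre statements hold compatibly and pass to $\Lim_k$.

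The main obstacle I anticipate is not conceptual but bookkeeping: getting the degree shifts and signs right in the cocone construction so that the affine origin shift lands on $-\sigma(\pi)$ rather than $+\sigma(\pi)$ or $-\sigma(\pi)$ in the wrong degree, and verifying carefully that $\mmc$ of the surjection $M \to F^2L$ is genuinely a Kan fibration with the expected fibre — this requires either citing the smooth/fibration properties of $\mmc$ for nilpotent DGLAs from \cite{hinstack} (or the standard obstruction-theoretic argument that surjections of nilpotent DGLAs induce fibrations on $\mmc$) and then checking that the fibre, as a simplicial set with its abelian-torsor structure, is what one computes naively from the kernel. Once the fibration statement is in hand, the identification of the fibre with the displayed truncated-complex fibre is a direct unravelling of Dold--Kan, so I would keep that part brief.
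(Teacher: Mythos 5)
Your argument is correct and takes essentially the same route as the paper: both reduce the homotopy fibre to the strict fibre using that $\mmc$ of the surjection with abelian pro-nilpotent kernel is a Kan fibration, extract from the $\eps$-component of the Maurer--Cartan equation the condition $\delta_{\pi}c=-\sigma(\pi)$ for $c\in (L\hten\Omega^{\bt}(\Delta^n))^0$, and identify the resulting simplicial set via Dold--Kan for abelian DGLAs. The only nitpick is notational: the intermediate display should use the brutal truncation $L^{\le 0}$ with differential $\delta_{\pi}$ rather than the good truncation $\tau^{\le 0}(L^{\pi})$ (whose $N^{-1}$ would already impose the cocycle condition in degree $0$), which is what your final identification in fact does.
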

\begin{proof}
 Since $\mmc$ sends central extensions to Kan fibrations, it suffices to take the fibre, which is the simplicial set given in level $n$ by 
 \[
 \{ D \in (L\hten \Omega^{\bt}(\Delta^n))^0 ~:~ \delta D + [\pi,D] + \sigma(\pi)=0\}.
 \]
 If we decompose $D$ as $D^0 + D^{<0}$, for $D^0 \in  (L^0\hten \Omega^0(\Delta^n))$ and $D^{<0} \in  (L^{<0}\hten \Omega^{>0}(\Delta^n))$, we must have $\delta_{\pi}D^{<0}=0$ and $\delta_{\pi}D^0=-\sigma(\pi) \in L^1$.
  In other words, this is the fibre of 
 \[
  \mmc((L^{\le 0}\eps, \delta_{\pi})[1]) \xra{\delta_{\pi}} \mmc(L^1\eps[1])= L^1
 \]
over $-\sigma(\pi)$. Since $\mmc(V[1])$ is weakly equivalent to $N^{-1}V$ for abelian filtered DGLAs $V$, the result follows. 
\end{proof}

\begin{proposition}\label{exactcompprop}
In the setting of \S \ref{compequivsn}, we have  a natural  weak equivalence
 \[
  \mmc(\cocone(F^2V \to V))^{\nondeg} \simeq \mmc(\cocone(\sigma\eps \co F^2L \to L\eps))^{\nondeg}
 \]
of simplicial sets between the space of exact symplectic structures and the space of Poisson structures with formal derivation.
 \end{proposition}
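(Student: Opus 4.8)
The plan is to mimic the structure of the non-exact comparison from \S\ref{compequivsn}, replacing $\mmc(F^2V)$ and $\mmc(F^2L)$ by their exact/formal-derivation counterparts. First I would set up an ``exact compatibility space'' $\Comp^{\ex}(L,V,\sigma,\mu)$ as the homotopy vanishing locus of a map out of $\mmc(\cocone(F^2V \to V)) \by \mmc(\cocone(\sigma\eps\co F^2L \to L\eps))$; concretely, a point consists of an exact pre-symplectic structure $(\omega,h_\omega)$ (so $h_\omega$ a homotopy killing the image of $\omega$ in $\mmc(V)$), a Poisson structure with formal derivation $(\pi,D)$ (so $D$ a homotopy with $\delta_\pi D = -\sigma(\pi)$), together with the compatibility homotopy from Definition \ref{compdef} relating $\mu(\omega,\pi)$ and $\sigma(\pi)$, and crucially a compatibility between $h_\omega$ and $D$ under the map $\mu(-,\pi)$. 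The cleanest way to organise this is to use the $L_\infty$-derivation $\mu$ to produce a map of cocones: $\mu(-,\pi)$ carries $\cocone(F^2V \to V)$ to $\cocone(F^2L \to L)$, and one identifies $\cocone(\sigma\eps\co F^2L \to L\eps)$ with $\cocone(F^2L \to L)$ via the section $\id+\sigma\eps$ together with the projection $\eps\mapsto 1$ (this is where the derivation $\sigma$ enters: $D$ records exactly the homotopy needed to correct $\sigma(\pi)$ to zero, matching the cone differential).

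The core of the argument is then the two weak equivalences
\[
 \mmc(\cocone(F^2V \to V))^{\nondeg} \xla{\sim} \Comp^{\ex}(L,V,\sigma,\mu)^{\nondeg} \xra{\sim} \mmc(\cocone(\sigma\eps\co F^2L \to L\eps))^{\nondeg},
\]
after which the proposition follows by composition. For the right-hand map, I would argue exactly as in \S\ref{compequivsn}: when $\pi$ is non-degenerate, $\mu(-,\pi)\co V \to L^\pi$ is a filtered quasi-isomorphism, hence so is the induced map on cocones $\cocone(F^2V \to V) \to \cocone(F^2L \to L)^\pi$; this kills the ``extra'' data (the compatibility homotopy and the identification of $h_\omega$ with $D$) up to contractible choice, so projecting to $\mmc(\cocone(\sigma\eps))$ is an equivalence on non-degenerate loci. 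Lemma \ref{PDfibrelemma} is the fibrewise incarnation of this: it already tells us the fibre of $\mmc(\cocone(\sigma\eps)) \to \mmc(F^2L)$ over $\pi$ is governed by $L^\pi$ in the truncated range, which matches the fibre of the exact pre-symplectic space over $\omega$ (governed by $V \simeq L^\pi$) once non-degeneracy is imposed. For the left-hand map I would invoke the obstruction-theory / $\gr_F$ argument of \S\ref{compequivsn} verbatim: the key input is that $\nu(\omega,\pi) - \sigma$ is a quasi-isomorphism on each $\gr_F^pL$, and that statement is about the underlying (non-exact) data, so it transports to the cocone level since cocones of filtered quasi-isomorphisms are filtered quasi-isomorphisms. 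One must check compatibility of the non-degeneracy loci on both sides, but both are pulled back from $\mmc(F^2L)^{\nondeg}$ along compatible maps, so this is automatic.

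The main obstacle I anticipate is bookkeeping around the two distinct ``$\eps$'' (and cone) directions so that the non-exact comparison of \S\ref{compequivsn} can genuinely be applied as a black box rather than re-proved. Specifically: the compatibility map $(\pr_2 + \mu\eps)$ lives over $\mmc(F^2L)$ and its target is fibred in simplicial abelian groups; I need to upgrade this to a statement over $\mmc(\cocone(\sigma\eps))$, keeping track of how $\sigma$ interacts with the cone differential and with $\mu$ (the identity $\mu$ being a $\sigma$-related derivation, so that $\nu - \sigma$ is the relevant operator). The honest way to do this is to exhibit $\Comp^{\ex}$ as a homotopy pullback of $\Comp(L,V,\sigma,\mu)$ along the squares expressing ``exactness of $\omega$'' and ``$D$ exists'', and then the two equivalences reduce formally to the known ones together with the fact that homotopy pullbacks preserve weak equivalences. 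Once that square is drawn correctly, the proof is short; getting the square right — in particular checking that the bottom map of the exactness square on the $V$-side corresponds under $\mu(-,\pi)$ to the bottom map of the formal-derivation square on the $L$-side — is the real content.
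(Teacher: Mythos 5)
Your proposal follows essentially the same route as the paper: the paper forms $D:=\Comp(L,V,\sigma,\mu)^{\nondeg}\by^h_{\mmc(F^2V)}\mmc(\cocone(F^2V\to V))$ (your $\Comp^{\ex}$, except that the formal derivation is not carried as extra data but defined outright as $\mu(g,\pi)-h$ from the exactness homotopy $g$ and the compatibility homotopy $h$), the projection to exact pre-symplectic structures is an equivalence formally from the assumed \S \ref{compequivsn} equivalence, and the resulting map to $\mmc(\cocone(\sigma\eps \co F^2L \to L\eps))^{\nondeg}$ is shown to be an equivalence fibrewise over $\pi$ by induction up the Hodge filtration using the filtered quasi-isomorphism $\mu(-,\pi)\co V \to L^{\pi}$ -- exactly the bookkeeping you anticipate. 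The one slip, which is not load-bearing, is the claimed identification of $\cocone(\sigma\eps \co F^2L \to L\eps)$ with $\cocone(F^2L \to L)$ via ``$\eps\mapsto 1$'': no such identification exists (the fibre over $\pi$ is the torsor of nullhomotopies of $\sigma(\pi)$ in $L^{\pi}$, as in Lemma \ref{PDfibrelemma}), but your actual mechanism of transporting the exactness homotopy through $\mu(-,\pi)$ and correcting by the compatibility homotopy is precisely what the paper does.
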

\begin{proof}
The equivalence $\Comp(L,V,\sigma, \mu )^{\nondeg} \to \mmc(F^2V)^{\nondeg}$ assumed in \S \ref{compequivsn} 
implies that the projection from
\[
 D:=\Comp(L,V,\sigma, \mu )^{\nondeg}\by^h_{\mmc(F^2V)} \mmc(\cocone(F^2V \to V))
\]
to  $\mmc(\cocone(F^2V \to V))^{\nondeg}$ is an equivalence,
so it suffices to construct a weak equivalence from $D$ to $\mmc(\cocone(\sigma\eps \co F^2L \to L\eps))^{\nondeg}$.

Since the morphisms $\cocone(F^2V \to V)/F^p \to F^2V/F^p$ are surjections of abelian DGLAs, applying $\mmc$ gives a Kan fibration, so we can replace the homotopy fibre product in $D$ with a fibre product. 

An element of $D_m$ then consists of data $(\omega, \pi, h, g)$ for $\omega \in \mmc(F^2V)_m$, $\pi \in \mmc(F^2L)_m$, $h$ a homotopy between $\mu(\omega,\pi)$ and $\sigma(\pi)$ in $\Lim_n (F^2(L^{\pi}/F^n)\ten \Omega^{\bt}(\Delta^m))$, and a homotopy $g$ between $\omega$ and $0$ in $\Lim_n ((V/F^n)\ten \Omega^{\bt}(\Delta^m)) $. 

Applying the chain map $\mu(-,\pi) \co V \to L^{\pi}$ then gives us a homotopy $\mu(g,\pi)$ between $\mu(\omega,\pi)$ and $0$, and hence a homotopy $\mu(g,\pi)-h$ between $\sigma(\pi)$ and $0$ in $\Lim_n ((L^{\pi}/F^n)\ten \Omega^{\bt}(\Delta^m))$. We have thus constructed an element $\pi +(\mu(g,\pi)-h)\eps$ of   $\mmc(\cocone(\sigma\eps \co F^2L \to L\eps))_m$, sufficiently naturally to give us a map
\[
\theta \co  D \to \mmc(\cocone(\sigma\eps \co F^2L \to L\eps))
\]
of simplicial sets.

It now suffices to show that this map induces weak equivalences of homotopy fibres over any element $\pi \in \mc(F^2L)$ for which the compatibility map $\mu(-,\pi) \co V \to L^{\pi}$ is a filtered quasi-isomorphism. To do this, we use the Hodge filtration on $V$, proving inductively that the maps
\begin{align*}
D(n) := &\Comp(L,V,\sigma, \mu )^{\nondeg}\by^h_{\mmc(F^2V)} \mmc(\cocone(F^2V \to (V/F^nV)))\\ 
&\xra{\theta(n)} \mmc(\cocone(\sigma\eps \co F^2L \to (L/F^nL)\eps))^{\nondeg}
\end{align*}
are weak equivalences, and passing to the homotopy limit.

The base case $n=0$ holds by hypothesis (\S \ref{compequivsn}), and  obstruction theory for central extensions of DGLAs (see for instance \cite[Appendix \ref{DQDG-towersn}]{DQDG}) gives homotopy fibre sequences
\begin{align*}
 &\mmc\left(F^2V \to (V/F^{n+1}V)\right) \to \mmc\left(F^2V \to (V/F^nV)\right) \to \mmc\left(\gr_F^nV\right), \text{ and}\\
  &\mmc\left(\sigma\eps \co F^2L \to (L/F^{n+1}L)\eps\right)_{\pi} \to  \mmc\left(\sigma\eps \co F^2L \to (L/F^nL)\eps\right)_{\pi} \to \mmc\left(\gr_F^nL^{\pi}\eps\right),
\end{align*}
where $(-)_{\pi}$ denotes homotopy fibre product over $\pi$, and $\mmc(L \to M)$ is shorthand for $\mmc(\cocone(L \to M))$. Since $\mu(-,\pi) \co \mmc(\gr_F^nV) \to \mmc(\gr_F^nL^{\pi}\eps)$ is a weak equivalence (the homotopy groups being $\H^{1-i}(\gr_F^nV)$ and $\H^{1-i}(\gr_F^nL^{\pi})$ respectively), it follows that $\theta(n+1)$ is a  weak equivalence whenever $\theta(n)$ is so. 
Thus  $\theta$ gives a  weak equivalence between the non-degenerate loci. 
\end{proof}

\begin{examples}\label{poissonex} 
Specific settings to which Proposition \ref{exactcompprop} applies are the following.
\begin{enumerate}[wide, labelindent=0pt, itemsep=6pt]
 \item\label{poissonexalg} 
 When  $A$ is an $R$-CDGA, or more generally an $R$-algebra in double complexes\footnote{This is called a stacky CDGA in \cite{poisson} and a graded mixed cdga in \cite{CPTVV}, though the differentials are not mixed.}, which is cofibrant\footnote{It suffices for $A$ to be  a filtered colimit of objects which are quasi-smooth in the original sense of \cite[Lecture 27]{Kon}.},
 we can take $L$ to be the complete filtered DGLA $\widehat{\Pol}(A,n)[n+1]$ of shifted multiderivations as in \cite[Definition \ref{poisson-bipoldef}]{poisson}, given by
 \[
 \prod_{j \ge 0} \hatHHom_A(\Co\Symm_A^j((\Omega^1_{A/R})_{[-n-1]}),A). 
\]
setting $\cW_mL:=\hatHHom_A(\Co\Symm_A^{m+1}((\Omega^1_{A/R})_{[-n-1]}),A)$. This definition extends to  strings of morphisms as in \cite[\S\ref{poisson-Artindiagramsn}]{poisson}. 

Globally, for  any derived Artin $N$-stack $\fX$, we can  take $L[-n-1]$ to be $\widehat{\Pol}(\fX,n):=\oR\Gamma((D_*\fX)_{\et}, \oL\widehat{\Pol}(\sO,n))$ in the sense of \cite[\S 4.1.4]{smallet2}, 
and the same expression behaves well for any homogeneous derived $\infty$-stack with bounded below cotangent complexes, by the commutative analogue of \cite[\S \ref{NCstacks-formalrepsn}]{NCstacks}. 
 
 We then take $V$ to be the complete filtered complex $\DR(A)[n+1]$ or $\DR(\fX):=\oR\Gamma(\fX, \oL\DR(\sO)) \simeq \oR\Gamma((D_*\fX)_{\et}, \oL\DR(\sO))$. The conclusion of \S \ref{compequivsn} applies in this setting, giving an equivalence between the space $\Sp(\fX,n):=\mmc(F^2\DR(\fX)[n+1])^{\nondeg}$ of $n$-shifted symplectic structures and the space $\mmc(F^2\widehat{\Pol}(\fX,n)[n+1])^{\nondeg}$ of non-degenerate $n$-shifted Poisson structures (\cite[Theorem \ref{poisson-Artinthm}]{poisson} or \cite[Theorem 3.2.4]{CPTVV} by a less direct method).
 
The new content of Proposition \ref{exactcompprop} is then that we have an equivalence between the space $\Sp^{\ex}(\fX,n)$ of exact $n$-shifted symplectic structures, given by 
\[
\mmc\left(\cocone(F^2\DR(\fX) \to \DR(\fX))[n+1]\right)^{\nondeg}\simeq \mmc(\DR(\fX)[n]/F^2)^{\nondeg}
 \]
 (whose path components correspond to elements of  $\H^{1+n}\oR\Gamma(\fX, \sO_{\fX} \xra{d} \oL\Omega^1_{\fX})$ with non-degenerate image in $\H^n(\fX, \oL\Omega^2_{\fX})$ under the de Rham differential),
and the space
\[
 \cP^D(\fX,n)^{\nondeg}:= \mmc\left(\cocone(\sigma\eps \co F^2\widehat{\Pol}(\fX,n) \to \widehat{\Pol}(\fX,n)\eps)[n+1]\right)^{\nondeg}
\]
of non-degenerate $n$-shifted Poisson structures with formal derivation, terminology we will justify in \S \ref{interpretnsn}.

For negative shifts, symplectic structures  are canonically exact when working over a field, by \cite[Proposition 3.2]{KinjoParkSafronov} or \cite[Proposition 2.2.11]{HHR1}\footnote{Although stated only for $\Cx$, their argument for existence of the retract $ \tau^{\le 0}(\DR/F^2) \to \tau^{\le 0}\DR$ of the canonical map is valid over any characteristic $0$ field, as is the surjectivity of $\tau^{\le 0}(\DR/F^2) \to \tau^{\le 0}(F^2\DR[1])$, since \cite{emmanouil} works in that generality, making the algebraically closed hypothesis in \cite[Proposition 5.6a]{BBBJdarboux} redundant.}.
Explicitly, they give a canonical quasi-isomorphism $ \tau^{\le 0}(\DR/F^2) \simeq \tau^{\le 0}(F^2\DR[1]) \oplus \H^0\DR$ of sheaves, so 
\begin{align*}
 \Sp^{\ex}(\fX,-1) \simeq \Sp(\fX,-1) \by \H^0_{\dR}(\fX), \quad \text{and}\\
 \Sp^{\ex}(\fX,n) \simeq \Sp(\fX,n ) \quad \text{for } n<-1.
\end{align*}

 \item\label{poissonexLag} We can also apply Proposition \ref{exactcompprop} to the equivalence between shifted Lagrangians and non-degenerate shifted co-isotropic structures. Associated to a morphism $f:A \to B$ of (stacky) CDGAs, we take $L$ to be the complete filtered DGLA $\widehat{\Pol}(A,B;0):=Q\widehat{\Pol}(A,B;0)[1]/G^1$ from \cite[Definition \ref{DQLag-QPoldef}]{DQLag} in the $0$-shifted case, generalising to the filtered DGLA $\Pol(f,n)[n+1]$ of \cite[\S 4.2]{MelaniSafronovI}. Taking derived global sections as in (\ref{poissonexalg}), we also obtain a complete filtered DGLA $\widehat{\Pol}(\fY,\fX;0)[n+1]$ for any morphism $f \co \fX \to \fY$ of homogeneous derived $\infty$-stacks with bounded below cotangent complexes. The space $\cP(\fX,\fY;n)$ of co-isotropic structures on $f$ can then be defined as $\mmc(F^2\widehat{\Pol}(\fY,\fX;n)[n+1])$; there is a canonical map $ \cP(\fX,\fY;n) \to \cP(\fY,n)$ whose homotopy fibre over a shifted Poisson structure $\pi$ on $\fY$ gives the space of shifted co-isotropic  (with respect to $\pi$) structures on $\fX$ over $\fY$.
 
 The filtered complex $V$ in this setting is simply $\cocone(\DR(\fY) \to \DR(\fX))[n+1]$, with  \cite[Proposition \ref{DQLag-prop3}]{DQLag} (in the $0$-shifted case) and \cite[Theorem 4.22]{MelaniSafronovII} giving an equivalence between the space $\cP(\fY,\fX;n)^{\nondeg}$ of Lagrangian co-isotropic structures and the space $\Lag(\fY,\fX;n):=\mmc(F^2V[n+1])$ of $n$-shifted Lagrangian structures.
 
 The new content of Proposition \ref{exactcompprop} is then that we have an equivalence between the space 
\begin{align*}
\Lag^{\ex}(\fY,\fX;n):= &\mmc(\Tot(F^2\DR(\fY) \to \DR(\fY) \oplus F^2\DR(\fX) \to \DR(\fX))[n+1])^{\nondeg}\\
&\simeq \mmc((\cocone(\DR(\fY) \to \DR(\fX))[n]/F^2))^{\nondeg}
 \end{align*}
of exact $n$-shifted Lagrangian structures 
and the space
\[
 \cP^D(\fY,\fX;n)^{\nondeg}:= \mmc\left(\cocone(\sigma\eps \co F^2\widehat{\Pol}(\fY,\fX;n) \to \widehat{\Pol}(\fY,\fX;n)\eps)[n+1]\right)^{\nondeg}
\]
of non-degenerate $n$-shifted co-isotropic structures with formal derivation. 

Note that the natural map $\Lag^{\ex}(\fY,\fX;n) \to \Lag(\fY,\fX;n)$ from the space of exact Lagrangians to the space of all Lagrangians will be an equivalence whenever the morphism $\fX \to \fY$ induces an (unfiltered) quasi-isomorphism $\DR(\fY) \to \DR(\fX)$. Examples of this form arise whenever $\fY$ is a formal thickening of $\fX$, such as a relative de Rham stack $(\fX/\fS)_{\dR}$ or the functor $B \mapsto \fX(B^0)\by^h_{\fY(B^0)}D_*\fY(B)$ on stacky CDGAs. 

Over a field, the map $\Lag^{\ex}(\fY,\fX;n) \to \Lag(\fY,\fX;n)$ will also be an equivalence for $n<-1$ as in the symplectic case, and $\Lag^{\ex}(\fY,\fX;-1) \simeq \Lag(\fY,\fX;-1)\by \ker(\H^0_{\dR}(\fY) \to \H^0_{\dR}(\fX))$.

 \item\label{poissonexan} As in \cite[\S 4.4]{DStein}, there are analogues of (\ref{poissonexalg}) and (\ref{poissonexLag}) in analytic and $\C^{\infty}$ settings, working with dg EFC algebras and dg $\C^{\infty}$-algebras, and the corresponding cotangent complexes, in place of abstract CDGAs. The statements about exact shifted symplectic and Lagrangian structures then carry over to derived analytic stacks and derived $\C^{\infty}$ stacks in exactly the same way, substituting in the relevant notions of differential forms and multiderivations. 
 
The retraction argument from \cite[Proposition 3.2]{KinjoParkSafronov} or \cite[Proposition 2.2.11]{HHR1} 
still holds in these settings, giving $\Sp^{\ex}(-,n) \simeq \Sp(-,n)$ for $n<-1$.  We may compute $\DR(\fX)$ by completing $\DR(\fX)$ along the underived truncation $\pi^0\fX$ by the argument of \cite{FeiginTsygan,bhattDerivedDR}, giving the equivalence of derived de Rham cohomology and that of the completed complex $\hat{\Omega}^{\bt}_{\fX}$. In the $\Cx$-analytic setting, the latter is equivalent to  Betti cohomology as in  \cite[Theorem IV.1.1]{hartshorneDRCoho}.
That  supplies the surjectivity part of the argument from \cite[Proposition 2.2.11]{HHR1}, so  $\Sp^{\ex}(\fX,-1) \simeq \Sp(\fX,-1) \by \H^0(\fX,\bK)$. The same expression holds whenever the  de Rham theorem holds for derived de Rham cohomology; in the $\C^{\infty}$ setting, this is a more restricted class of spaces as in \cite[\S 5]{taroyanDRDerivedDG}.




\item There are also non-commutative analogues of (\ref{poissonexalg}) and (\ref{poissonexLag}), as well as of the analytic case of (\ref{poissonexan}). For these, we take $L$ to be the complete filtered DGLA $\widehat{\Pol}^{nc}_{\cyc}(A,n)[n+1]$ of $n$-shifted cyclic multiderivations  on a (stacky) $R$-DGAA $A$, as defined in \cite[Definitions \ref{NCpoisson-polcycdef} and \ref{NCpoisson-bipolcycdef}]{NCpoisson} in the NC algebraic setting,  and on a stacky dg FEFC $R$-algebra as in \cite[Definition \ref{analyticNC-anbipolcycdef}]{analyticNC} in the NC analytic setting.

As in \cite[Definition \ref{NCpoisson-cPhgsdef}]{NCpoisson} or   \cite[Definition \ref{analyticNC-PreBiSphgsdef}]{analyticNC}, this extends to $\widehat{\Pol}^{nc}_{\cyc}(\fX,n):= \oR\Gamma((D_*\fX)_{\rig}, \widehat{\Pol}^{nc}_{\cyc}(-,n))$ for any derived NC prestack (algebraic or analytic)   $\fX$ which is homogeneous and has bounded below cotangent complexes. The  space $\cD\cP(\fX,n)$  of $n$-shifted double Poisson structures on $\fX$ is then $\mmc(F^2\widehat{\Pol}^{nc}_{\cyc}(\fX,n))$. Shifted double co-isotropic structures can be formulated similarly.

In these settings, we take $V[-1-n]$ to be the   cyclic de Rham complex $\DR_{\cyc}(A)$ equipped with its Hodge filtration, or its derived global sections  $\DR_{\cyc}(\fX)$ over a homogeneous derived  NC prestack $\fX$. The space $\BiSp(\fX,n)$ of $n$-shifted bisymplectic structures on $\fX$ is then $\mmc(F^2\DR_{\cyc}(\fX)[n+1]^{\nondeg}) $ as in \cite[Definition  \ref{NCpoisson-PreBiSphgsdef}]{NCpoisson}, with the conclusion of \S \ref{compequivsn} applying in this setting to give the equivalence $\BiSp(\fX,n)  \simeq \cD\cP(\fX,n)^{\nondeg}$ of \cite[Theorem \ref{NCpoisson-Artinthm}]{NCpoisson}. 

The new content of Proposition \ref{exactcompprop} (with analogous analytic and bi-Lagrangian statements) is then that we have an equivalence between the space 
\begin{align*}
\BiSp^{\ex}(\fX,n):= &\mmc(\cocone(F^2\DR_{\cyc}(\fX) \to \DR_{\cyc}(\fX))[n+1])^{\nondeg}\\
&\simeq \mmc(\DR_{\cyc}(\fX)[n]/F^2)^{\nondeg}
 \end{align*}
of exact $n$-shifted bisymplectic structures (whose path components correspond to elements of  $\H^{1+n}\oR\Gamma(X, \oL\sO_X/[\sO_X,\sO_X] \xra{d} \oL\Omega^1_X\ten^{\oL}_{\sO_X^e}\sO_X)$ with non-degenerate image in $\H^n(\fX, \oL\Omega^2_{\fX,\cyc})$ under the Karoubi--de Rham differential),
and the space
\[
 \cD\cP^D(\fX,n)^{\nondeg}:= \mmc(\cocone(\sigma\eps \co F^2\widehat{\Pol}_{\cyc}(\fX,n) \to \widehat{\Pol}_{\cyc}(\fX,n)\eps)[n+1])^{\nondeg}
\]
of non-degenerate $n$-shifted double Poisson structures with formal derivation. 
\end{enumerate}
\end{examples}

\subsection{Deformation quantisations} 

Replacing $L$ with $\tilde{L}$, we can consider the DGLA
 $\cocone(\tilde{\sigma}\eps \co \tilde{F}^2\tilde{L} \to \tilde{L}\eps)$ and its sub-DGLA of fixed points for the involution $*$. We respectively regard their Maurer--Cartan spaces $\mmc$ as the spaces of deformation quantisations with $\hbar\pd_{\hbar}$-derivations and its subspace of self-dual quantisations.
 
 This leads to the following elementary quantisation result for non-degenerate Poisson structures with exact data. 
 
 \begin{corollary}\label{exactquantcor}
  In the setting of \S \ref{qcompequivsn}, 
  the canonical map 
  \[
\mmc(  \cocone(\tilde{\sigma}\eps \co \tilde{F}^2\tilde{L} \to \tilde{L}\eps)^{*=\id})^{\nondeg}  \to \mmc(\cocone(\sigma\eps \co F^2L \to L\eps))^{\nondeg},
  \]
  induced by the filtered quasi-isomorphism $\tilde{L}/\hbar\tilde{L} \simeq L$, is a weak equivalence from the space of self-dual non-degenerate deformation quantisations with $\hbar\pd_{\hbar}$-derivation to the space of non-degenerate Poisson structures with formal derivation.
 \end{corollary}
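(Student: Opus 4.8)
The plan is to deduce this as a (comparatively) formal consequence of Proposition \ref{exactcompprop}, applied both to the classical data $(L,V,\sigma,\mu)$ and to the self-dual quantised data $(\tilde L^{*=\id},V\brhh,\tilde\sigma,\tilde\mu)$ of \S\ref{qcompequivsn}, the two being linked by an elementary bookkeeping identity in the de Rham direction; no quantisation input beyond \S\ref{qcompequivsn} is needed.

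First I would record that Proposition \ref{exactcompprop} applies verbatim to $(\tilde L^{*=\id},V\brhh,\tilde\sigma,\tilde\mu)$: as \S\ref{qcompequivsn} notes, this data satisfies the hypotheses of \S\ref{gencompatsn}--\S\ref{compequivsn} --- in particular $\Comp(\tilde L^{*=\id},V\brhh,\tilde\sigma,\tilde\mu)^{\nondeg}\to\mmc(\tilde F^2(V\brhh))^{\nondeg}$ is a weak equivalence and $\tilde\nu(\omega,\pi)-\tilde\sigma$ is a quasi-isomorphism on $\gr_{\tilde F}\tilde L^{*=\id}$ --- so the proof of Proposition \ref{exactcompprop} goes through unchanged. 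Since $*$ preserves $\tilde F$ and commutes with $\tilde\sigma$, we have $\cocone(\tilde\sigma\eps\co\tilde F^2\tilde L\to\tilde L\eps)^{*=\id}=\cocone(\tilde\sigma\eps\co\tilde F^2\tilde L^{*=\id}\to\tilde L^{*=\id}\eps)$, and the proposition yields a natural weak equivalence
\[
 P_q\co\mmc\bigl(\cocone(\tilde F^2(V\brhh)\to V\brhh)\bigr)^{\nondeg}\;\xra{\ \sim\ }\;\mmc\bigl(\cocone(\tilde\sigma\eps\co\tilde F^2\tilde L\to\tilde L\eps)^{*=\id}\bigr)^{\nondeg},
\]
whose target is the space in the statement. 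The classical instance of Proposition \ref{exactcompprop} gives a natural weak equivalence $P_c\co\mmc(\cocone(F^2V\to V))^{\nondeg}\xra{\sim}\mmc(\cocone(\sigma\eps\co F^2L\to L\eps))^{\nondeg}$.

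Next I would relate the two spaces of exact pre-symplectic structures. Because $F^0V=V$, one has $V\brhh=V\oplus\hbar^2V\brhh$, and directly from the definition of $\tilde F$ one has $\tilde F^2(V\brhh)=F^2V\oplus\hbar^2V\brhh$, with the two copies of $\hbar^2V\brhh$ matching under these splittings. Hence the reduction $\hbar\mapsto0$ is a surjection of abelian DGLAs $\cocone(\tilde F^2(V\brhh)\to V\brhh)\to\cocone(F^2V\to V)$ whose kernel, being the cone of an identity map, is acyclic; it is therefore a filtered quasi-isomorphism. Applying $\mmc$ and restricting to non-degenerate loci --- non-degeneracy being a condition on the reduction modulo $\hbar$ --- produces a weak equivalence $B\co\mmc(\cocone(\tilde F^2(V\brhh)\to V\brhh))^{\nondeg}\xra{\sim}\mmc(\cocone(F^2V\to V))^{\nondeg}$. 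Thus $P_c\circ B\circ P_q^{-1}$ is a weak equivalence between the two sides of the Corollary, and it remains only to identify it with the canonical map $C$ of the statement.

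For that last step I would show that the square $C\circ P_q\simeq P_c\circ B$ commutes up to homotopy, so that $C\simeq P_c\circ B\circ P_q^{-1}$. This follows from the naturality of the construction $\theta$ in the proof of Proposition \ref{exactcompprop} --- and of the equivalences of \S\ref{compequivsn} feeding into it --- once one checks that reduction modulo $\hbar$ is a morphism from the data $(\tilde L^{*=\id},V\brhh,\tilde\sigma,\tilde\mu)$ to $(L,V,\sigma,\mu)$: here $V\brhh\to V$ and the composite $\tilde L^{*=\id}\into\tilde L\onto\tilde L/\hbar\tilde L\xra{\ \sim\ }L$ are filtered maps, $\tilde\sigma$ is carried to $\sigma$ (as noted in \S\ref{quantsn}), and --- the only non-formal point --- $\tilde\mu$ is carried to $\mu$ up to homotopy, because $\tilde\mu$ is constructed as a quantisation-theoretic lift of the classical compatibility map with the associator corrections lying in $\hbar\tilde L$. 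I expect this compatibility of $\tilde\mu$ with $\mu$, together with tracking naturality through the zigzags defining $P_c$ and $P_q$, to be the only genuine obstacle; note in particular that nothing above requires $\tilde L^{*=\id}\to L$ to be a quasi-isomorphism --- it is not --- the weak-equivalence property coming entirely from the $\tilde\sigma\eps$-cone construction.
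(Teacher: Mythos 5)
Your proposal is correct and takes essentially the same route as the paper's proof: apply Proposition \ref{exactcompprop} to both $(L,V,\sigma,\mu)$ and $(\tilde{L}^{*=\id},V\brhh,\tilde{\sigma},\tilde{\mu})$, reducing the statement to the de Rham-side map $\hbar^2\mapsto 0$, which is a weak equivalence because $\tilde{F}^2(V\brhh)=F^2V\by\hbar^2V\brhh$ and $\cocone(V\to V)$ is acyclic. The only difference is that you make explicit the naturality of the comparison and the compatibility of $\tilde{\mu}$ with $\mu$ modulo $\hbar$ needed to identify the resulting zigzag with the canonical map, a point the paper leaves implicit.
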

\begin{proof}
By applying Proposition \ref{exactcompprop} to the data $(L,V,\sigma,\mu)$ and $(\tilde{L}^{*=\id},V\brhh,\tilde{\sigma},\tilde{\mu})$, this reduces to showing that the map
\[
\mmc( \cocone(\tilde{F}^2(V\brhh) \to V\brhh))^{\nondeg} \xra{\hbar^2 \mapsto 0} \mmc(\cocone(F^2V \to V))^{\nondeg}
\]
is a weak equivalence. Since $\tilde{F}^2(V\brhh) = F^2V \by \hbar^2V\brhh$ and is abelian, the space on the left decomposes as 
\[
 \mmc(\cocone(F^2V \to V))^{\nondeg} \by \prod_{i\ge 1} \mmc(\hbar^{2i}\cocone(V \to V)).
\]
The complex $\cocone(V \to V)$ is acyclic, giving the required weak equivalence.
\end{proof}

 \begin{remark}
  This quantisation result is much stronger than those for non-degenerate Poisson structures without additional data, since the quantisation is essentially unique, instead of having a torsor of choices parametrised by Poisson cohomology. Although the data of \S \ref{qcompequivsn} are used to establish equivalence, the canonical map does not depend on them. 
 \end{remark}

 Applying Corollary \ref{exactquantcor} to both $\tilde{L}$ and the DGLA $L\brh$ of Remark \ref{Lbrhrmk} gives:
 \begin{corollary}\label{exactquantbrhcor}
  In the setup of \S \ref{quantsn}, 
  there is a natural weak equivalence 
  \begin{align*}
&\mmc(  \cocone(\tilde{\sigma}\eps \co \tilde{F}^2\tilde{L} \to \tilde{L}\eps)^{*=\id})^{\nondeg}\\
&\simeq  \mmc(\cocone((\sigma+ \hbar\pd_{\hbar}) \eps \co F^2L\by \hbar^2L\brhh \to L\brhh\eps))^{\nondeg}.
  \end{align*}
 \end{corollary}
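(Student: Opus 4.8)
The plan is to deduce this by applying Corollary \ref{exactquantcor} twice and gluing the two resulting weak equivalences along their common value, the space $\mmc(\cocone(\sigma\eps \co F^2L \to L\eps))^{\nondeg}$ of non-degenerate Poisson structures with formal derivation.

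First I would invoke Corollary \ref{exactquantcor} for the given triple $(\breve{L},W,i)$: by definition of the terms there, it provides a natural weak equivalence from the left-hand side of the statement to $\mmc(\cocone(\sigma\eps \co F^2L \to L\eps))^{\nondeg}$. So it suffices to produce a natural weak equivalence between the right-hand side and that same space. For this I would apply Corollary \ref{exactquantcor} a second time, now to the DGLA $\breve{L}' := L$ equipped with the weight filtration $W_mL := \bigoplus_{j \le m}\cW_jL$ and the involution $i'$ acting as $(-1)^j$ on $\cW_jL$, as in Remark \ref{Lbrhrmk}. This triple satisfies Settings \ref{breveLsetting} and \ref{isetting} with $\gr^W\breve{L}' \to L$ the identity, and its associated Rees DGLA $\widehat{\Rees}(L,W)$ is canonically isomorphic to $L\brh$ via $(-)_{\hbar}$; for the $L_{\infty}$-derivation demanded by \S \ref{qcompequivsn} one takes the evident $V\brhh$-linear, $\hbar$-adically continuous extension $\tilde{\mu}'$ of $\mu$ itself, which needs no associator since no product is being deformed. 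Corollary \ref{exactquantcor} then yields a natural weak equivalence from $\mmc(\cocone(\tilde{\sigma}\eps \co \tilde{F}^2\widehat{\Rees}(L,W) \to \widehat{\Rees}(L,W)\eps)^{*=\id})^{\nondeg}$ to $\mmc(\cocone(\sigma\eps \co F^2L \to L\eps))^{\nondeg}$ (the latter being the space of underlying Poisson structures of $\gr^W\breve{L}' = L$, i.e. the same target as in the first application).

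It then remains only to rewrite the source of this second equivalence in the terms of the statement, via the dictionary of Remark \ref{Lbrhrmk}: under $(-)_{\hbar}$ the derivation $\tilde{\sigma}$ corresponds to $\sigma + \hbar\pd_{\hbar}$, the involution $*$ to $\hbar \mapsto -\hbar$ (so $\widehat{\Rees}(L,W)^{*=\id}$ corresponds to $L\brhh$), and $\tilde{F}^2\widehat{\Rees}(L,W)$ to $\prod_{i \ge 0}\hbar^iF^{2-i}L = F^2L \oplus \hbar F^1L \oplus \hbar^2L\brh$, whose $*=\id$ part is $F^2L \by \hbar^2L\brhh$. Hence the source is precisely $\mmc(\cocone((\sigma+\hbar\pd_{\hbar})\eps \co F^2L \by \hbar^2L\brhh \to L\brhh\eps))^{\nondeg}$, and composing the two weak equivalences gives the corollary.

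The one point needing genuine care, rather than bookkeeping, is checking that $(\breve{L}',W,i')$ together with $\tilde{\mu}'$ really does fall within the hypotheses of \S \ref{qcompequivsn} on which Corollary \ref{exactquantcor} rests — in particular that $\tilde{\nu}(\omega,\pi) - \tilde{\sigma}$ remains a quasi-isomorphism on $\gr_{\tilde{F}}\widehat{\Rees}(L,W)^{*=\id}$. Here the decisive observation is that $\tilde{\sigma}$ acts as multiplication by $m$ on $\tilde{\cW}_m\widehat{\Rees}(L,W)$, exactly mirroring the action of $\sigma$ on $\cW_mL$, so that this reduces cleanly to the quasi-isomorphism input already assumed for $(L,V,\sigma,\mu)$ in \S \ref{compequivsn}; the rest is the Rees/$(-)_{\hbar}$ correspondence.
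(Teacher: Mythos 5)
Your proposal is correct and is exactly the paper's route: the paper deduces the corollary in one line by "applying Corollary \ref{exactquantcor} to both $\tilde{L}$ and the DGLA $L\brh$ of Remark \ref{Lbrhrmk}", then reading off the source via the $(-)_{\hbar}$ dictionary, just as you do. Your verification that $(L,W,i')$ with the $\hbar$-linearly extended $\tilde{\mu}'$ satisfies the hypotheses of \S\ref{qcompequivsn} (in particular that $\tilde{\nu}-\tilde{\sigma}$ stays a quasi-isomorphism on $\gr_{\tilde{F}}$ of the self-dual part) is a point the paper leaves implicit, and your bookkeeping of $\tilde{F}^2$, $\tilde{\sigma}$ and $*$ under $(-)_{\hbar}$ matches the Remark.
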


 \begin{examples}\label{quantex}
 We can apply Corollary \ref{exactquantcor} to the following specific settings, quantising non-degenerate Poisson structures with derivation as considered in Examples \ref{poissonex}.  See \S \ref{interpretDQsn} for justification of the algebraic interpretations given of Maurer--Cartan elements.
 
\begin{enumerate}[wide, labelindent=0pt, itemsep=6pt]

\item\label{quantex0} For $0$-shifted quantisations as  established    in terms of DQ algebroid deformations of the structure sheaf in \cite{DQnonneg,DQpoisson} for derived Artin stacks  $\fX$ (algebraic, analytic or $\C^{\infty}$) with perfect cotangent complexes, we take $(\breve{L},W)$ to be the filtered  DGLA  given by $W_m\breve{L}:= \oR\Gamma((D_*\fX)_{\et}, \hatTot \tau^{\HH}_{m+1} \oL\cD^{\poly}_{\oplus}(\sO))[1]$, where $\cD^{\poly}_{\oplus}$ is the brace algebra of polydifferential operators, $\hatTot$ the sum-product total complex  and $\tau^{\HH}$ good truncation in the Hochschild direction. Equivalent  variants replace filtration $\tau^{\HH}$  with the quasi-isomorphic filtration $\gamma$ of \cite[Definition \ref{DQLag-HHdef0}]{DQLag}, and in algebraic cases replace polydifferential operators with the cohomological Hochschild complex. The involution $i$ on $\cD^{\poly}_{\oplus}(A)$ is given up to Koszul sign by reversing the order of inputs.

Then $\tilde{L}:=\widehat{\Rees}(\breve{L}, W)$ is the complete filtered DGLA
$Q\widehat{\Pol}(\fX,0)[1]:= \oR\Gamma((D_*\fX)_{\et}, \oL Q\widehat{\Pol}(\sO,0))[1]$
of quantised polyvectors from \cite[Definition \ref{DQpoisson-qpoldef2}]{DQpoisson}.

Corollary \ref{exactquantcor} then gives an equivalence between the space $\cP^D(\fX,0)^{\nondeg}$ of non-degenerate $0$-shifted Poisson structures with formal derivation from Examples \ref{poissonex}.(\ref{poissonexalg}) and the space 
\[
 Q\cP^D(\fX,0)^{\nondeg,sd}:= \mmc( \cocone( \eps\hbar \pd_{\hbar} \co \tilde{F}^2Q\widehat{\Pol}(\fX,0) \to Q\widehat{\Pol}(\fX,0) \eps)^{*=\id}[1])^{\nondeg}
\]
of self-dual (or involutive) deformation quantisations with $\hbar\pd_{\hbar}$-derivation, terminology we will justify in \S \ref{interpretDQsn}; these roughly correspond to almost commutative (or $BD_1$) associative algebroid deformations of the structure sheaf, equipped with a connection in the $\hbar$ direction with simple poles. 

In this case, \cite[Theorem \ref{DQpoisson-fildefhochthm1}]{DQpoisson} associates to any even associator an equivalence $\gr^W\breve{L} \simeq \breve{L}$ of filtered involutive DGLAs, and hence,  as in Remark \ref{Lbrhrmk}, an equivalence $\tilde{L} \simeq (\gr^W\breve{L})\brh$ between quantised polyvectors and power series in polyvectors. The only new content of  Corollary \ref{exactquantbrhcor} in this case is that the equivalence of \cite{DQnonneg} is in fact independent of any choice of associator when the structure is non-degenerate and exact.

When $\fX$ is an analytic or smooth manifold, objects of $Q\cP^D(\fX,0)$ are just given  locally by pairs $(\sA,D)$ for $\sA$ an associative deformation of $\sO_{\fX}$ and $D$ an $\hbar\pd_{\hbar}$-derivation on  $\sA$. Isomorphisms $(\sA,D) \to (\sA',D')$ are pairs $(\phi, \gamma)$ for $\phi \co \sA \to \sA'$ an infinitesimal isomorphism  and $\gamma \in \hbar^{-1}\sA$ satisfying $\phi^{-1} \circ  D' \circ \phi - D  =[\gamma,-]$. A $2$-isomorphism from $(\phi,\gamma)$ to $(\psi, \eta)$ is then an element $u \in 1+ \hbar\sA$ satisfying $\psi(a) = \phi (uau^{-1}) $ and  $\eta = u^{-1}\gamma u + u^{-1}D(u)$. 

\item\label{quantexpos} For $n>0$, we can similarly  take $\breve{L}[1-n]$ to be a higher ($E_{n+1}$) Hochschild complex, with $W_m := \tau^{\HH}_{(m+1)(n+1)} $.  Then the complete filtered DGLA $\widehat{\Rees}(\breve{L}, W)$ can be regarded as a  DGLA $Q\widehat{\Pol}(A,n)[n+1]$ of $n$-shifted quantised polyvectors.

Its Maurer--Cartan elements 
similarly correspond to  $BD_{n+1}$-algebra quantisations of $n$-shifted Poisson structures as in \cite[\S 3.5.2]{CPTVV}. Beware that our formulation in terms of $\tilde{F}^2Q\widehat{\Pol}(A,n)[n+1]$ is a weak, rather than strict, quantisation, generalising the notion of DQ algebroid in the $n=0$ case.  Strict quantisations can be formulated by replacing $\tilde{F}^2Q\widehat{\Pol}(A,n)$ with $\ker(\tilde{F}^2Q\widehat{\Pol}(A,n) \to \hbar A\brh)$, which eliminates curvature or inner automorphisms.

Since any choice of associator gives a filtered formality equivalence $E_{n+1} \simeq P_{n+1}$ and hence  
$BD_{n+1} \simeq P_{n+1}\brh$,   such deformation quantisations exist strictly and without restriction on the cotangent complex. In our terms, formality implies $\breve{L} \simeq \gr^W\breve{L}$ and hence  $Q\widehat{\Pol}(A,n) \simeq  \widehat{\Pol}(A,n)\brh$,  intertwining the respective involutions if the associator is even. Unlike the $0$-shifted case, the quasi-isomorphisms $\breve{L} \simeq \gr^W\breve{L}$ respect the augmentation maps to $A[n+1]$, which is why strict quantisation is always  possible for $n>0$. 

Again, Corollary \ref{exactquantcor} gives an equivalence  $Q\cP^D(\fX,n)^{sd,\nondeg}\simeq \cP^D(\fX,n)^{\nondeg}$ between the respective structures with formal derivation, and the only new content of Corollary \ref{exactquantbrhcor} is that the parametrisation is  in fact independent of any choice of associator in non-degenerate exact cases. 

\item\label{quantexcoiso} There are similar formulations and consequences for positively  shifted co-isotropic structures in terms of the Swiss cheese operad as in \cite{MelaniSafronovII}, with a quantisation consisting locally of a $BD_{n+1}$-algebra acting on a $BD_n$-algebra. For $n>1$, they show deformation quantisation of $n$-shifted co-isotropic structures as a consequence of formality, so exactly the same considerations apply as for (\ref{quantexpos}). For $n=1$, deformation quantisation is a consequence of  \cite[Theorem \ref{DQpoisson-fildefhochthm1}]{DQpoisson} as in \cite[\S \ref{DQpoisson-coisosn2}]{DQpoisson}, so the considerations of (\ref{quantex0}) apply instead.

Thus exact $n$-shifted Lagrangians admit essentially unique self-dual quantisations with $\hbar\pd_{\hbar}$-derivation for $n\ge 1$.

\item \label{quantexvanish}  $(-1)$-shifted deformation quantisations are formulated in \cite{DQvanish} for line bundles $\sL$ on derived Artin stacks, and established there for non-degenerate quantisations when there is a right $\sD$-module structure on $\sL^{\ten 2}$ (which can be thought of as $\sL$ being a spin structure or module of half-densities). As explained in \cite[\S 4.4]{DStein}, those results extend verbatim to analytic and $\C^{\infty}$ settings, with the $\C^{\infty}$ case spelt out in \cite[\S \ref{DQDG-quantneg1sn}]{DQDG}. 

In these cases, 
 we take $(\breve{L},W)$ to be the filtered  DGLA  given by $W_m\breve{L}:=\oR\Gamma((D_*\fX)_{\et},\hatTot \oL F_{m+1} \cD(\sL))$, for the order filtration $F$  on the  associative algebra $\cD(\sL)$ of differential operators on $\sL$. Then as in \cite[Definition \ref{DQvanish-qpoldef} and \S \ref{DQvanish-biquantsn}]{DQvanish}, $\tilde{L}:= \widehat{\Rees}(\breve{L}, W)$ is the DGLA 
$Q\widehat{\Pol}(\sL,-1)$
of quantised polyvectors. When $\sL^{\ten 2}$ carries a right $\sD$-module structure, there is a natural anti-involution $i \co \cD(\sL)\simeq \cD(\sL)^{\op}$, which extends to an involution $*$ of $Q\widehat{\Pol}(\sL,-1)$ by also reversing the sign of $\hbar$. 

There is also an obvious generalisation given fixing just a line bundle $\sM$ with a  right $\sD$-module structure and considering $Q\widehat{\Pol}(-,-1)$ as a functor on the $\mu_2$-gerbe of square roots of $\sM$.

Corollary \ref{exactquantcor} then gives an equivalence between the space $\cP^D(\fX,-1)^{\nondeg}$ of non-degenerate $(-1)$-shifted Poisson structures with formal derivation from Examples \ref{poissonex}.(\ref{poissonexalg}) and the space $Q\cP^D(\sL,-1)^{sd,\nondeg}$, defined to be
\[
 \mmc( \cocone( \eps\hbar \pd_{\hbar} \co \tilde{F}^2Q\widehat{\Pol}(\sL,-1) \to Q\widehat{\Pol}(\sL,-1) \eps)^{*=\id})^{\nondeg},
\]
of self-dual  non-degenerate deformation quantisations with $\hbar\pd_{\hbar}$-derivation, terminology we will justify in \S\S \ref{interpretDQsn} and \ref{vanishsn}; 
these roughly correspond to  $BD_0$-algebra (i.e. almost commutative $BV$-algebra) deformations of the structure sheaf twisted by $\sL$, equipped with a connection in the $\hbar$ direction with simple poles. We will see that the connection corresponds to the monodromy operator on the vanishing cycles sheaf.

Deformation quantisation is  established in \cite{DQvanish} for non-degenerate structures,  but Corollary \ref{exactquantcor}  goes further in showing that each exact $(-1)$-shifted symplectic structure has a unique quantisation  admitting an $\hbar\pd_{\hbar}$ derivation, whose existence cannot easily be deduced from the method of \cite{DQvanish}. Since all $(-1)$-shifted symplectic structures are canonically exact by \cite[Proposition 3.2]{KinjoParkSafronov} or \cite[Proposition 2.2.11]{HHR1}, this distinguished choice and additional structure always exist.

\item \label{0shiftedLagex}
Similar considerations to those for $(-1)$-shifted symplectic structures apply to $0$-shifted co-isotropic structures as in \cite{DQLag}, where quantised polyvectors are formulated in terms of a Hochschild complex acting on a ring of differential operators.  The only difference is that the distinguished choice and additional structure only exists when the  $0$-shifted symplectic structure on the base is  exact, which is only guaranteed  in a formal neighbourhood of the Lagrangian. 


\end{enumerate}
\end{examples}
 
\begin{remark}[Limitations]
\begin{enumerate}[wide, labelindent=0pt, itemsep=6pt]
 \item 
There is a principle, originally announced by Costello and Rozenblyum and now proved in \cite{tomicShiftedLagThick}, that every shifted Poisson structure arises as a formal Lagrangian, and every such Lagrangian has an exact form as in Example \ref{poissonex}.(\ref{poissonexLag}). However,  we cannot use this to infer the existence of deformation quantisations for  degenerate $0$-shifted Poisson structures, because our quantisations in Example \ref{quantex}.(\ref{0shiftedLagex}) 
incorporate curvature. Specifically, a quantisation of a Lagrangian morphism $\fX \to \fY$ only gives rise to a (possibly curved)  quantisation of the Poisson structure on $\fX$ when the quantisation of $\fY$ is uncurved, because curvature on $\fY$ is effectively incorporated in  the Maurer--Cartan equation for $\fX$. 

 Indeed, given an $n$-shifted Poisson structure $\pi \in \mmc(\widehat{\Pol}(A,n)[n+1])$, the ambient formal  $(n+1)$-shifted symplectic object roughly corresponds to  $P:= (\widehat{\Pol}(A,n),\delta + [\pi,-])$ with its canonical Poisson structure, and then we can describe the essentially  unique self-dual quantised Lagrangian structure with formal derivation as follows. Writing $Q\widehat{\Pol}$ for   $Q\widehat{\Pol}(A,n)$ when $n\ge 0$ and $Q\widehat{\Pol}(\sL,-1)$ when $n=-1$, pick any element $\Delta \in \tilde{F}^2Q\widehat{\Pol}^{n+2}$ lifting $\pi$. 
 The curvature $\kappa := \delta \Delta + \half [\Delta,\Delta]$ lies in $\ker((\tilde{F}^2 Q\widehat{\Pol})^{*=\id} \to F^2\widehat{\Pol})$, i.e. in $(\hbar^2Q\widehat{\Pol})^{*=\id}$.
 We can then form a curved self-dual quantisation $\tilde{P}:=(\hbar Q\widehat{\Pol}, \delta + [\Delta,-], \kappa)$ of $P$, noting that $\kappa \in \hbar \tilde{P}$, and equip it with the curved $\hbar\pd_{\hbar}$-derivation $(\hbar\pd_{\hbar}, \hbar\pd_{\hbar}(\Delta))$, where $ \hbar\pd_{\hbar}(\Delta) \in \hbar^{-1}\tilde{P}$. 
 The quantised Lagrangian structure is then given  by the identity morphism $\tilde{P} \to \hbar Q\widehat{\Pol}$ with curvature $\Delta$.


\item 
Deformation quantisations  of $(-2)$-shifted  symplectic structures are formulated 
in \cite{DQ-2}. In this case, for any flat right connection $\nabla$ on a CDGA $A$, we take $\breve{L}$ to be the right de Rham complex $\DR^r(A,\nabla)[-1]$ of $A$  associated to $\nabla$, and set $W_m\breve{L}:= F_{p+1}\DR^r(A,\nabla)[-1]$ for the  (increasing) Hodge filtration $F$ on $\DR^r$. Since $\DR^r(A,\nabla)$ is a BV algebra, $\breve{L}$ is a DGLA, and then $\tilde{L}:=\widehat{\Rees}(\breve{L},W)$ is the DGLA $Q\widehat{\Pol}(A,\nabla,-2)[-1]$ of  $(-2)$-shifted polyvectors from \cite{DQ-2}.

Since  $\DR^r(A,\nabla)$ is a BV algebra, $\hbar Q\widehat{\Pol}(A,\nabla,-2)$ is a $BD_0$-algebra. The  Maurer--Cartan condition for an element $S$ of $Q\widehat{\Pol}(A,\nabla,-2)^0$ is then equivalent to the quantum master equation $(\delta + \nabla)(e^S)=0$, for $e^S \in 1+\hbar\DR^(A,\nabla)$, giving rise to a cohomology class $[e^S] \in \H^0\DR^r(A,\nabla)\brh$.
 
 However, in this setting there is no involution $*$ to give a notion of self-duality, and instead the potential  first order obstruction to quantisation is eliminated in \cite{DQ-2} by allowing $\nabla$ to vary, each non-degenerate Poisson structure having an essentially unique connection permitting quantisation. 
 
As in Examples \ref{poissonex}, every non-degenerate $(-2)$-shifted Poisson structure  admits an essentially unique formal derivation, but we cannot use   Corollary \ref{exactquantcor} to lift the formal derivation to the quantisation. Indeed, such a formal derivation would be undesirable, because it would give an element $D \in Q\widehat{\Pol}(A,\nabla,-2)^{-1}\subset \hbar^{-1}\DR^r(A,\nabla)^{-1}\brh$ with $\hbar\pd_{\hbar}e^S= (\delta +\nabla)(e^S D)$, implying that the virtual fundamental class $[e^S]$ equals $[1]$.
\end{enumerate}

\end{remark}

%
%
%
\subsection{Eliminating formal derivations}\label{DWLsn1} 
 
In $\Cx$-analytic and some $\C^{\infty}$ settings, there are local simplifications arising from the Poincar\'e lemma which make it possible to describe shifted symplectic structures and their Poisson and quantised counterparts in terms of exact shifted symplectic structures. We now summarise the method.

Writing $(-)^{\nondeg}$ for $(-)\by^h_{\mmc(F^2L)}\mmc(F^2L)^{\nondeg}$, we have the following.
\begin{proposition}\label{DWLprop} 
 If in the setup \S \ref{qcompequivsn}  we have an inclusion $ \iota\co U \into L$ of a central subalgebra, lifting to a morphism $\tilde{\iota} \co U \to \tilde{L}^{*=\id}$ with central image, then the
 canonical map 
  \begin{align*}
&\mmc(  \cocone(\tilde{\sigma}\eps \co \tilde{F}^2\tilde{L}^{*=\id} \to (\tilde{L}^{*=\id}/\tilde{\iota}U)\eps))^{\nondeg}\\
&\to \mmc(\cocone(\sigma\eps \co F^2L \to (L/\iota U)\eps))^{\nondeg},
  \end{align*}
  induced by the filtered quasi-isomorphism $\tilde{L}/\hbar\tilde{L} \simeq L$, is a weak equivalence. 
  
If there is moreover a quasi-isomorphism $j \co U \to V$, with $\iota  = \mu(-,\pi) \circ j$ for all $\pi$, then  there is a canonical weak equivalence
\[
\rho_{\tilde{L}} \co  \mmc( \tilde{F}^2\tilde{L}^{*=\id})^{\nondeg} \simeq \mmc(F^2L)^{\nondeg} \by \mmc(\hbar^2U\brhh),
\] 
independent of the quantised compatibility map $\tilde{\mu}$.

\end{proposition}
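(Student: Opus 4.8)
The plan is to exhibit $Y:=\mmc(\tilde F^2\tilde L^{*=\id})^{\nondeg}$ as a \emph{trivial} principal bundle over $\cP:=\mmc(F^2L)^{\nondeg}$ under the simplicial abelian group $A:=\mmc(\hbar^2 U\brhh)$, building the bundle structure and a trivialising section purely from $\tilde\iota$ and the first part of the Proposition, and invoking the quantised compatibility map $\tilde\mu$ only to check, fibre by fibre, that the resulting comparison is a weak equivalence. The $A$-action comes from the fact that, since $\tilde\iota$ has central image, the $\R\brhh$-linear extension $\hbar^2\tilde\iota\co\hbar^2 U\brhh\to\tilde L^{*=\id}$ is a morphism of DGLAs out of the abelian $\hbar^2 U\brhh$ landing, by the Lemma of \S\ref{quantsn} with $p=2$, in $\tilde F^2\tilde L^{*=\id}\cap\hbar\tilde L=\ker(\tilde F^2\tilde L^{*=\id}\to F^2L)$; hence $(\tilde\pi,c)\mapsto\tilde\pi+\hbar^2\tilde\iota(c)$ preserves the Maurer--Cartan condition (the corrections $[\tilde\pi,\hbar^2\tilde\iota(c)]$ and $[\hbar^2\tilde\iota(c),\hbar^2\tilde\iota(c)]$ vanish), lies over $\mmc(F^2L)$, and — non-degeneracy depending only on the class modulo $\hbar$ — restricts to an action of $A$ on $Y$ over $\cP$.

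Next I would produce a $\tilde\mu$-independent section of the Kan fibration $Y\to\cP$ (the canonical reduction of \S\ref{quantsn}). The first part of the Proposition gives a weak equivalence
\[
\mmc\bigl(\cocone(\tilde\sigma\eps\co\tilde F^2\tilde L^{*=\id}\to(\tilde L^{*=\id}/\tilde\iota U)\eps)\bigr)^{\nondeg}\ \simeq\ \mmc\bigl(\cocone(\sigma\eps\co F^2L\to(L/\iota U)\eps)\bigr)^{\nondeg},
\]
compatibly with the ``forget the derivation'' maps down to $\cP$. The right-hand space is itself weakly equivalent to $\cP$: its forgetful map to $\cP$ is a Kan fibration whose fibre over a non-degenerate $\pi$ is controlled, as in Lemma \ref{PDfibrelemma}, by the complex $(L/\iota U)^{\pi}=L^{\pi}/\iota U$; since $\iota=\mu(-,\pi)\circ j$ with $\mu(-,\pi)\co V\to L^{\pi}$ a filtered quasi-isomorphism, $(L/\iota U)^{\pi}\simeq\cocone(j\co U\to V)$, which is acyclic because $j$ is a quasi-isomorphism, so (the class $[\bar\sigma(\pi)]$ vanishing, $\bar\sigma(\pi)$ being a $\delta_\pi$-cocycle as $\delta_\pi\sigma(\pi)=0$) this fibre is non-empty and contractible. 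Composing, passing to a homotopy inverse, and following with the forgetful map to $Y$ yields a section $s\co\cP\to Y$, built only from $L,V,\mu,j,\tilde\iota$ and the canonical map of the first part.

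Finally I would assemble and verify. Consider $\Phi\co\cP\times A\to Y$, $(\pi,c)\mapsto s(\pi)+\hbar^2\tilde\iota(c)$; it is $A$-equivariant and lies over $\cP$. As $\cP\times A\to\cP$ and $Y\to\cP$ are Kan fibrations, it suffices to show $\Phi$ is a weak equivalence on the fibre over each non-degenerate $\pi$: there the source fibre is $A$, the target fibre is $\mmc(\ker^{s(\pi)})$ (using that $s(\pi)$ is a Maurer--Cartan lift of $\pi$), and $\Phi$ restricts to $\hbar^2\tilde\iota\co\mmc(\hbar^2 U\brhh)\to\mmc(\ker^{s(\pi)})$. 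This is a weak equivalence because $\hbar^2\tilde\iota\co\hbar^2 U\brhh\to\ker^{s(\pi)}$ is a quasi-isomorphism — and \emph{here} $\tilde\mu$ enters: the Lemma of \S\ref{quantsn} gives a quasi-isomorphism $\hbar^2(\tilde L^{*=\id})^{s(\pi)}\hookrightarrow\ker^{s(\pi)}$, the quantised compatibility $\tilde\mu(-,s(\pi))\co V\brhh\to(\tilde L^{*=\id})^{s(\pi)}$ is a filtered quasi-isomorphism, and the resulting comparison $\hbar^2 U\brhh\to\hbar^2 V\brhh$ has associated graded $j$ for the $\hbar^2$-adic filtration, hence is a quasi-isomorphism of complete complexes. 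Thus $\Phi$ is a weak equivalence, and $\rho_{\tilde L}$ is a homotopy inverse to it; since $s$ and the $A$-action are independent of $\tilde\mu$ (which featured only in this last verification), so is $\rho_{\tilde L}$.

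The main obstacle is the construction of $s$: one must show that $\mmc(\cocone(\sigma\eps\co F^2L\to(L/\iota U)\eps))^{\nondeg}$ has no moduli, i.e.\ that replacing $L$ by $L/\iota U$ — equivalently $V$ by the acyclic $V/jU\simeq\cocone(j)$ — destroys the obstruction class $[\sigma(\pi)]\in\H^1(L^\pi)$ to a formal derivation, so that every non-degenerate $\pi$ acquires an essentially unique formal derivation valued in $L/\iota U$, which is then transported across the first part of the Proposition; once this is in hand the remainder is formal principal-bundle theory together with routine $\hbar$-adic bookkeeping.
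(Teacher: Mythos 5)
Your argument for the second statement is essentially workable, but as a proof of the Proposition it has a genuine gap: you never prove the first statement, which you use as your sole input for constructing the section $s$ ("the first part of the Proposition gives a weak equivalence\dots"). That statement is part of what is being asserted, and it is not a formality: the paper proves it by running the $\hbar^2$-step tower from the proof of Corollary \ref{exactquantcor}, reducing modulo $(\hbar^{2n})$ and observing that the layers of the resulting towers are $\mmc$ of cocones of quasi-isomorphisms between the $\hbar^{2n}$-graded pieces (written there as $\mmc(\hbar^{2n}\cocone(V \to V)[1])\simeq 0$), so each stage and hence the limit is an equivalence; some such argument is needed before your $s$, and therefore $\Phi$ and $\rho_{\tilde{L}}$, exist at all. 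Two further points in your final verification should be made explicit rather than assumed: (i) the map of DGLAs $\tilde{F}^2\tilde{L}^{*=\id}\to F^2L$ is not surjective (it factors through the filtered quasi-isomorphism $\tilde{L}/\hbar\tilde{L}\simeq L$), so identifying the homotopy fibre of $\mmc(\tilde{F}^2\tilde{L}^{*=\id})\to\mmc(F^2L)$ over $\pi$ with $\mmc$ of the twisted kernel requires the Lemma of \S \ref{quantsn} in a $\delta+[s(\pi),-]$-twisted, complete-filtration form; and (ii) your claim that the comparison $\hbar^2U\brhh\to\hbar^2V\brhh$ has associated graded $j$ uses that $\tilde{\mu}$ reduces to $\mu$ modulo $\hbar$ — intended in \S \ref{qcompequivsn} but not stated by you — since that is exactly what promotes the hypothesis $\iota=\mu(-,\pi)\circ j$ to the quantised level.

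For the part you do prove, your route is close in skeleton to the paper's (the same action by the central subalgebra $\hbar^2\tilde{\iota}U\brhh$, and the same canonical section, the paper's $f_1\circ f_2^{-1}$), with two differences worth noting. First, you show $\mmc(\cocone(\sigma\eps \co F^2L \to (L/\iota U)\eps))^{\nondeg}\to\mmc(F^2L)^{\nondeg}$ is an equivalence directly on fibres via Lemma \ref{PDfibrelemma} and the acyclicity of $L^{\pi}/\iota U\simeq\cocone(j)$; the paper instead adapts Proposition \ref{exactcompprop} to compare with $\mmc(\cocone(F^2V\to V/U))^{\nondeg}$ and uses acyclicity of $V/U$ — your version is a perfectly good, slightly more direct alternative using only the classical $\mu$. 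Second, for the trivialisation itself the paper never invokes $\tilde{\mu}$: it uses the central extension $\hbar^2\tilde{\iota}U\brhh\to\tilde{F}^2\tilde{L}^{*=\id}\to\tilde{F}^2\tilde{L}^{*=\id}/\hbar^2\tilde{\iota}U\brhh$, shows the quotient maps to $F^2L$ by an equivalence on $\mmc(-)^{\nondeg}$ (graded pieces $\hbar^{2n}(L^{\pi}/\iota U)$, acyclic for non-degenerate $\pi$), and splits the resulting homotopy fibre sequence with the section, using that $\hbar^2U\brhh$ is the loop space of $\hbar^2U\brhh[1]$. Your fibrewise check via the filtered quasi-isomorphism $\tilde{\mu}(-,s(\pi))$ does not contradict the claimed $\tilde{\mu}$-independence (that concerns the construction of $\rho_{\tilde{L}}$, not the verification), but the paper's argument keeps even the proof free of $\tilde{\mu}$, which is cleaner and avoids the mod-$\hbar$ compatibility issue in (ii) above.
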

\begin{proof}
 The first statement follows by modifying the proof of Corollary \ref{exactquantcor}, since we still have homotopy fibre sequences
 \begin{align*}
&\mmc(  \cocone(\tilde{\sigma}\eps \co \tilde{F}^2(\tilde{L}/\hbar^{2n+2})^{*=\id} \to \tilde{L}^{*=\id}/(\tilde{\iota }U +(\hbar^{2n+2}))\eps))^{\nondeg}\\
&\to   \mmc(  \cocone(\tilde{\sigma}\eps \co \tilde{F}^2(\tilde{L}/\hbar^{2n})^{*=\id} \to \tilde{L}^{*=\id}/(\tilde{\iota }U +(\hbar^{2n}))\eps))^{\nondeg}\\
&\phantom{\to} \to \mmc(\hbar^{2n}\cocone(V \to V)[1])\simeq 0
 \end{align*}
for $n \ge 1$. 
 
 For the second statement, writing $(-)^{\nondeg}$ for $(-)\by^h_{\mmc(F^2L)}\mmc(F^2L)^{\nondeg}$, observe that the proof of Proposition \ref{exactcompprop} adapts to give an equivalence 
 \[
 \mmc(\cocone(F^2V \to (V/U)))^{\nondeg} \simeq \mmc(\cocone(\sigma\eps \co F^2L \to (L/\iota U)\eps))^{\nondeg},
 \]
 with the former equivalent to $\mmc(F^2V)^{\nondeg}$ (since $V/U$ is acyclic), and hence by hypothesis from \S \ref{compequivsn}, to $\mmc(F^2L)^{\nondeg}$. We thus have an equivalence  $\mmc(\cocone(\sigma\eps \co F^2L \to (L/\iota U)\eps))^{\nondeg} \to\mmc(F^2L)^{\nondeg}$. 
%

Combined with the first statement, this implies that the map marked $f_2$ in the commutative diagram below induces an equivalence on applying $\mmc(-)^{\nondeg}$.
\[
 \xymatrix{
\cocone( \tilde{F^2}\tilde{L}^{*=\id} \xra{\tilde{\sigma}\eps}  (\tilde{L}^{*=\id}/\tilde{\iota}U)\eps)  \ar[d]_{f_1} \ar[dr] \ar[drr]^{f_2}\\
 \tilde{F^2}\tilde{L}^{*=\id} \ar[r]^-{f_3} & \tilde{F}^2\tilde{L}^{*=\id}/\hbar^2\tilde{\iota}U\brhh \ar[r]_-{f_4} &  F^2L.}
\]
The proof of Proposition \ref{exactcompprop} also adapts to show that the map marked $f_4$ gives an  an equivalence on applying $\mmc(-)^{\nondeg}$, since filtering $\tilde{F}^2\tilde{L}^{*=\id}/\hbar^2\tilde{\iota}U\brhh$ by powers of $\hbar^2$ leads us to consider  graded pieces $\hbar^{2n}(L^{\pi}/\iota U)$ for $n \ge 1$, which are acyclic when $\pi$ is non-degenerate.

Since the bottom row $f_4 \circ f_3$ of the diagram is a central extension, it gives a homotopy fibre sequence
\[
  \mmc( \tilde{F}^2\tilde{L}^{*=\id})^{\nondeg} \to \mmc(F^2L)^{\nondeg}\to \mmc(\hbar^2U\brhh[1]).
\]
Because $f_2$ and $f_4$ are  equivalences, we have a canonical  homotopy section $f_1 \circ f_2^{-1}$ of $f_4 \circ f_3$, so the equivalence 
\[
 \mmc( \tilde{F}^2\tilde{L}^{*=\id})^{\nondeg} \simeq \mmc(F^2L)^{\nondeg}\by \mmc(\hbar^2U\brhh)
\]
follows because $\hbar^2U\brhh$ is the loop space of  $\hbar^2U\brhh[1]$. 
\end{proof}

 \begin{lemma}\label{DWLparamlemma}
 In the setting of Proposition \ref{DWLprop}, assume that  $\tilde{\iota} U \subset \hbar^{-1}\cW_{-1}\breve{L} $ and that  the map $\tilde{\mu}$ of \ref{qcompequivsn} also satisfies $\tilde{\mu}(\omega, \pi + \tilde{\iota} u)= \tilde{\mu}(\omega, \pi)$ for all $\omega \in V\brh$,  $\pi \in \tilde{L}$ and $u \in U\brh$.  
 
 Then if $\psi \co \mmc( \tilde{F}^2\tilde{L}^{*=\id})^{\nondeg} \simeq \mmc(F^2V \by \hbar^2V\brhh)^{\nondeg}$ denotes the equivalence of \S \ref{qcompequivsn},
  we have
\[
 \psi(\rho_{\tilde{L}}^{-1}( \pi,\hbar^2 u)) \simeq  \psi(\rho_{\tilde{L}}^{-1}( \pi,0)) + j\frac{\pd (\hbar u)}{\pd \hbar} \hbar^2.
\]
\end{lemma}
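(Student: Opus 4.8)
The plan is to make both equivalences in the statement explicit on a single Maurer--Cartan representative, and then to reduce the claim to a cochain-level exactness statement in $\tilde L^{*=\id}$ that is dispatched by the self-duality lemma of \S\ref{quantsn}.

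First I would identify the two sides. Put $\tilde\pi_0:=\rho_{\tilde L}^{-1}(\pi,0)$. In the construction of $\rho_{\tilde L}$ inside the proof of Proposition \ref{DWLprop}, the splitting of the central extension $f_4\circ f_3$ over $\pi$ is the translation of the basepoint $\tilde\pi_0$ along the kernel $\hbar^2\tilde\iota U\brhh$, the identification $\mmc(\hbar^2 U\brhh)\simeq\mmc(\hbar^2\tilde\iota U\brhh)$ being induced by $\tilde\iota$. Since $\tilde\iota U$ is central in $\tilde L$, the element $\tilde\pi:=\tilde\pi_0+\tilde\iota(\hbar^2 u)$ is again Maurer--Cartan and $*$-invariant, and it represents $\rho_{\tilde L}^{-1}(\pi,\hbar^2 u)$; it is non-degenerate because $\tilde\pi\equiv\pi\pmod\hbar$. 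Here $\tilde\iota(\hbar^2 u)\in\tilde F^2\tilde L^{*=\id}$: applying $\tilde\iota$ coefficientwise and using $\tilde\iota U\subset\hbar^{-1}\cW_{-1}\breve L$, one lands in $\prod_{k\ge1}\hbar^k W_k\breve L=\tilde F^2\tilde L$, while self-duality follows from $\tilde\iota(U)\subset\tilde L^{*=\id}$ and $(\hbar^2 v)^*=\hbar^2 v^*$.

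Next I would compute $\psi(\tilde\pi)$ by perturbing the compatibility datum of $\tilde\pi_0$. Recall that $\psi$ is the composite $\mmc(\tilde F^2\tilde L^{*=\id})^{\nondeg}\xla{\sim}\Comp(\tilde L^{*=\id},V\brhh,\tilde\sigma,\tilde\mu)^{\nondeg}\to\mmc(\tilde F^2(V\brhh))^{\nondeg}$, so that the symplectic value of any compatible pair over a non-degenerate element is homotopic to $\psi$ of that element. Fix a representative $(\omega^{(0)},\tilde\pi_0,h^{(0)})$ over $\tilde\pi_0$, so $\psi(\tilde\pi_0)\simeq\omega^{(0)}$. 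Two facts pass from $\tilde\pi_0$ to $\tilde\pi$: centrality of $\tilde\iota U$ gives $(\tilde L^{*=\id})^{\tilde\pi}=(\tilde L^{*=\id})^{\tilde\pi_0}$, and the stated invariance of $\tilde\mu$ under adding elements of $\tilde\iota U$ to its second argument gives $\tilde\mu(-,\tilde\pi)=\tilde\mu(-,\tilde\pi_0)$. A direct computation with $\tilde\sigma=\hbar\pd_\hbar$, in which the factor $\hbar^{-1}$ of $\tilde\iota U\subset\hbar^{-1}\cW_{-1}\breve L$ shifts the naive $\hbar\pd_\hbar$-weights $2i+2$ down to $2i+1$, gives $\tilde\sigma(\tilde\pi)-\tilde\sigma(\tilde\pi_0)=\tilde\sigma(\tilde\iota(\hbar^2 u))=\tilde\iota\!\left(\hbar^2\tfrac{\pd(\hbar u)}{\pd\hbar}\right)$. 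Setting $w:=\hbar^2\tfrac{\pd(\hbar u)}{\pd\hbar}\in\hbar^2 U\brhh$ and $e:=j(w)=j\tfrac{\pd(\hbar u)}{\pd\hbar}\hbar^2\in\hbar^2 V\brhh\subseteq\tilde F^2(V\brhh)$, it now suffices to produce a homotopy $h_\Delta$ between $\tilde\mu(e,\tilde\pi_0)$ and $\tilde\iota(w)$ in $(\tilde L^{*=\id})^{\tilde\pi_0}$: concatenating $h^{(0)}$ with $h_\Delta$ then exhibits $(\omega^{(0)}+e,\tilde\pi,\cdot)$ as a compatible pair over $\tilde\pi$, so that $\psi(\tilde\pi)\simeq\omega^{(0)}+e\simeq\psi(\tilde\pi_0)+j\tfrac{\pd(\hbar u)}{\pd\hbar}\hbar^2$, as required.

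The construction of $h_\Delta$ is the heart of the matter, and the step I expect to be the main obstacle: it is the identity $\tilde\mu(j(w),\tilde\pi_0)\simeq\tilde\iota(w)$ for $w\in\hbar^2 U\brhh$. I would prove that $\Delta:=\tilde\mu(j(w),\tilde\pi_0)-\tilde\iota(w)$ is $\delta_{\tilde\pi_0}$-exact in $\tilde F^2\tilde L^{*=\id}$ from three observations. (a) $\Delta$ is $\delta_{\tilde\pi_0}$-closed: $\tilde\mu(-,\tilde\pi_0)$ is a chain map $V\brhh\to(\tilde L^{*=\id})^{\tilde\pi_0}$, $j$ and $\tilde\iota$ are chain maps, $w$ is closed, and centrality of $\tilde\iota U$ makes $\tilde\iota(w)$ $\delta_{\tilde\pi_0}$-closed too. (b) $\Delta\in\tilde F^2\tilde L^{*=\id}$: $\tilde\mu$ preserves the filtration $\tilde F$, $\tilde\pi_0\in\tilde F^2\tilde L^{*=\id}$, $j(w)\in\hbar^2 V\brhh\subseteq\tilde F^2(V\brhh)$, and $\tilde\iota(w)\in\tilde F^2\tilde L^{*=\id}$ as in the first step. (c) $\Delta\in\hbar\tilde L$: since $w\in\hbar^2 U\brhh$, both $\tilde\mu(j(w),\tilde\pi_0)\in\hbar^2\tilde L$ (by $R\brhh$-linearity of $\tilde\mu$ and $j$ in the $V\brhh$-variable) and $\tilde\iota(w)\in\hbar\tilde L$. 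Granting (a)--(c), the lemma of \S\ref{quantsn} and its $*=-\id$ counterpart, applied to $\tilde L$ with the filtration-respecting differential $\delta_{\tilde\pi_0}=\delta+[\tilde\pi_0,-]$, give a quasi-isomorphism $\hbar^2\tilde F^0\tilde L^{*=\id}\xra{\sim}\tilde F^2\tilde L^{*=\id}\cap\hbar\tilde L$; hence $[\Delta]=[\hbar^2\Delta_2]$ in $\H^*(\tilde F^2\tilde L^{*=\id})$ for some $\delta_{\tilde\pi_0}$-closed $\Delta_2$, the nullhomotopy lying in $\tilde F^2\tilde L^{*=\id}$. Iterating and invoking $\hbar$-adic completeness of $\tilde F^2\tilde L^{*=\id}$ forces $[\Delta]=0$, which yields $h_\Delta$. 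Finally I would carry all of this out simplicially, over $\Omega^{\bt}(\Delta^{\bt})$ and functorially in $(\pi,u)$, so that the vertexwise argument upgrades to the asserted natural homotopy of maps of simplicial sets.
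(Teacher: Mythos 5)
Your first two steps coincide with the paper's proof: you represent $\rho_{\tilde{L}}^{-1}(\pi,\hbar^2u)$ by $\varpi+\hbar^2\tilde{\iota}u$ via the action of the central subalgebra, you compute $\tilde{\sigma}(\hbar^2\tilde{\iota}u)=\tilde{\iota}\frac{\pd(\hbar u)}{\pd\hbar}\hbar^2$ from $\tilde{\iota}U\subset\hbar^{-1}\cW_{-1}\breve{L}$, and you reduce, using the invariance of $\tilde{\mu}$ under adding $\tilde{\iota}U\brh$ to its second argument and linearity in the first, to the single identity $\tilde{\mu}(j(w),\varpi)\simeq\tilde{\iota}(w)$ for $w=\hbar^2\frac{\pd(\hbar u)}{\pd\hbar}$. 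Up to that point you are reproducing the paper's argument, which verifies the defining property $\tilde{\mu}(\psi(a),a)\simeq\tilde{\sigma}(a)$ directly.

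The gap is in your construction of $h_\Delta$. Knowing that $\Delta=\tilde{\mu}(j(w),\varpi)-\tilde{\iota}(w)$ is $\delta_{\varpi}$-closed and lies in $\tilde{F}^2\tilde{L}^{*=\id}\cap\hbar\tilde{L}$, together with the quasi-isomorphism $\hbar^2\tilde{F}^0\tilde{L}^{*=\id}\simeq\tilde{F}^2\tilde{L}^{*=\id}\cap\hbar\tilde{L}$ from the lemma of \S\ref{quantsn}, only tells you that $[\Delta]$ is carried by a class of the form $[\hbar^2\Delta_2]$; it does not make $[\Delta]$ vanish in $\H^*(\tilde{F}^2\tilde{L}^{*=\id})$, and the proposed iteration does not run: there is no reason for $[\Delta_2]$ to be divisible by $\hbar^2$ again, and $\hbar$-adic completeness cannot produce a nullhomotopy from mere divisibility of the ambient submodule. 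Indeed the principle you are invoking --- that every $\delta_{\varpi}$-closed element of $\tilde{F}^2\tilde{L}^{*=\id}\cap\hbar\tilde{L}$ is exact in $\tilde{F}^2\tilde{L}^{*=\id}$ --- is false: already for $\tilde{L}=\widehat{\Rees}(L,W)$ as in Remark \ref{Lbrhrmk}, an element $\hbar^2c$ with $c\in\cW_0L$ closed for $\delta_{\pi}$ but not exact is $*$-invariant, lies in $\hbar\tilde{L}\cap\tilde{F}^2$, and represents the nonzero class $\hbar^2[c]$ (nondegeneracy of $\pi$ identifies such classes with ordinary de Rham classes, which do not generally vanish). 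The identity $\tilde{\mu}(j(w),\varpi)\simeq\tilde{\iota}(w)$ is therefore not a formal consequence of centrality, filtrations and self-duality: it is exactly the quantised form of the relation $\iota=\mu(-,\pi)\circ j$ that is part of the setting of Proposition \ref{DWLprop} (and holds on the nose in Examples \ref{DWLex}, where $\tilde{\mu}$ on zero-forms is just multiplication by constants), and the paper's proof simply invokes it at this point rather than deriving it. If you replace your cohomological argument for $h_\Delta$ by an appeal to that relation, extended $\hbar$-linearly, the remainder of your reduction closes up and agrees with the paper's proof.
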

\begin{proof}
Writing $\varpi:= \rho_{\tilde{L}}^{-1}( \pi,0)$,   by construction    we have $\rho_{\tilde{L}}^{-1}( \pi,\hbar^2u) \simeq \varpi + \hbar^2\tilde{\iota}  u$, since the parametrisation is defined by the action of the central subalgebra $\hbar^2\tilde{\iota} U\brhh$. 
Because $\tilde{\iota} U \subset \hbar^{-1}\cW_{-1}\breve{L}$, we have $\tilde{\sigma}(\hbar \tilde{\iota}u) = (\hbar\tilde{\iota}) \hbar \frac{\pd u}{\pd \hbar}= 
\tilde{\iota}\frac{\pd u}{\pd \hbar} \hbar^2$ for $u \in U \brh$. 

The map $\psi$ is defined by the property that  $\tilde{\mu}(\psi(a),a) \simeq \tilde{\sigma}(a)$, so we wish to show 
\begin{align*}
\tilde{\mu}\left(\psi(\varpi)+ j\frac{\pd (\hbar u)}{\pd \hbar} \hbar^2,\varpi + \hbar^2\tilde{\iota}  u  \right) &= \tilde{\sigma}(\varpi + \hbar^2\tilde{\iota}  u).
\end{align*}
The hypothesis on $\tilde{\mu}$ reduces the left-hand side to $\tilde{\mu}(\psi(\varpi),\varpi)  + \tilde{\iota}\frac{\pd (\hbar u)}{\pd \hbar} \hbar^2 $, as required. 
%
%
\end{proof}

\begin{corollary}\label{DWLcor1}
 In the setup of Proposition \ref{DWLprop}, any two filtered sections $s$ of $\tilde{L}^{*=\id} \to L$ for which $s \circ \iota = \tilde{\iota}$ 
 induce homotopic maps $\mmc(F^2L)^{\nondeg}\to  \mmc( \tilde{F}^2\tilde{L}^{*=\id})^{\nondeg}$ and $\mmc(F^2L\by \hbar^2L\brhh)^{\nondeg}\simeq  \mmc( \tilde{F}^2\tilde{L}^{*=\id})^{\nondeg}$.
 \end{corollary}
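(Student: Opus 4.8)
The plan is to show that \emph{every} filtered section $s$ as in the statement induces, after applying $\mmc(-)^{\nondeg}$, the same map as the canonical one $f_1\circ f_2^{-1}$ occurring in the proof of Proposition~\ref{DWLprop}; since the latter manifestly does not depend on $s$, this proves the corollary, and transporting $f_1\circ f_2^{-1}$ through the equivalence $\rho_{\tilde L}$ of that proposition --- under which it becomes the zero section $\mmc(F^2L)^{\nondeg}\to\mmc(F^2L)^{\nondeg}\by\mmc(\hbar^2U\brhh)$ --- then also handles the second asserted map. First I would record that a filtered section $s\co L\to\tilde{L}^{*=\id}$, namely a filtered $L_\infty$-morphism splitting the reduction $\tilde{L}^{*=\id}\to L$ with $s\circ\iota=\tilde\iota$, carries $F^2L$ into $\tilde{F}^2\tilde{L}^{*=\id}$, sends Maurer--Cartan elements to Maurer--Cartan elements, and preserves non-degeneracy, hence induces a map $s_+\co\mmc(F^2L)^{\nondeg}\to\mmc(\tilde{F}^2\tilde{L}^{*=\id})^{\nondeg}$; it is $s_+$ that must be identified. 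For the second map one runs the identical argument with $L$ replaced by the DGLA $L\brh$ of Remark~\ref{Lbrhrmk}, whose associated $\tilde{L}$ is $\widehat{\Rees}(L,W)$ and whose fixed subalgebra is $F^2L\by\hbar^2L\brhh$, using that $s$ extends $\hbar$-adically to a section of $\tilde{L}\to\widehat{\Rees}(L,W)$ so that the two equivalences of the corollary are identified.

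The heart of the argument is a lifting of $s_+$, up to homotopy, through the map
\[
 f_1\co\mmc\bigl(\cocone(\tilde\sigma\eps\co\tilde{F}^2\tilde{L}^{*=\id}\to(\tilde{L}^{*=\id}/\tilde\iota U)\eps)\bigr)^{\nondeg}\to\mmc(\tilde{F}^2\tilde{L}^{*=\id})^{\nondeg}
\]
from the proof of Proposition~\ref{DWLprop}. I would do this by showing that the fibre of $f_1$ over any non-degenerate point is contractible, so that $f_1$ is a trivial Kan fibration there and $s_+$ lifts coherently to a map $\bar s$ with $f_1\circ\bar s\simeq s_+$. Concretely, the fibre of $f_1$ over $s\alpha$ is the space of nullhomotopies of $\tilde\sigma(s\alpha)$ in the $s\alpha$-twisted complex $\tilde{L}^{*=\id}/\tilde\iota U$; reducing modulo $\hbar$ this becomes the space of nullhomotopies of $\sigma(\alpha)$ in $L^{\alpha}/\iota U$, which is nonempty and connected by the equivalence $\mmc(\cocone(\sigma\eps\co F^2L\to(L/\iota U)\eps))^{\nondeg}\simeq\mmc(F^2L)^{\nondeg}$ established inside that proof. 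Moreover $L^{\alpha}/\iota U$ is acyclic when $\alpha$ is non-degenerate, because $\iota=\mu(-,\alpha)\circ j$ with $j$ and $\mu(-,\alpha)$ filtered quasi-isomorphisms, so $\iota U\hookrightarrow L^{\alpha}$ is a quasi-isomorphism, and the associated graded pieces $\hbar^{n}(L^{\alpha}/\iota U)$ for $n\ge1$ are acyclic for the same reason; the usual filtration-tower obstruction argument then upgrades ``nonempty and connected'' to ``contractible''.

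To finish, I would note that $f_2=f_4\circ f_3\circ f_1$ and $f_1\circ\bar s\simeq s_+$, so $f_2\circ\bar s\simeq(f_4\circ f_3)\circ s_+$; by the construction of $f_4$ in the proof of Proposition~\ref{DWLprop}, $f_4\circ f_3$ agrees on the non-degenerate locus with the reduction map $\mmc(\tilde{F}^2\tilde{L}^{*=\id})^{\nondeg}\to\mmc(F^2L)^{\nondeg}$, and since $s$ is a section of that reduction we get $f_2\circ\bar s\simeq\id_{\mmc(F^2L)^{\nondeg}}$. As $f_2$ is a weak equivalence (shown in that proof), it follows that $\bar s\simeq f_2^{-1}$, and applying $f_1$ yields $s_+=f_1\circ\bar s\simeq f_1\circ f_2^{-1}$, which is independent of $s$. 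Composing with $\rho_{\tilde L}$, and likewise for the $L\brh$-variant, gives the two homotopies asserted in the corollary.

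The step I expect to be the main obstacle is the contractibility of the fibre of $f_1$ over the non-degenerate locus: this is precisely where non-degeneracy is used, and proving it requires re-running the obstruction theory up the filtration $\tilde{F}$ exactly as in Proposition~\ref{DWLprop}, together with enough care that the chosen nullhomotopies assemble into an honest map of simplicial sets rather than a level-wise construction --- which is, however, automatic once $f_1$ is recognised as a trivial fibration of Kan complexes over the non-degenerate part.
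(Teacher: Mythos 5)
Your strategy hinges on the claim that the homotopy fibre of $f_1 \co \mmc(\cocone(\tilde\sigma\eps \co \tilde{F}^2\tilde{L}^{*=\id} \to (\tilde{L}^{*=\id}/\tilde\iota U)\eps))^{\nondeg} \to \mmc(\tilde{F}^2\tilde{L}^{*=\id})^{\nondeg}$ over a non-degenerate point is contractible, so that $s_+$ lifts through $f_1$. That claim is false, and the supporting computation is where it goes wrong: $\tilde\iota U$ is a single copy of $U$ inside $\tilde{L}^{*=\id}$, not $U\brh$, so the $\hbar$-adic associated graded pieces of the twisted complex $(\tilde{L}^{*=\id}/\tilde\iota U)^{s\alpha}$ are \emph{not} $\hbar^n(L^{\alpha}/\iota U)$ for $n\ge 1$; only the $n=0$ piece receives the quotient, while the higher pieces are copies of (eigenparts of) $L^{\alpha}\simeq V$, which are not acyclic. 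This is visible in the proof of Proposition \ref{DWLprop} itself: the graded pieces that appear there are $\hbar^{2n}\cocone(V\to V)[1]$, i.e.\ acyclicity only arises from pairing the $\tilde{F}^2$-source with the target in the cocone, never from the target $\tilde{L}^{*=\id}/\tilde\iota U$ alone, which is what controls the fibre of $f_1$ via Lemma \ref{PDfibrelemma}. Indeed, under the equivalences $f_2$ and $\rho_{\tilde L}$ of that proposition, $f_1$ \emph{is} the zero section $\mmc(F^2L)^{\nondeg}\to \mmc(F^2L)^{\nondeg}\by\mmc(\hbar^2U\brhh)$, whose homotopy fibres are (when non-empty) torsors over $\Omega\,\mmc(\hbar^2U\brhh)$, with homotopy groups $\hbar^2\H^{1-i}(U)\brhh$ — non-trivial in exactly the situations of interest (this factor is the whole parametrising torsor, e.g.\ $\H^2(M,\hbar\R\brhh)$ in the Fedosov example). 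Worse, producing a homotopy lift of $s_+$ through $f_1$ is essentially equivalent to the statement being proved (that $s_+$ lands in the zero section up to homotopy), so even repaired, the argument as structured would beg the question.

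The missing idea is to use the hypothesis $s\circ\iota=\tilde\iota$ to produce the lift \emph{strictly} rather than by an obstruction argument: because $s$ carries $\iota U$ to $\tilde\iota U$, it induces a section $s'$ of the map of cocones $h' \co \cocone(\tilde{F}^2\tilde{L}^{*=\id}\xra{\tilde\sigma\eps}(\tilde{L}^{*=\id}/\tilde\iota U)\eps) \to \cocone(F^2L\xra{\sigma\eps}(L/\iota U)\eps)$ with $v_1\circ s'=s\circ v_2$ for the two projections $v_1,v_2$. Since $h'$ and $v_2$ induce weak equivalences on $\mmc(-)^{\nondeg}$ (Proposition \ref{DWLprop} and its proof), $s'\simeq (h')^{-1}$ there, so $s\simeq v_1\circ s'\circ v_2^{-1}\simeq v_1\circ (h')^{-1}\circ v_2^{-1}$, which is the zero section of $\rho_{\tilde L}$ and manifestly independent of $s$; the second statement then follows by equivariance under the $\mmc(\hbar^2U\brhh)$-action rather than by rerunning the argument for $L\brh$. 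Your closing step identifying $f_4\circ f_3\circ\bar s$ with the identity is fine in itself, but it rests on the lift $\bar s$ that your fibre computation cannot supply.
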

\begin{proof}
It suffices to show that first the map in question is homotopic to  the zero section of $\rho_{\tilde{L}}$, which implies that the second map  must be homotopic to $  \rho_{\tilde{L}}^{-1} \circ\rho_{L\brh}$, since everything is equivariant with respect to the action of $ \mmc(\hbar^2U\brhh)$.

Since the filtered section $s$ of $h$ in the diagram below  preserves $U$, it also induces a section $s'$ of the other  horizontal map $h'$, with $v_1 \circ s'= s \circ v_2$.  
\[
 \begin{CD}
  \cocone( \tilde{F^2}\tilde{L}^{*=\id} \xra{\tilde{\sigma}\eps}  (\tilde{L}^{*=\id}/\tilde{\iota}U)\eps)@>{h'}>> \cocone(F^2L \xra{\sigma\eps} (L/\iota U)\eps)\\
 @V{v_1}VV @VV{v_2}V \\
 \tilde{F}^2\tilde{L}^{*=\id} @>{h}>> F^2L
 \end{CD}
\]
The maps $h'$ and $v_2$ both induce weak equivalences on $\mmc(-)^{\nondeg}$ (by Proposition \ref{DWLprop} and its proof, respectively), with $\rho_{\tilde{L}}$ corresponding to $v_1 \circ (h')^{-1} \circ (v_2)^{-1}$ in the homotopy category. Thus $\rho_{\tilde{L}}$ is given up to homotopy by $v_1 \circ s' \circ (v_2)^{-1} \simeq s$.
\end{proof}

\begin{examples}\label{DWLex}
The main sources of examples to which Corollary \ref{DWLcor1} can be applied are 
$\Cx$-analytic and   $\C^{\infty}$ settings where a de Rham theorem holds. That condition holds in the $\Cx$-analytic case  by \cite[Theorem IV.1.1]{hartshorneDRCoho}, and is known as being algebraically locally acyclic (ala) in  \cite{taroyanDRDerivedDG},  with 
\cite[Proposition 4.11 and Theorem 5.12]{taroyanDRDerivedDG} giving a large class of $\C^{\infty}$ examples.


The  equivalence between derived de Rham and Betti cohomology means that when $V=\oL\DR(\sO_X)[m]$ we can take $U[-m]$ to be the base field $\bK$  locally. There are natural central subalgebras $\hbar^{-1}\bK \subset Q\widehat{\Pol}(\sO_X,n)$ and  $\hbar^{-1}\bK\brh \subset Q\widehat{\Pol}(\sO_X,n)$ in all cases, ensuring the conditions of Proposition \ref{DWLprop} are satisfied.

The space $\mmc(  \cocone(\tilde{\sigma}\eps \co \tilde{F}^2\tilde{L}^{*=\id} \to (\tilde{L}^{*=\id}/\tilde{\iota }U)\eps))^{\nondeg}$ of preferred quantisations from that proposition then corresponds to self-dual quantisations $\sA$ equipped with an $\hbar\pd_{\hbar}$-derivation $D$, but  with curvature in $\hbar^{-1}(\sA/\bK)$ (rather than $\hbar^{-1}\sA$ as in Examples \ref{quantex}).
Specifically:

\begin{enumerate}[wide, labelindent=0pt, itemsep=6pt] 
 \item For $n>0$, a choice of formality equivalence  $E_{n+1}\simeq P_{n+1}$ intertwining the respective involutions  gives an equivalence  $Q\widehat{\Pol}(-,n)\simeq \widehat{\Pol}(-,n)\brh$ of the respective centres, acting as multiplication by $\hbar$ on   $\hbar^{-1}\bK\brh[n+1] \subset Q\widehat{\Pol}(-,n)$. For $n>1$, such formality equivalences as Hopf dg operads are all homotopic, by \cite{FresseWillwacherIntrinsic}, so Corollary \ref{DWLcor1} does not add much.
 
For $n=1$, such formality equivalences are not homotopic, and are given by even $\bK$-linear  associators $w$. For an ala homogeneous $\C^{\infty}$ or $\Cx$-analytic derived stack $\fX$,  
Corollary \ref{DWLcor1} (with  $U=\oR\Gamma(\fX,\bK)[n+1]$)   then implies that the resulting map  $s_w \co \cP(\fX,n) \to Q\cP(\fX,n)^{sd}$ to self-dual (weak) quantisations  becomes independent of the choice of associator on restriction to the space   $\cP(\fX,n)^{\nondeg}\simeq \Sp(\fX,n)$ of non-degenerate Poisson structures. The map $s_w$ corresponds to the deformation quantisation considered in \cite{CPTVV}.
 
 Proposition \ref{DWLprop} also gives a parametrisation 
 \[
 Q\cP(\fX,n)^{sd,\nondeg} \simeq \Sp(\fX,n) \by \mmc(\oR\Gamma(\fX,\hbar^2\bK\brhh)[n+1]),
 \] 
 which  Corollary \ref{DWLcor1} and Lemma \ref{DWLparamlemma} imply must correspond under the transformation $(\omega, \hbar^2f(\hbar)) \mapsto (\omega, \hbar^2 \frac{\pd (\hbar f)}{\pd \hbar}(\hbar))$ to the parametrisation $Q\cP(\fX,n)^{sd,\nondeg} \simeq \Sp(\fX,n) \by \mmc(\hbar^2\DR(\fX)\brhh[n+1])$ arising from any even associator by extending the Legendre transformation of \cite{poisson} as in Remark \ref{Lbrhrmk}.
 
  \item For $n>1$, degeneration of a filtration on  the Swiss cheese operad as in \cite{MelaniSafronovII} gives an equivalence $Q\widehat{\Pol}(-,-;n)\simeq \widehat{\Pol}(-,-;n)\brh$ of the respective centres, so the same reasoning shows that for a morphism $\fX \to \fY$ of ala homogeneous $\C^{\infty}$ or $\Cx$-analytic derived stacks, 
 the choice of even associator $w$ does not affect the quantisation map  $s_w \co \Lag(\fY,\fX;n) \to Q\cP(\fY,\fX;n)^{sd}$ from $n$-shifted Lagrangian structures to self-dual $n$-shifted (weakly)  quantised co-isotropic structures, or the corresponding parametrisation of self-dual quantisations $Q\cP(\fY,\fX;n)^{sd,\nondeg}$ as
 \[
  \Lag(\fY,\fX;n) \by \mmc\left(\cocone(\oR\Gamma(\fY,\hbar^2\bK\brhh) \to \oR\Gamma(\fX,\hbar^2\bK\brhh))[n+1]\right),
 \]
 
 \item For $n=0$, \cite{DQpoisson} uses formality of the $E_2$-operad to associate to any even associator an involutive equivalence $Q\widehat{\Pol}(-,0)\simeq \widehat{\Pol}(-,0)\brh$  when the cotangent complex is perfect, and similarly for $1$-shifted co-isotropic structures. For non-degenerate structures on a homogeneous  $\Cx$-analytic, or algebraically locally acyclic $\C^{\infty}$, derived $\infty$-stack,  
 Corollary \ref{DWLcor1} implies that the resulting parametrisation 
 \[
  Q\cP(\fX,0)^{sd,\nondeg} \simeq \Sp(\fX,0) \by \mmc(\oR\Gamma(\fX,\hbar^2\bK\brhh)[1]), 
 \]
of  non-degenerate self-dual quantisations is independent of the choice of associator, by the reasoning above. By Lemma \ref{DWLparamlemma}, a parameter  $(\omega, \hbar^{2}f(\hbar))$ here corresponds to the parameter $(\omega, \hbar^2 \frac{\pd (\hbar f)}{\pd \hbar}(\hbar))$ 
 in the parametrisation of \cite[Theorem 3.13]{DQnonneg}.
  
 \begin{enumerate}[wide]
\item\label{fedosovex}  
    For real symplectic manifolds $M$, the approach of Proposition \ref{DWLprop} agrees with that of \cite{DeWildeLecomte}, 
 as described in \cite[\S 4]{deligneDefFnsSymplectic}, so  Corollary \ref{DWLcor1} implies that the preferred quantisation given by any even associator yields  De Wilde and Lecomte's quantisation, and hence also Fedosov's canonical quantisation \cite{fedosov},
  by \cite[Proposition 4.12]{deligneDefFnsSymplectic}. 
 
The action of  $\hbar^{-1}\oR\Gamma(M,\R)[1]$ on the space of quantisations in Proposition \ref{DWLprop}  then corresponds precisely to the description \cite[\S 2.3 and Proposition 2.6]{deligneDefFnsSymplectic}, for $\pi$ non-degenerate, of     $\pi_0(Q\cP(M,0)_{\pi})$ as a torsor for $\H^2(M, \R\brh)$, except that we are restricting to fixed points of the involution to characterise $\pi_0(Q\cP(M,0)_{\pi}^{sd})$ as a torsor for $\H^2(M, \hbar\R\brhh)$, by identifying $\exp(\hbar\R\brhh)$ with  $\{ a \in \R\brh^+ ~:~ a(\hbar)a(-\hbar)=1\}$. 
 The parameter $(\omega, \hbar^2f(\hbar^2))$ of a self-dual quantisation given by Proposition \ref{DWLprop} thus corresponds to $\hbar^{-1}\omega +\hbar f(\hbar^2)$ in  Fedosov's parametrisation \cite{fedosov} as described in \cite[\S 3.7]{deligneDefFnsSymplectic}.\footnote{The parametrisation in those papers of non-self-dual structures in those terms is possible because $\H^1(M,\sO_M)$ and $\H^2(M,\sO_M)$ both vanish, so the morphism $\tilde{F}^2 Q\widehat{\Pol}(\sO_M,0) \to \tilde{F}^2 Q\widehat{\Pol}(\sO_M,0) \oplus \sO_M[1]$ induces an isomorphism on $\pi_0\mmc$. Corollary \ref{DWLprop} then adapts to describe that  space as a torsor for $\H^2(M, \hbar\R\brh)$.}
 
\item  For complex symplectic manifolds $(X,\omega)$, our preferred quantisation from Proposition \ref{DWLprop} is given by the DQ algebroid  $\mathsf{W}_X$ of \cite{PoleselloSchapira}. This follows because \cite[Appendix A.1]{DAgnoloKashiwaraQuantnCxSympMfds} gives $\mathsf{W}_X$ the same characterisation as the unique anti-involutive quantisation carrying an $\hbar\pd_{\hbar}$-derivation. 

In this setting, curvature vanishes for degree reasons, so the distinction between the constructions of Corollary \ref{exactquantcor} and Proposition \ref{DWLprop} arises locally in the $1$-morphisms as described at the end of Example \ref{quantex}.\ref{quantex0}, with $\gamma$ lying in $\hbar^{-1}(\mathsf{W}_X/\Cx)$ rather than $\hbar^{-1}\mathsf{W}_X $. The class $[\omega] \in \H^2(X,\Cx)$ measures the obstruction to lifting the associated gluing cocycle in $\mathsf{W}_X/\Cx$ to one in $ \mathsf{W}_X$.
\end{enumerate}


\end{enumerate}
 
\end{examples}

\begin{remark}
 In \cite{DQnonneg}, a parametrisation similar to that  of Proposition \ref{DWLprop} was established for $0$-shifted symplectic structures in the algebraic setting, using de Rham rather than Betti cohomology, and dependent on a choice of even associator. The formality equivalence given by the associator is a key ingredient in constructing  the resulting Legendre transformation, but it should be possible to establish an algebraic analogue of Proposition \ref{DWLprop} by working with relative (quantised) polyvectors over the de Rham stack. 
\end{remark}

\begin{remark}\label{GTrmk}
In general, understanding the dependence on an even associator $w$ of  the  involutive equivalence 
 $\alpha_w \co Q\widehat{\Pol}(-,0)\simeq \widehat{\Pol}(-,0)\brh$ from \cite{DQpoisson} amounts to understanding actions of the pro-unipotent Grothendieck--Teichm\"uller group $\GT^1$ and its graded analogue $\mathrm{GRT}^1$. The set of associators (equivalently of Levi decompositions of $\GT$) is a torsor for both $\GT^1$ and $\mathrm{GRT}^1$, with even associators corresponding to torsors for the subgroups $(\GT^1)^P$ and $(\mathrm{GRT}^1)^P$ commuting with the canonical element $P \in \GT(\Q)$ of order $2$.
 
The rigidity property of $\widehat{\Pol}(-,0)$ established in \cite[Corollary \ref{DQpoisson-fildefcor1}]{DQpoisson} implies that for stacks with perfect cotangent complexes,   $(\mathrm{GRT}^1)^P$ acts as a group of $L_{\infty}$-automorphisms of $\widehat{\Pol}(-,0)\brhh$, acting trivially modulo $\hbar^2$ and fixing $\bK\brhh$. Poisson data which quantise independently of $w$ are then  homotopy fixed points of the $(\mathrm{GRT}^1)^P$-action on  
\[
\mmc(\tilde{F}^2 \widehat{\Pol}(-,0)\brhh)= \mmc(F^2\widehat{\Pol}(-,0) \by \hbar^2\widehat{\Pol}(-,0) \brhh).
\]

In the setting of Remark \ref{Lbrhrmk}, Proposition \ref{DWLprop} constructs a space over $\mmc(\tilde{F}^2 \widehat{\Pol}(-,0)\brhh)$, equivalent to the zero section $\mmc(F^2\widehat{\Pol}(-,0))^{\nondeg}$, to which the $(\mathrm{GRT}^1)^P$-action lifts. That   implies that  non-degenerate $\hbar$-constant structures form a space of homotopy fixed points of $(\mathrm{GRT}^1)^P$ inside $\mmc(\tilde{F}^2 \widehat{\Pol}(-,0)\brhh)$, giving an alternative perspective on the first part of Corollary \ref{DWLcor1} in this case. The $\bK\brhh$-torsor characterisation of the second part of the corollary then implies that the $(\mathrm{GRT}^1)^P$-action on the whole of  $\mmc(\tilde{F}^2 \widehat{\Pol}(-,0)\brhh)^{\nondeg}$ is homotopy trivial.

%
 
\end{remark}

\section{Interpretation as algebras}\label{interpretnsn}

\subsection{Shifted Poisson algebras with formal derivation}\label{interpretalgsn}

The simplest example to consider is  the setting of Example \ref{poissonex}.(\ref{poissonexalg}), when $A$ is a cofibrant\footnote{or just quasi-smooth in the original sense, i.e. cofibrant over ind-smooth}  $R$-CDGA  so $(L[-n-1],F)$ is the cochain complex of $n$-shifted multiderivations  on $A$, given by
\[
F^p \widehat{\Pol}(A,n):= \prod_{i \ge p}\HHom_A(\mathrm{CoSymm}^i_A((\Omega^1_{A/R})[n+1]),A), 
\]
and $L$ is equipped with the Schouten--Nijenhuis bracket $[-,-]$, which extends the
 Lie bracket on $\Hom_A(\Omega^1_{A/R},A)$ multiplicatively. The derivation $\sigma$ is then defined to act as multiplication by $(p-1)$ on $p$-derivations $\HHom_A(\mathrm{CoSymm}^{p}_A((\Omega^1_{A/R})[n+1]),A)$

Thus a Maurer--Cartan element $\pi \in \mc(F^2L)$ consists of $p$-derivations $\pi_p$ of chain degree $(p-1)(n+1)-1$ for all $p \ge 2$, such that the data $(\delta,\pi)$ gives $A[n+1]$ the structure of an $L_{\infty}$-algebra. A Maurer--Cartan element $\pi+D\eps \in \mc(\cocone(\sigma\eps \co F^2L \to L\eps))$ lying over $\pi \in \mc(L)$ is then given by $p$-derivations $D_p$ of chain degree $(p-1)(n+1)$ for all $p \ge 0$, such that $\delta D + [\pi,D] =-\sigma(\pi)$, so 
\[
 \delta D_p + \sum_i [\pi_i,D_{p+1-i}] = -(p-1)\pi_p.
\]

Writing $\cP(A,n):= \mmc(F^2L)$ and $\cP^D(A,n):= \mmc(\cocone(\sigma\eps \co F^2L \to L\eps))$,  Lemma \ref{PDfibrelemma} implies that the homotopy fibre of $\cP^D(A,n) \to \cP(A,n)$ over $\pi$ is non-empty if and only if $[\sigma(\pi)]= 0$ in the $(n+2)$th Poisson cohomology group $\H^{n+2}_{\pi}(A)$, and then its $i$th homotopy group is a torsor for $\H^{n+1-i}_{\pi}(A)$, where  $\H^*_{\pi}(A):= \H^*(\widehat{\Pol}(A,n), \delta + [\pi,-])$.  

\subsubsection{Strict structures}\label{strictsn}

The data $D$ are most easily understood when both $\pi$ and $D$ are strict, in the sense that $\pi=\pi_2$, a bivector, and $D=D_0+D_1$, the sum of a constant and a vector. The condition on $D$ then simplifies to
\[
\delta D_0 =0,  \quad  \delta D_1 + [\pi,D_0] =0, \quad [\pi,D_1] =-\pi;
\]
for the Lie bracket $\{-,-\}_{\pi}$ defined by $\pi$,
the first condition says that $D_0$ is closed,  the second that $D_1$ is a homotopy from $0$ to $\{D_0,-\}_{\pi}$, and the third that $D_1\{a,b\}_{\pi}=\{D_1a,b\}_{\pi} \pm \{a, D_1b\}_{\pi} - \{a,b\}_{\pi}$.

\begin{definition}
 Given a dg operad $\cQ$, define $\cQ[n]$ to be the operad $(\cQ[n])(i):= \cQ(i)[n(i-1)]$; this is often denoted $s^{-n}\cQ$.
\end{definition}

Every shifted Poisson algebra with formal derivation is weakly equivalent to a strict one, with the following reasoning. As in \cite[Remark 1.6]{poisson}, an element $\pi \in \cP(A,n)$ gives $A$ the structure of a $\Com \circ (L_{\infty}[-n])$-algebra  (via a distributive law, cf. \cite[\S 8.6]{lodayvalletteoperads})
extending the given $\Com$-algebra structure on $A$. The bar-cobar adjunction as in \cite[\S 11.3]{lodayvalletteoperads} then gives a cofibrant  $P_{n+1}= \Com \circ (\Lie[-n])$-algebra $(A',\pi')$ equipped with a quasi-isomorphism $ (A',\pi') \to (A,\pi)$; in particular, $\pi'=\pi'_2$ is a strict $n$-shifted Poisson structure. Since $(A',\pi')$ is cofibrant as a $P_{n+1}$-algebra, inclusion gives a quasi-isomorphism
\[
 \DDer_{P_{n+1}}(A'_{\pi'},A'_{\pi'})[-n-1] \into  (F^1 \widehat{\Pol}(A',n), \delta + [\pi',-]),
\]
where the former is the subcomplex of strict $P_{n+1}$-algebra derivations, given by
\begin{align*}
&\ker( [\pi',-]\co \HHom_{A'}(\Omega^1_{A'/R}[n+1],A')\to \HHom_{A'}(\mathrm{CoSymm}^2_{A'}((\Omega^1_{A'/R})[n+1]),A')).
\end{align*}
Thus  inclusion gives a quasi-isomorphism
\[
 \cocone(A' \xra{[\pi',-]} \DDer_{P_{n+1}}(A'_{\pi'},A'_{\pi'})[-n])\into  (\widehat{\Pol}(A',n), \delta + [\pi',-]),
\]
so Lemma \ref{PDfibrelemma} implies that the homotopy fibre of $\cP^D(A',n) \to \cP(A',n)$ over the strict structure $\pi'$ is weakly equivalent to its subspace of strict derivations $D'$ with $D'_i=0$ for all $i \ge 2$.

\subsubsection{Formal derivations}\label{formalderivationsn}

Applying Corollary \ref{exactquantcor} to  the DGLA $L\brh \cong \widehat{\Rees}(L,W)$ of Remark \ref{Lbrhrmk} gives a weak  equivalence 
\begin{align*}
 &\mmc\left(\cocone((\sigma+ \hbar\pd_{\hbar}) \eps \co F^2L\by \hbar^2L\brhh \to L\brhh\eps)\right)^{\nondeg}\\
 &\xra{\sim}
\mmc\left(\cocone(\sigma\eps \co F^2L \to L\eps)\right)^{\nondeg},
\end{align*}
and this has a section given by inclusion of terms constant in $\hbar$.

Elements of  $\mc(\cocone((\sigma+ \hbar\pd_{\hbar}) \eps \co F^2L\by \hbar^2L\brhh \to L\brhh\eps))$ take the form $\pi + D\eps$, for $\pi \in \mc(F^2L\by \hbar^2L\brhh)$ and $D \in L^0\brhh $ satisfying $\delta D +[\pi,D]= -(\sigma+ \hbar\pd_{\hbar}) \pi$.
%
The isomorphism  $(-)_{\hbar} \co L\brh \to \widehat{\Rees}(L,W) $ from Remark \ref{Lbrhrmk}, which multiplies $m$-vectors by $\hbar^{m-1}$,  induces an isomorphism 
\begin{align*}
&\mmc(\cocone((\sigma+ \hbar\pd_{\hbar}) \eps \co F^2L\by \hbar^2L\brhh \to L\brhh\eps))\\
&\cong \mmc(\cocone(\hbar\pd_{\hbar} \eps \co \tilde{F}^2\widehat{\Rees}(L,W) \to \widehat{\Rees}(L,W)\eps)^{*=\id}).
\end{align*}

Thus $\sD_{\hbar}:= D_{\hbar} +  \hbar\pd_{\hbar}$ is a curved $L_{\infty}$ derivation with respect to the $L_{\infty}$ structure $\pi_{\hbar}$ on $A\brh$. Its terms are  multiderivations with respect to the commutative multiplication, and it agrees with $\hbar\pd_{\hbar}$ on the subalgebra $R\brh$. It respects the anti-involution $a(\hbar) \mapsto a(-\hbar)$ on $A \brh$ and satisfies $(\sD_{\hbar})_p \co A\brh \to \hbar^{p-1}A\brh$. These conditions are equivalent to the constraints on $D$. Beware that the curvature term $\sD_0$ lies in  $\hbar^{-1}A\brh$, rather than $A\brh$ itself, so strictly speaking $\sD_{\hbar}$ only defines a curved derivation of $(A(\!(\hbar)\!),\pi_{\hbar})$.

These properties are most easily understood when applied to  strict data $\pi=\pi_2$ and $D=D_0+D_1$ from \S \ref{strictsn}, with $\sD_{\hbar,1}= D_1  +\hbar\pd_{\hbar}$ being a  derivation on $A\brh$ with respect to both the commutative product and the Lie bracket $\hbar\{-,-\}_{\pi}$, respecting the anti-involution and agreeing with $\hbar\pd_{\hbar}$ on $R\brh$, such that $[\delta, \sD_{\hbar,1}] = \hbar\{\sD_{\hbar,0},-\}_{\pi}$, for $\sD_{\hbar,0}= \hbar^{-1}D_0 \in \hbar^{-1}A$, which in particular means that $ \hbar\{\sD_{\hbar,0},-\}_{\pi}=  \{D_0,-\}_{\pi} $.

When $n=0$ and $A$ is concentrated in degree $0$, the derivation $\sD_{\hbar}$ precisely corresponds to the additional data considered in \cite{DeWildeLecomteExactSymplectic,deligneDefFnsSymplectic}. 

\subsubsection{Weighted operadic interpretation}\label{weightoperad}

Curved $P_{n+1}$-algebras are governed by a curved operad (in the sense of \cite{DrummondColeBellierMillesHtpyThCurved}) $cP_{n+1}= \Com \circ (c\Lie[n])$, whose algebras $B$ have a (non-unital) graded commutative product 
of degree $0$, a graded Lie bracket $[-,-]$ of chain degree $n$ and a curvature element $c \in B_{-2-n}$ such that:
\begin{itemize}
 \item $[-,-]$ is a biderivation with respect to 
the product,
\item 
$\delta$ is a derivation with respect to both product and bracket,
\item $\delta c=0$ and  $\delta \circ \delta = [c,-]$.
\end{itemize}
In order for the structure to behave reasonably, the ideal generated by $c$ has to be pro-nilpotent.

The curved operads $c P_{n+1}$ and  $c \hat{P}_{n+1}:= \Com \circ (cL_{\infty}[n])$
have $\bG_m$-actions (or weight gradings) given by setting $\Com$ to have weight $0$ and putting the negative parity grading  on $c\Lie$ and $cL_{\infty}$, so  $c\Lie(r)$ and $cL_{\infty}(r)$ have weight $1-r$. 
\begin{definition}
Extending \cite[Definition \ref{DQpoisson-involutivePacdef}]{DQpoisson} to the curved setting, we denote these curved operads equipped with their $\bG_m$-actions by  $c P_{n+1}^{ac}$ and  $c \hat{P}_{n+1}^{ac}$. 

We define derivations $\sigma$ on  $c P_{n+1}$ and  $c \hat{P}_{n+1}$ to be multiplication by the weight of the $\bG_m$-action. 
\end{definition}

The map  $c \hat{P}_{n+1} \to c P_{n+1}$ is a weak equivalence of curved operads, and these also admit a weak equivalence from the curved operad $cP_{n+1,\infty}$ given by the cobar construction $\Omega(c P_{n+1}^!)^*$ of the co-unital co-operad  $(c P_{n+1}^!)^*$  Koszul dual to $c P_{n+1}$. The linear dual $ (c P_{n+1})^!$  of that co-unital co-operad is just the  operad  $P_{n+1}^{u}$ encoding  unital $P_{n+1}$-algebras, up to  shifts of both degree, generalising \cite[Lemma 1.16]{DQpoisson} in the obvious way. 

\begin{remark}
Beware that we are following the convention that Koszul dual co-operads are linear duals of Koszul dual operads, necessitating incorporation of degree shifts in the bar and cobar constructions, whereas \cite[\S 7.2.3]{lodayvalletteoperads} shifts the co-operad.
\end{remark}

\begin{definition}\label{weightshiftdef}
Given a $\bG_m$-equivariant chain complex $V$, let $V\<n\>$ be the chain complex $V$ with $\bG_m$-weight increased by $n$. Given a $\bG_m$-equivariant dg operad $\cP$, let $\cP\<n\>$ be the dg operad given by $\cP\<n\>(i):= \cP\<n(i-1)\>$, with the obvious operad operations.
\end{definition}

\begin{definition}
 We put a $\bG_m$-action on $cP_{n+1,\infty}$, denoted  $cP_{n+1,\infty}^{ac}$, by applying the cobar construction $\Omega$ to the co-unital dg co-operad  $(P_{n+1}^{u,ac}[-n]\<1\>)^*$. 
\end{definition}
Incorporating curvature in \cite[Lemma 1.16]{DQpoisson} then gives $\bG_m$-equivariant weak equivalences $cP_{n+1,\infty}^{ac} \to c \hat{P}_{n+1} \to c P_{n+1}$.

Now, the complete graded DGLA $L=\widehat{\Pol}(A,n)[n+1]$ of polyvectors is a quasi-isomorphic sub-DGLA of the deformation complex $L':=(\HHom_{\bS}((c P_{n+1}^!)^*, \End_A),\pd_A)$ in the terminology of \cite[\S 12.2.4]{lodayvalletteoperads}. Here, were are taking the convolution DGLA twisted by the differential $\pd_A$ corresponding the the commutative algebra structure on $A$ inducing a $P_{n+1}$-algebra structure with trivial bracket.  Taking the weight decomposition $P_{n+1}^{u,ac}[-n]\<1\>$ of $c P_{n+1}^!=  P_{n+1}^{u}[-n]$  induces a decomposition on that deformation complex as a product of weights, extending  the decomposition on polyvectors.

For the derivation $\sigma$ given by multiplication by weight, and $\eps^1$ a central variable of cochain degree $1$ (so $\eps^1\eps^1=0$), the following is then an immediate consequence of the definitions.
\begin{lemma}\label{epslemma}
The complete graded DGLA $\cocone(\sigma \eps \co L' \to L'\eps)$ is isomorphic to the  $\Q[\eps^1]$-linear  deformation complex of  
$A[\eps^1]$ as an algebra   over the curved dg operad $(cP_{n+1,\infty}^{ac}\ten_{\Q}\Q[\eps'], \delta + \sigma \eps^1)$.
\end{lemma}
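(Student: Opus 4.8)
The plan is to unwind both sides as convolution (twisting-morphism) complexes and to observe that base change along $\Q \to \Q[\eps^1]$, followed by twisting the operad differential by $\sigma\eps^1$, reproduces term for term the mapping-cone construction of Definition \ref{coconesigmadef}.

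First I would recall that, since $cP_{n+1,\infty}^{ac}$ is the cobar construction $\Omega\bigl((cP_{n+1}^!)^*\bigr)$ on the co-unital co-operad $(cP_{n+1}^!)^*$ (with its $\bG_m$-action coming from $P_{n+1}^{u,ac}[-n]\<1\>$), an algebra structure on a complex $B$ over $cP_{n+1,\infty}^{ac}$ is the same datum as a twisting morphism $(cP_{n+1}^!)^* \to \End_B$, whose associated deformation complex is the convolution DGLA $\HHom_{\bS}\bigl((cP_{n+1}^!)^*, \End_B\bigr)$ with differential twisted by that twisting morphism and the internal differentials (\cite[\S 12.2]{lodayvalletteoperads}). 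For $B = A$ with the twisting morphism $\alpha_A$ encoding the given commutative product together with vanishing bracket and vanishing curvature, this is precisely $L' = (\HHom_{\bS}((cP_{n+1}^!)^*,\End_A),\pd_A)$, with $\pd_A$ the differential twisted by $\alpha_A$, and the product decomposition of $L'$ is exactly the decomposition by $\bG_m$-weight of $(cP_{n+1}^!)^*$, on which $\sigma$ acts as multiplication by the weight. The crucial structural fact --- already implicit in the paper's use of $\sigma$ as a chain map on $L'$ --- is that $\alpha_A$ has $\bG_m$-weight $0$, because the commutative product is weight $0$; equivalently $\sigma(\alpha_A) = 0$, so $\sigma$ commutes with $[\alpha_A,-]$ and hence with $\pd_A$.

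Next I would pass to the $\Q[\eps^1]$-linear setting. Since $A[\eps^1] = A\ten_\Q\Q[\eps^1]$ with $\eps^1$ central, $\End_{A[\eps^1]} = \End_A\ten_\Q\Q[\eps^1]$ as curved operads, whence
\[
 \HHom_{\bS}\bigl((cP_{n+1}^!)^*\ten_\Q\Q[\eps^1],\ \End_{A[\eps^1]}\bigr)\;\cong\;L'\ten_\Q\Q[\eps^1]\;=\;L'\oplus L'\eps^1
\]
as graded $\Q[\eps^1]$-modules, the Lie bracket being the $\Q[\eps^1]$-bilinear extension of the bracket on $L'$ with $\eps^1$ central and $(\eps^1)^2 = 0$ --- which is exactly the bracket of Definition \ref{coconesigmadef}. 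The structure map $\alpha_A\ten\Q[\eps^1]$ on $A[\eps^1]$ still satisfies the Maurer--Cartan equation for the twisted operad differential $\delta + \sigma\eps^1$, since its failure is measured by $(\alpha_A\circ\sigma)\eps^1 = 0$ by the weight-$0$ fact above. The effect of the twist $\sigma\eps^1$ on the convolution differential is to add the operator $f \mapsto \pm(f\circ\sigma)\eps^1 = \pm\sigma(f)\eps^1$; together with $\pd_A$ extended $\Q[\eps^1]$-linearly this is precisely the differential of the mapping cone $\cocone(\sigma\eps\co L'\to L'\eps)$. Comparing differentials and brackets then identifies the two complete graded DGLAs.

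The one step needing care is this last differential bookkeeping: one must check that the co-derivation $\sigma$ of $(cP_{n+1}^!)^*$ (being weight multiplication) preserves the co-operad structure and commutes with the internal differential, so that $(\delta+\sigma\eps^1)^2 = 0$, and that its induced operator on the convolution complex is exactly the cone cross-term with the correct Koszul sign, with no extra term arising since $\alpha_A$ is both weight $0$ and $\eps^1$-constant. Granting this, the remaining identifications are the direct comparison of definitions asserted in the statement.
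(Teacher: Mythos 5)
Your proposal is correct and matches the paper's intent: the paper offers no written proof, declaring the lemma ``an immediate consequence of the definitions,'' and your argument is precisely the definitional unwinding being invoked — identifying the $\Q[\eps^1]$-linear deformation complex with the convolution DGLA $\HHom_{\bS}((cP_{n+1}^!)^*,\End_A)\ten_{\Q}\Q[\eps^1]$, using that the twisting morphism $\alpha_A$ has weight $0$ so that the twist $\sigma\eps^1$ contributes exactly the cone cross-term of Definition \ref{coconesigmadef}. The only care points (Koszul signs, $\sigma$ being a (co)derivation commuting with $\delta$, $(\eps^1)^2=0$) are the ones you flag, and they are routine.
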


Strict data $(\pi,D)$ on $A$ as in \S \ref{strictsn} then correspond to an algebra $A'_{\eps^1}:=(A[\eps^1],\pi,\delta + \eps^1 D_1, \eps^1 D_0)$  over the curved operad $(cP_{n+1}[\eps^1], \delta +\eps^1\sigma)$, for $\eps^1$ a central variable of cochain degree $1$ (so $\eps^1\eps^1=0$), 
such that $A'_{\eps^1}\ten_{\Q[\eps^1]}\Q$ is a $P_{n+1}$-algebra (specifically, $(A,\pi)$) with underlying commutative algebra $A$. More generally, elements of $\cP^D(A,n)$ correspond to  $(c\hat{P}_{n+1}[\eps^1], \delta +\eps^1\sigma)$-algebras $A'_{\eps^1}$ for which $A'_{\eps^1}\ten_{\Q[\eps^1]}\Q$ is a $\hat{P}_{n+1}$-algebra with underlying commutative algebra $A$. 

Applying the Rees construction to Lemma \ref{epslemma}, we have:
\begin{lemma}\label{epslemma2}
The complete DGLA $ (\cocone(\hbar\pd_{\hbar} \eps \co \widehat{\Rees}(L',W) \to \widehat{\Rees}(L',W)\eps)$ is isomorphic to the  $(\Q\brh[d\log \hbar], d_{\hbar})$-linear deformation complex of $(A\brh[d\log \hbar], d_{\hbar})$ as an algebra over the $\hbar$-adically complete
curved dg operad $(\widehat{\Rees}(cP^{ac}_{n+1,\infty})[d\log \hbar], \delta +d_{\hbar})$. 
Its involution is induced by the involutions $\hbar \mapsto -\hbar$ on either side, fixing $c\hat{P}^{ac}_{n+1}$ and $A$, and the filtration $\tilde{F}$ is determined by the grading on $cP_{n+1,\infty}^{ac}$.
\end{lemma}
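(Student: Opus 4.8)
The strategy is to deduce Lemma~\ref{epslemma2} from Lemma~\ref{epslemma} by applying the complete Rees construction, exploiting that the isomorphism of Lemma~\ref{epslemma} is one of \emph{complete graded} DGLAs and that every piece of data involved is compatible with the weight grading. Concretely, I would equip the complete graded DGLA $\cocone(\sigma\eps\co L'\to L'\eps)$ with the increasing filtration $W$ associated to its $\bG_m$-weight decomposition, where both cocone summands carry the grading inherited from the decomposition $P_{n+1}^{u,ac}[-n]\<1\>$ of $cP_{n+1}^!$, the degree-one central square-zero variable $\eps$ sits in weight $0$, and $\sigma$ is multiplication by weight. The complete Rees functor $\widehat{\Rees}(-,W)$ is defined on complete graded DGLAs and is additive, so it sends the isomorphism $\Phi$ of Lemma~\ref{epslemma} to an isomorphism $\widehat{\Rees}(\Phi,W)$ of complete DGLAs; since it distributes over the two summands of a cocone and over the differential, and sends $\sigma$ to $\tilde\sigma=\hbar\pd_{\hbar}$ as in Definition~\ref{sigmadef} applied in \S\ref{quantsn}, the source becomes $\cocone(\hbar\pd_{\hbar}\eps\co\widehat{\Rees}(L',W)\to\widehat{\Rees}(L',W)\eps)$, which is exactly the DGLA on the left of the statement.

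It then remains to identify $\widehat{\Rees}$ of the right-hand side of Lemma~\ref{epslemma}. The weight grading on the deformation complex $L'=(\HHom_{\bS}((cP_{n+1}^!)^*,\End_A),\pd_A)$ is carried entirely by the co-operad $(cP_{n+1}^!)^*$ and not by $\End_A$, and the $\hbar$-adic completeness built into $\widehat{\Rees}$ is compatible with the weight-product completeness built into $\widehat{\Pol}$; hence $\widehat{\Rees}(-,W)$ passes through the convolution construction, and (using that $\widehat{\Rees}$ commutes with the cobar construction $\Omega$ and with Koszul coduality) $\widehat{\Rees}(L',W)$ is the completed deformation complex of $A\brh$ over the $\hbar$-adically complete dg operad $\widehat{\Rees}(cP^{ac}_{n+1,\infty})$. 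Rees-ifying the weight-$0$ factor $\Q[\eps]$ gives $\Q\brh[\eps]$ and turns the twisting term $\sigma\eps$ in the operad differential into $(\hbar\pd_{\hbar})\eps$; renaming $\eps=:d\log\hbar$ then identifies $\Q\brh[\eps]$ with the logarithmic de Rham complex $(\Q\brh[d\log\hbar],d_{\hbar})$, the twisted operad with $(\widehat{\Rees}(cP^{ac}_{n+1,\infty})[d\log\hbar],\delta+d_{\hbar})$, and $A[\eps]$ with $(A\brh[d\log\hbar],d_{\hbar})$. Under this renaming the cocone presentation matches the de Rham twist, since the $x\mapsto\hbar\pd_{\hbar}(x)\,\eps$ component of the cocone differential on $\widehat{\Rees}(L',W)\oplus\widehat{\Rees}(L',W)\eps$ is precisely $d\log\hbar\cdot\hbar\pd_{\hbar}$ acting on $\widehat{\Rees}(L',W)[d\log\hbar]$; so $\widehat{\Rees}(\Phi,W)$ is the asserted isomorphism onto the $(\Q\brh[d\log\hbar],d_{\hbar})$-linear deformation complex.

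For the supplementary assertions, the involution $*$ on $\widehat{\Rees}(L',W)$ is $v(\hbar)\mapsto i(v(-\hbar))$ with $i$ multiplication by $(-1)^{\text{weight}}$ as in Remark~\ref{Lbrhrmk}; transporting it through $\widehat{\Rees}(\Phi,W)$ and tracking the Koszul signs turns it into the involution induced by $\hbar\mapsto-\hbar$ on $\widehat{\Rees}(cP^{ac}_{n+1,\infty})[d\log\hbar]$ and on $A\brh[d\log\hbar]$, fixing $c\hat{P}^{ac}_{n+1}$, $A$ and $d\log\hbar$ (the last because $d\log(-\hbar)=d\log\hbar$), while the filtration $\tilde{F}^p\widehat{\Rees}(L',W)=\prod_{m\ge p-1}\hbar^mW_mL'$ pulls back along the weight-graded, hence filtered, isomorphism $\Phi$ to the filtration by $\prod_{m\ge p-1}$ of the weight-$m$ part of the deformation complex, i.e.\ to the filtration determined by the grading on $cP^{ac}_{n+1,\infty}$. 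I expect the only step needing real care to be the interchange in the second paragraph: checking that the completed Rees product $\prod_m\hbar^m(-)$ commutes with the completed convolution complex (the product over operadic arities defining $\widehat{\Pol}$), and that the operad, cooperad, $\End$ and twisting-differential structures all survive it, together with the sign bookkeeping for the involution; everything else is a direct unwinding of the definitions and of Lemma~\ref{epslemma}.
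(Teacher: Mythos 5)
Your proposal is correct and matches the paper's own argument: the paper derives Lemma \ref{epslemma2} precisely by "applying the Rees construction to Lemma \ref{epslemma}", leaving the bookkeeping implicit. Your elaboration — Rees of the cocone turning $\sigma\eps$ into $\hbar\pd_{\hbar}\,d\log\hbar$, Rees passing through the convolution/deformation complex, and the transport of the involution $v(\hbar)\mapsto i(v(-\hbar))$ and of $\tilde{F}$ — is exactly the intended unwinding, so there is nothing further to add.
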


Elements $\pi_{\hbar}, D_{\hbar}\eps$ of $\mc(\cocone(\hbar\pd_{\hbar} \eps \co \tilde{F}^2\widehat{\Rees}(L,W) \to \widehat{\Rees}(L,W)\eps)^{*=\id}$ as in  \S \ref{formalderivationsn} then correspond to  anti-involutive algebras $A'_{\hbar}$ over the complete
curved operad $(\widehat{\Rees}(c\hat{P}^{ac}_{n+1})[d\log \hbar], \delta +d_{\hbar})$ over $(Q\brh[d\log \hbar], d_{\hbar})$, where $\hbar$ and $d\log\hbar =\hbar^{-1}d\hbar$ are central variables of cochain degrees $0,1$ respectively.  The anti-involution  is an involutive isomorphism $e\co (A'_{\hbar})^o \cong A'_{\hbar}$, where the algebra $(A'_{\hbar})^o$ is the algebra defined via the involution of $(\widehat{\Rees}(c\hat{P}^{ac}_{n+1})[d\log \hbar], \delta +d_{\hbar})$ which sends  $\hbar$ to $-\hbar$ and fixes $d\log\hbar$ and $c\hat{P}^{ac}_{n+1}$. 

Explicitly, $c\hat{P}_{n+1}$ is generated by the commutative multiplication element $m \in \Com(2)_0$, the   $L_{\infty}$ operations $\ell_i \in (cL_{\infty}[n])(i)_{i-2+(i-1)n}$ for $i \ge 2$, and the curvature term $\theta \in (cL_{\infty}[n])(0)_{-n-2}$,  so $(\widehat{\Rees}(c\hat{P}^{ac}_{n+1})[d\log \hbar]$ is generated over $(Q\brh[d\log \hbar], d_{\hbar})$ by $m$, $\hbar \theta$ and $\{\hbar^{1-i}\ell_i\}_{i \ge 2}$.  In particular, the curvature of the differential is given by $[\hbar^{-1}\ell_2,\hbar \theta]= [\ell_2,\theta]$.

The graded-commutative algebra underlying $A'_{\hbar}$ is then just $A\brh[\d\log\hbar]$, with differential $\delta + \pi_1 + (d\log\hbar) D_1 + d_{\hbar}$, and  the other generators $\hbar^{1-i}\ell_i$ and $\hbar\theta$ of $\widehat{\Rees}(c\hat{P}^{ac}_{n+1})$ act as   $\{\pi_i + (d\log\hbar) D_i\}_{i \ge 2} $ and $(d\log\hbar) D_0$, respectively. The square of the differential is thus $[\pi_2+ (d\log\hbar) D_2, (d\log\hbar) D_0]= [\pi_2,(d\log\hbar) D_0]= [D_0,-]_{\pi_2}d\log \hbar$. The anti-involution is simply given by fixing  $A[\d\log\hbar]$ and sending $\hbar$ to $-\hbar$.

\begin{remarks}
If the weights of a dg operad $\cP$ are all non-positive, then $\cP\brh \subset \widehat{\Rees}(\cP)$, giving a $\cP$-algebra underlying any $\widehat{\Rees}(\cP)$-algebra. That condition is not quite satisfied by $cP^{ac}_{n+1}$ and $c\hat{P}^{ac}_{n+1}$ since the curvature has weight $1$, but $\pi_{\hbar} \in \mc(\tilde{F}^2\widehat{\Rees}(L,W))$ does still define a $c\hat{P}^{ac}_{n+1}\brh$-algebra structure since we have $\pi_0 \in \hbar^2A\brh$, so $ \hbar^{-1}\pi_0 \in \hbar A\brh$.

Because $\cP(\!(\hbar)\!) \subset \widehat{\Rees}(\cP)[\hbar^{-1}]$ in general, it always follows that $A'_{\hbar}[\hbar^{-1}]$ is naturally a $c\hat{P}^{ac}_{n+1}$-algebra, with operations $\pi_{\hbar} + (d\log\hbar)\sD_{\hbar}$.
\end{remarks}

\begin{remark}
We can think of the dg algebras 
 $\Q[\eps^1]$ and $(Q\brh[d\log \hbar], d_{\hbar})$ as functions on formal completions of $B\bG_m$ and $[\bA^1/\bG_m]$, respectively at the unique point and at $\{0\}$. Explicitly, they are given by applying the left adjoint $D^*$ of the denormalisation functor to the cosimplicial rings of functions on the associated simplicial schemes, as in \cite[Example \ref{poisson-DstarBG}]{poisson}, then completing at the origin. 
 
 Derivations with respect to $\sigma$  (resp.  $\hbar\pd_{\hbar}$-derivations) give structures over $\Q[\eps^1]$ (resp. $(Q\brh[d\log \hbar], d_{\hbar})$), so they interpolate between graded structures (resp. filtrations) and the underlying object (resp. the associated graded object), forgetting the grading. 
 Proposition \ref{exactcompprop} associates non-degenerate Poisson structures with formal derivation to exact symplectic structures.
 Shifted cotangent bundles $T^*[n]X$ then give an example where the formal derivation  integrates to a $\bG_m$-action, by giving cotangent forms weight $-1$ and  $X$ weight $0$. Derived critical loci give an example where the formal derivations  do not integrate in general.  
\end{remark}

\subsubsection{Generalisations}\label{genPDsn}

The considerations of \S \ref{weightoperad} all generalise in much the same way to strings of CDGAs, to analytic and $\C^{\infty}$ settings, to stacky CDGAs, to co-isotropic structures and to double Poisson structures. Whereas curvature terms in deformations of the non-negatively shifted Poisson structures on the CDGAs considered in derived geometry only affect automorphisms (via the polynomial de Rham complex on simplices), on stacky CDGAs they give rise to more objects, corresponding formally to line bundles (when $n=-1$) or more generally to  $K(\bG_m,n+2)$-torsors.

The general pattern is as follows.
\begin{setting}\label{cooperadsetting}
Assume that we have a $\bG_m$-equivariant, i.e. weight graded, coloured dg co(pr)operad (or occasionally, $\infty$-co-(pr)operad) $\C=\bigoplus_{m\le 1} \cW_m\C$ with $\cW_1\C$ being the  arity $0$ component of $\C$, and  the arity $1$ part of  $\cW_0\C$ consisting only of co-identities (so $\cW_0\C(x,x)=I$ for all colours $x$ and $\cW_0\C(x,y)=0$ for $x \ne y$). 
\end{setting}

\begin{definition}\label{cPpoperaddef}
Since $\cW_{\ge 0}\C$ and $\cW_0\C$ are co-augmented, so the cobar construction $\Omega$ on co-augmented co-(pr)operads gives us  a  $\bG_m$-equivariant dg (pr)operad  $\cP:= \Omega(\cW_{\le 0}\C)$ and a dg (pr)operad  $\Omega\cW_0\C= \cW_0\cP$, on the same colours as $\C$. 
The curved cobar construction gives us a curved $\bG_m$-equivariant dg (pr)operad  $c\cP:= \Omega(\C)$ on those colours.
\end{definition}

Note that since the quotient maps $\C \to \cW_{\le 0}\C \to \cW_0\C$ are morphisms of dg co(pr)operads, they give rise to morphisms $c\cP \to \cP \to \cW_0\cP$, and the latter morphism has a section  $\cW_0\cP \to \cP$. 

\begin{definition}
Define an  involution $*$ on each of $\C$, $\cP$ and $c\cP$  as the action of $-1 \in \bG_m$ (so $(-1)^m$ on $\cW_m\cP$). Let $(-)^{\op}$ be the corresponding involution of the category of algebras. 

Let $\sigma$ be the coderivation on $\C$ (resp. derivation on $\cP$ or $c\cP$) given by multiplying $\cW_m\C$ (resp. $\cW_m\cP$) by $m$. 
\end{definition}
 
 \begin{examples}\label{CPgradedex}
 \begin{enumerate}[wide, labelindent=0pt, itemsep=6pt]
  \item  In the setting of \S \ref{weightoperad}, we have $\C= (P_{n+1}^{u,ac}[-n]\<1\>)^*$, so $\cP= P_{n+1,\infty}^{ac}$, while the curved cobar construction $\Omega(\C)$ is $cP_{n+1,\infty}^{ac}$, and $\cW_0\cP=\Com_{\infty}$. 
  
  \item\label{CPgradedex:coiso} For $n$-shifted co-isotropic structures, we take $\C$ to be the coloured $\infty$-co-operad $(P_{n+1,n}^{u,ac}[-n]\<1\>)^*$, where $P_{[n+1,n]}^u$ is the operad $\bP_{[n+1,n]}$ of \cite[\S 3.4]{MelaniSafronovI}.  
  Thus $\cP$ is a cofibrant resolution of the non-unital operad $P_{[n+1,n]}$ (i.e. $\bP^{nu}_{[n+1,n]}$), by  \cite[Proposition 3.8]{MelaniSafronovI}, so a $\cP$-algebra is a pair $(A,B)$ where $A$ is a $P_{n+1,\infty}$-algebra acting on the $P_{n, \infty}$-algebra $B$. The operad $\cW_0\cP$  simply encodes pairs $(A,B)$ of commutative (or $\Com_{\infty}$) algebras equipped with a morphism $A \to B$.
  \end{enumerate}
 \end{examples}

\begin{setting}\label{PQalgsetting} 
 The setup is then 
 that we have a $\cW_0\cP$-algebra $A$   in some dg (pr)operad $\cQ$. 
\end{setting}
In algebraic settings, $\cQ$  just consists of multilinear maps, but in analytic and $\C^{\infty}$ settings $\cQ$ has to be a dg operad of polydifferential operators.

\begin{definition}
 In Settings \ref{cooperadsetting} and \ref{PQalgsetting}, 
we take $L$ to be the complete graded DGLA
given by the deformation complex
$(\HHom_{\bS}(\C, \cQ),\pd_A)$ in the terminology of \cite[\S 12.2.4]{lodayvalletteoperads} (or its coloured and properadic generalisations), i.e. the convolution DGLA twisted by the differential $\pd_A$ corresponding the $\Omega(\cW_0\C)=\cW_0\cP$-algebra $A$, regarded as an $\Omega(\C)=c\cP$-algebra. The weight decomposition $L=\prod_{m \ge -1}\cW_mL$ is induced by that on $\C$, so its sub-DGLA $\prod_{m \ge 0}\cW_mL$ is the deformation complex
$(\HHom_{\bS}(\cW_{\le 0}\C, \cQ),\pd_A)$ of $A$ as a $\cP$-algebra.
\end{definition}

Thus $\mmc(F^2L)$ is the space of $\cP$-algebras   with underlying $\cW_0\cP$-algebra $A$ (defined via the section $\cW_0\cP \to \cP$), or equivalently the space of morphisms $\cP \to \cQ$ of dg (pr)operads extending $A \co \cW_0\cP \to \cQ$.

Adapting Lemmas \ref{epslemma} and \ref{epslemma2}, we then have:

\begin{lemma}\label{epslemmagenl}
The complete graded DGLA $\cocone(\sigma \eps \co L \to L\eps)$ is isomorphic to the  $\Q[\eps^1]$-linear  deformation complex of  
$A[\eps^1]$ as an algebra   over the curved dg (pr)operad $(c\cP\ten_{\Q}\Q[\eps'], \delta + \sigma \eps^1)$.

The complete DGLA $ (\cocone(\hbar\pd_{\hbar} \eps \co \widehat{\Rees}(L,W) \to \widehat{\Rees}(L,W)\eps)$ is isomorphic to the  $(\Q\brh[d\log \hbar], d_{\hbar})$-linear deformation complex of $(A\brh[d\log \hbar], d_{\hbar})$ as an algebra over the $\hbar$-adically complete
curved dg operad $(\widehat{\Rees}(c\cP)[d\log \hbar], \delta +d_{\hbar})$.
Its involution is induced by the involutions $\hbar \mapsto -\hbar$ on either side, fixing $c\cP$ and $A$, and the filtration $\tilde{F}$ is determined by the grading on $c\cP$.
\end{lemma}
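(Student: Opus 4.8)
The plan is to identify each side of the two claimed isomorphisms explicitly as a convolution DGLA, so that the statements become instances of the twisted convolution formalism already invoked for Lemmas \ref{epslemma} and \ref{epslemma2}. Recall that, by construction, $L = (\HHom_{\bS}(\C,\cQ),\pd_A)$ is the deformation complex of $A$ regarded as a $c\cP = \Omega(\C)$-algebra via the section $\cW_0\cP \to \cP$, and that the weight decomposition $L = \prod_{m\ge -1}\cW_m L$ --- hence the derivation $\sigma$ --- is the one induced from the weight grading of $\C$, which is exactly the grading defining $\sigma$ on $c\cP$. The remaining work is to check that forming $\cocone(\sigma\eps\co -\to -\eps)$, forming $\widehat{\Rees}(-,W)$, and (for the involution) passing to the $*$-fixed subcomplex all commute, under the deformation-complex functor, with the corresponding operations on the curved co(pr)operad side; with the sign conventions of \S\ref{weightoperad} fixed, this is a bookkeeping exercise.

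For the first isomorphism: since $A[\eps^1] = A\ten_{\Q}\Q[\eps^1]$ is free as a $\Q[\eps^1]$-module on $A$, the $\Q[\eps^1]$-linear endomorphism (pr)operad of $A[\eps^1]$ is $\End_A\ten_{\Q}\Q[\eps^1]$, so the $\Q[\eps^1]$-linear deformation complex of $A[\eps^1]$ over $c\cP\ten_{\Q}\Q[\eps^1]$ is $\HHom_{\bS}(\C,\cQ)\ten_{\Q}\Q[\eps^1] = L\oplus L\eps^1$ as a graded $\Q$-module (with $\eps^1$ of cochain degree $1$), equipped with the convolution bracket, which is the $\Q[\eps^1]$-bilinear extension of the bracket of $L$. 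Under the correspondence $a+b\eps \leftrightarrow a+b\eps^1$ this is precisely the DGLA structure on $\cocone(\sigma\eps\co L\to L\eps)$ of Definition \ref{coconesigmadef}. It remains to match differentials: the operad differential $\delta$ together with the twist $\pd_A$ produces the differential of $L$ on each summand, while the extra operad differential term $\sigma\eps^1$ produces, by naturality of the twisted convolution differential, the extra term $\pm(\sigma\eps^1)$ on the deformation complex, and this is exactly the cocone differential. One also checks, using $(\eps^1)^2 = 0$, that $(\delta+\sigma\eps^1)^2$ contributes curvature only through the arity-$0$ component $\cW_1\C$ of $\C$, exactly as for $c\hat{P}_{n+1}$.

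For the second isomorphism one repeats this argument with $\widehat{\Rees}(-,W)$ in place of $-\ten_{\Q}\Q[\eps^1]$, using isomorphisms $(-)_{\hbar}\co L\brh \to \widehat{\Rees}(L,W)$ and $(-)_{\hbar}\co c\cP\brh \to \widehat{\Rees}(c\cP)$ of the form in Remark \ref{Lbrhrmk} (multiplication of the weight-$m$ piece by $\hbar^m$), and adjoining the degree-$1$ variable $d\log\hbar$ with $(d\log\hbar)^2 = 0$. Here $d\log\hbar$ plays the role of $\eps^1$ and the de Rham differential $d_{\hbar}$ plays the role of $\sigma\eps^1$: from $d_{\hbar}(\hbar^m x) = m\,\hbar^m x\,d\log\hbar$ the induced extra term on $\widehat{\Rees}(L,W)[d\log\hbar]$ is $\tilde{\sigma}\eps = \hbar\pd_{\hbar}\eps$, reproducing $\cocone(\hbar\pd_{\hbar}\eps\co \widehat{\Rees}(L,W)\to\widehat{\Rees}(L,W)\eps)$. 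The asserted involution is the one induced by $\hbar\mapsto -\hbar$ on both $\widehat{\Rees}(c\cP)$ (i.e. $(-1)^m$ on $\cW_m$, so fixing $c\cP$ under $(-)_{\hbar}$) and on $A\brh$, which corresponds to $*$ on the deformation complex, and the filtration $\tilde{F}$ is read off the $\hbar$-adic/weight grading of $\widehat{\Rees}(c\cP)$, i.e. $\tilde{F}^p = \prod_{m\ge p-1}\tilde{\cW}_m$, as in Definition \ref{tildeLdef}. The \emph{main obstacle} is precisely that $\widehat{\Rees}(-,W)$ must be shown to commute with forming the convolution DGLA and with the (curved) co(pr)operadic structure maps --- i.e. that $\widehat{\Rees}(\HHom_{\bS}(\C,\cQ),W)$ really is the $\Q\brh$-linear deformation complex of the $\widehat{\Rees}(\cQ)$-algebra $A\brh$ over $\widehat{\Rees}(c\cP)$, with the correct twisting morphism and curvature. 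This rests on the boundedness $\C = \bigoplus_{m\le 1}\cW_m\C$ of Setting \ref{cooperadsetting} and on $A$ and $\cQ$ carrying weight $0$, which keep the relevant infinite products indexed compatibly on both sides; note that, because the curvature generator has weight $1$, $\widehat{\Rees}(c\cP)$ is $\hbar$-adically complete but strictly larger than $c\cP\brh$, exactly as in the Remarks following Lemma \ref{epslemma2}, but this does not affect the identification once the products are handled via the isomorphisms $(-)_{\hbar}$.
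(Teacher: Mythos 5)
Your proposal is correct and follows essentially the same route as the paper, which obtains this lemma by exactly the unravelling you describe: the first statement is the "immediate consequence of the definitions" identification of the $\Q[\eps^1]$-linear twisted convolution complex with the cocone (as for Lemma \ref{epslemma}), and the second is the Rees construction with $d\log\hbar$ in the role of $\eps^1$ and $d_{\hbar}$ inducing the $\hbar\pd_{\hbar}\eps$ term (as for Lemma \ref{epslemma2}), together with the evident identification of the involution and of $\tilde{F}$. One small caution: your auxiliary claim of an isomorphism $(-)_{\hbar}\co c\cP\brh \to \widehat{\Rees}(c\cP)$ is neither needed nor automatic in the generality of Setting \ref{cooperadsetting} (arity-wise the weights of $\Omega(\C)$ need not be bounded below), so the matching of differentials and filtrations is best carried out directly on $\widehat{\Rees}(c\cP)[d\log\hbar]$, as you in effect do.
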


Thus $\mmc(\cocone(\sigma \eps \co F^2L \to L\eps))$ can be thought of as  the space of $(c\cP\ten_{\Q}\Q[\eps'], \delta + \sigma \eps^1)$-algebras $B$ flat over $\Q[\eps^1]$,  with curvature divisible by $\eps^1$, for which the $\cP$-algebra $B/(\eps^1)$ having underlying $\cW_0\cP$-algebra $A$. Specifically, for the contraderived construction $\cQ^{\ctr}$ as in \cite[Definition \ref{coddt-Qctrdef}]{coddt}, it is the space of $\Q[\eps^1]$-linear morphisms $(c\cP\ten_{\Q}\Q[\eps'], \delta + \sigma \eps^1) \to \cQ^{\ctr}(\Q[\eps^1])$ for which reduction modulo $\eps^1$ factors through a morphism $\cP \to \cQ$ extending $A \co \cW_0\cP \to \cQ$.

Similarly, $\mmc (\cocone(\hbar\pd_{\hbar} \eps \co \tilde{F}^2\widehat{\Rees}(L,W) \to \widehat{\Rees}(L,W)\eps)^{*=\id})$ can be thought of as the space of involutive  $(\widehat{\Rees}(c\cP)[d\log \hbar], \delta +d_{\hbar})$-algebras $B$ flat and complete over $(\Q\brh[d\log \hbar], d_{\hbar})$, with curvature (i.e. the image of $\hbar\C(0)= \hbar\cW_1\C$) lying in $(\hbar, d\log \hbar)B$ and the $\cP$-algebra $B/(\hbar, d\log\hbar)$ having underlying $\Omega(\cW_0\C)$-algebra $A$. In particular, $B[\hbar^{-1}]$ is a $(c\cP(\!(\hbar)\!) [d\log \hbar], \delta +d_{\hbar})$-algebra. 

\subsection{Deformation quantisations with \tps{$\hbar\pd_{\hbar}$}{hd}-derivations}\label{interpretDQsn}

Deformation quantisations as in \S \ref{quantsn} are associated to certain filtered DGLAs, and these often arise from filtered operads. The setup is very similar to \S \ref{genPDsn}, but with operads only being filtered, rather than graded. The general pattern as follows. 

\begin{setting}\label{filtcooperadsetting}
Assume that we have a coloured dg co(pr)operad $\C$  (or occasionally, $\infty$-co-(pr)operad) equipped with an 
 increasing filtration $W$, compatible with the co-operad operations, with $\C=W_1\C$. Unless stated otherwise, it is  equipped with an involution $*$ respecting the filtration and homotopic to $(-1)^m$ on $\gr^W_m\C$. We assume moreover that $W_0\C$ has no component of arity $0$, that $\gr^W_1\C$ is concentrated in arity $0$ and that $\gr^W_0\C$  consists only of co-identities (so $\gr^W_0\C(x,x)=I$ for all colours $x$ and $\gr^W_0\C(x,y)=0$ for $x \ne y$). 
\end{setting}
 
In particular, writing $\C(0)$ for the arity $0$ component $\bigoplus_x \C(x) $, we have a decomposition $\C = (W_0\C) \oplus \C(0)$,  and the quotient co-operad $\C/\C(0)$ is co-augmented, as is its quotient $\gr^W_0(\C/\C(0))$.  
\begin{definition}
The cobar construction $\Omega$ on co-augmented co-(pr)operads  thus gives us a filtered dg (pr)operad  $\cP:= \Omega(\C/\C(0))$ with $W_0\cP=\cP$ and a dg (pr)operad  $\gr_W^0\cP=\Omega(\gr^W_0(\C/\C(0)))$, on the same colours as $\C$. The curved cobar construction gives us a curved filtered dg (pr)operad  $c\cP:= \Omega(\C)$ on those colours. 
\end{definition}
Note that since the quotient maps $\C \to \C/\C(0) \to \gr^W_0(\C/\C(0))$ are morphisms of dg co(pr)operads, they give rise to morphisms $c\cP \to \cP \to \gr^W_0\cP$. Note that the involution $*$ induces involutions on $c\cP$ and $\cP$, homotopic to $(-1)^m$ on $\gr^W_m$.

\begin{examples}\label{quantinterpretnex}
\begin{enumerate}[wide, labelindent=0pt, itemsep=6pt]
 \item \label{BD1ex} The prototypical example is deformation quantisation of $0$-shifted Poisson structures, where we  filter the unital associative operad $\Ass^u$  as in \cite[\S 3.5.1]{CPTVV}, taking the smallest filtration $W$ such that the commutator bracket lies in $W_{-1}\Ass^u(2)$ and $W_0\Ass^u=\Ass^u$. The Rees construction of this filtration is the $BD_1$ operad of \cite[Definition 13.2.2]{costelloNotesSupersymm} (due to Ed Segal), so we denote the filtered operad by $BD_1^{u,\pre}$ and write $BD_1^{\pre}$ for the induced filtration on the non-unital associative operad $\Ass \subset \Ass^u$.
 Thus a $BD_1^{\pre}$-algebra $A$ in filtered complexes is a dg associative algebra for which:
 the multiplication respects the filtration and the commutator lowers filtration index, or equivalently the associated graded $\gr^WA$ is graded commutative; a $BD_1^{u,\pre}$-algebra also has a unit $1 \in \z_0W_0A$. 
 
 There is also a curved enhancement $cBD_1^{\pre}$ with an element $\theta$ generating $W_1cBD_1^{\pre}(0)_{-2}$ such that curvature is the commutator of $\theta$ with the bracket. A $BD_1^{\pre}$-algebra $A$ in filtered curved complexes has an associative multiplication respecting the filtration such that $\gr^WA$ is graded-commutative, together with  an element  $c \in \z_{-2}W_1A$ such that the differential is a derivation with curvature $[c,-]$.
  
Adapting the notation of Definition \ref{weightshiftdef} from gradings to filtrations, 
an argument similar to \cite[Lemma \ref{DQLag-barcobarprop1}]{DQLag} shows that 
the Koszul dual of $BD_1^{\pre}$  is the filtered operad $BD_1^{\pre}\<1\>$, which has filtration $W_m( BD_1^{\pre}\<1\>(i))= W_{m-i+1} BD_1^{\pre}(i)$. This is the smallest filtration for which the product lies in $W_1$ and the  commutator bracket lies in $W_0$, while the filtration of $BD_1^{u,\pre}\<1\>$ also stipulates $1 \in W_{-1}$. Given a vector space $V$, the free $BD_1^{u,\pre}\<1\>$-algebra generated by $V$ is the tensor algebra $T(V)$ filtered by powers of the free Lie algebra $\Lie(V)$,  with  $W_{n-1}T(V)= \im(\Lie(V)^{\ten n} \to T(V))$ for $n \ge 0$. 

In this case, we thus take $\C$ to be  $ BD_1^{u,\pre}\<1\>^*$, equipped with the involution which sends an associative algebra to its opposite algebra, so  $BD_{1,\infty}^{\pre} :=\Omega(\C/\C(0))$ is a cofibrant replacement of $BD_1^{\pre}$, and $cBD_{1,\infty}^{\pre} :=\Omega(\C)$ is a replacement for the curved associative operad $cBD_1^{\pre}$ equipped with the  filtration above. We have $\gr^WBD_1^{\pre} \cong P_1^{ac}$ and $\gr^WBD_1^{u,\pre} \cong P_1^{u,ac}$, so $\gr^WBD_{1,\infty}^{\pre} \cong P_{1, \infty}$ and $\gr^WcBD_{1,\infty}^{\pre} \cong cP_{1, \infty}$.

\item \label{BD2ex} To quantise $1$-shifted Poisson structures, we can filter the brace operad by a decreasing filtration $\gamma$ as in \cite[Definition \ref{DQLag-acbracedef}]{DQLag}, with the cup product in $\gamma^0$ and $r$-braces in $\gamma^r$. Alternatively, we can take the quasi-isomorphic filtration given by good truncation $\tau_{\ge r}$, and we can replace the brace operad with chains on the $E_2$ operad. Setting $W_{-r}$ to be $\gamma^r$ or $\tau_{\ge r}$, we thus obtain a filtered operad $BD_2^{u,\pre}$, with the non-unital sub-operad $BD_2^{\pre}$ being Koszul dual up to a shift. 

These operads carry involutions   corresponding to the operation sending an $E_2$-algebra to its opposite, corresponding to the element 
$P \in \GT(\Q)$ of  \cite[\S 4.1]{barnatan}, following  \cite[Proposition 4.1]{drinfeldQuasiTriangular}.
  
 \item  \label{BDnex} The  characterisation via chains on the $E_2$ operad generalises to all positively shifted structures, defining  as in \cite[\S 3.5.1]{CPTVV}  a filtration on chains on the $E_{n+1}$-operad by setting $W_{-r}$ to be good truncation $\tau_{\ge nr}$. We thus obtain a filtered operad $BD_{n+1}^{u,\pre}$, with the non-unital sub-operad $BD_{n+1}^{\pre}$ being Koszul dual up to a shift. Again, these carry a canonical involution.

We then set $\C$ to be $ BD_{n+1}^{u,\pre}[-n]\<1\>^*$, so   $BD_{n+1,\infty}^{\pre} :=\Omega(\C/\C(0))$ is a cofibrant replacement of $BD_{n+1}^{\pre}$, and $cBD_{n+1,\infty}^{\pre} :=\Omega(\C)$ is a replacement for the curved associative operad $cBD_{n+1}^{\pre}$ equipped with the good truncation filtration. We have $\gr^WBD_{n+1}^{\pre} \simeq P_{n+1}^{ac}$ and $\gr^WBD_{n+1}^{u,\pre} \simeq P_{n+1}^{u,ac}$, so $\gr^WBD_{n+1,\infty}^{\pre} \simeq P_{n+1, \infty}$ and $\gr^WcBD_{n+1,\infty}^{\pre} \cong cP_{n+1, \infty}$.

For $n>1$, \cite{FresseWillwacherIntrinsic} can be interpreted as giving canonical filtered quasi-isomorphisms $BD_{n+1}^{u,\pre} \simeq P_{n+1}^{u,\pre}$ intertwining the respective involutions. For $n=1$, such equivalences exist but depend on a choice of even associator.

\item For non-negatively shifted co-isotropic structures, the Swiss cheese construction of \cite[\S 3.3]{MelaniSafronovI} gives us a filtered $\infty$-co-operad $P( BD_{n+1}^{\pre}[-n-1]\<1\>^*, BD_{n}^{\pre}\<1\>^*)$ on two colours, and we can take $\C$ to be its co-unital enhancement $ P( BD_{n+1}^{u,\pre}[-n-1]\<1\>^*, BD_{n}^{u,\pre}\<1\>^*)$. Then $BD_{[n+1,n]\infty}^{\pre} :=\Omega(\C/\C(0))$ is a model for the operad encoding pairs $(A,B)$ with $A$ a $BD_{n+1}$-algebra acting on a $DB_n$-algebra $B$, while $\Omega \C$ incorporates curvature. Again, for $n \ge 1$ these carry involutions corresponding to taking the opposite algebra. The associated graded dg operad $\gr^WBD_{[n+1,n]\infty}^{\pre}$ is a resolution of the dg operad $P_{[n+1,n]}$ from Example \ref{CPgradedex}.(\ref{CPgradedex:coiso}), and similarly for the  curved analogues.

\item\label{BVex} For $(-1)$-shifted deformation quantisations, consider the dg operad $BV$ whose algebras in chain complexes are equipped with a graded-commutative product 
of chain degree $0$ and a graded Lie bracket $[-,-]$ of chain degree $-1$ such that the bracket is closed under the differential and a derivation with respect to the multiplication, together with the key property that $\delta(a\cdot b) -(\delta a)\cdot b \mp a\cdot (\delta b) =[a,b]$. The Rees construction of this filtration is the $BD$ operad of \cite[Definition 2.2.5]{gwilliamThesis}, so we then let $BD_0^{\pre}$ be the dg operad $BV$ equipped with the filtration which places the product in $W_0$ and the Lie bracket in $W_{-1}$.

The BV operad is again Koszul self-dual, with  the twisting cochain arising since for any pair $A,B$ of BV algebras, the tensor product $A \ten B$ carries a natural BV algebra structure, and hence a DGLA structure on $A\ten B[-1]$. The Koszul dual of $BD_{0}^{\pre}$ is $BD_{0}^{\pre}\<1\>$, with the shift in filtration corresponding to the weight of the Lie bracket. That the twisting cochain induces a quasi-isomorphism then follows because $\gr^WBD_0^{\pre}\cong P_0$, so we know we have Koszul duality on the associated graded dg operads.

We can then take  $\C$ to be $ BD_{0}^{u,\pre}\<1\>^*$, with very similar considerations to those for the higher shifts. However, beware that $BD_0^{\pre}$ does not carry an involution with the required properties, which is why square roots of the dualising line bundle (also known as spin structures or the space of Berezinian half-densities) enter the picture. 

Specifically, the notion of curved $BD_{0}$-algebra deformations generalises to allow morphisms to have curvature in $A\brh^{\by}$ rather than just $1+ \hbar A\brh = \exp(\hbar A\brh)$, with the image modulo $\hbar$ in $A^{\by}$ giving rise to an underlying line bundle $\sL$. These algebras do have a duality theory coming from  the equivalence $\sD(\sL)^{\op} \simeq \sD(\sL^{\vee})$ of rings of differential operators for the  Grothendieck--Verdier dual $\sL^{\vee}:=\sHHom_{\sO}(\sL,\sM)$  for any line bundle $\sM$ with a right $\sD$-module structure. The opposite algebra of a deformation quantisation of $(\pi,\sL)$ is a  deformation quantisation of $(\pi, \sL^{\vee})$, so there is a notion of self-dual deformation quantisations of $(\pi, \sqrt{\sM})$; see \S \ref{vanishsn} for more details.


\end{enumerate}

 \end{examples}

\begin{setting}
 The setup is then
that we have a $\gr^W_0\cP$-algebra $A$   in some dg (pr)operad $\cQ$. 
\end{setting}
In algebraic settings, $\cQ$  just consists of multilinear maps, but in analytic and $\C^{\infty}$ settings $\cQ$ has to be a dg operad of polydifferential operators.

\begin{definition}
We then  take $\breve{L}$ to be the complete filtered DGLA
given by the deformation complex
$(\HHom_{\bS}(\C, \cQ),\pd_A)$.
\end{definition}

Lemma \ref{epslemma2} then adapts to give:
\begin{lemma}\label{epslemmaquant}
The complete DGLA $ (\cocone(\hbar\pd_{\hbar} \eps \co \widehat{\Rees}(\breve{L},W) \to \widehat{\Rees}(\breve{L},W)\eps)$ is isomorphic to the  $(\Q\brh[d\log \hbar], d_{\hbar})$-linear deformation complex of $(A\brh[d\log \hbar], d_{\hbar})$ as an algebra over the $\hbar$-adically complete
curved dg operad $(\widehat{\Rees}(c\cP)[d\log \hbar], \delta +d_{\hbar})$.
Its involution is induced by the involutions $\hbar \mapsto -\hbar$ on either side, fixing $c\cP$ and $A$, and the filtration $\tilde{F}$ is determined by the filtration on $c\cP$.
\end{lemma}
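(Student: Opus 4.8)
The plan is to follow the proof of Lemma~\ref{epslemma2}---which itself applies the ($\hbar$-adically completed) Rees construction to Lemma~\ref{epslemma}---but with the graded co-(pr)operad used there replaced throughout by the filtered co-(pr)operad $\C$ of Setting~\ref{filtcooperadsetting}; equivalently, one applies the Rees construction to Lemma~\ref{epslemmagenl}. Thus the argument should amount to carefully unwinding the definitions. First I would recall that, by construction, $\breve{L}=(\HHom_{\bS}(\C,\cQ),\pd_A)$ is the convolution DGLA (in the coloured/properadic sense where needed) of $A$ regarded as an $\Omega(\C)=c\cP$-algebra via the twisting differential $\pd_A$, with the filtration $W$ induced from that on $\C$. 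I would then record two compatibilities: (i) the cobar construction commutes with the completed Rees construction, so $\widehat{\Rees}(c\cP)=\widehat{\Rees}(\Omega(\C))$ is the cobar construction of the completed Rees co-(pr)operad of $(\C,W)$; and (ii) for fixed $\cQ$, the convolution DGLA construction commutes with the completed Rees construction (against $\cQ\brh$). Together (i) and (ii) identify $\widehat{\Rees}(\breve{L},W)=\prod_m\hbar^mW_m\breve{L}$ with the $\Q\brh$-linear deformation complex of the base change $A\brh$ over the curved dg operad $\widehat{\Rees}(c\cP)$, the weight grading $\tilde{\cW}_m=\hbar^mW_m\breve{L}$ and derivation $\tilde{\sigma}=\hbar\pd_{\hbar}$ on it being induced (up to sign) by the weight grading and weight derivation on the Rees co-(pr)operad.

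The key step is then to identify the cone. Introducing a central cochain-degree-$1$ variable $d\log\hbar$ with $(d\log\hbar)^2=0$ and identifying it with $\eps$, I would note that the DGLA $\cocone(\hbar\pd_{\hbar}\eps\co\widehat{\Rees}(\breve{L},W)\to\widehat{\Rees}(\breve{L},W)\eps)$ of Definition~\ref{coconesigmadef} is, as a $\Q\brh[d\log\hbar]$-module, $\widehat{\Rees}(\breve{L},W)\ten_{\Q\brh}\Q\brh[d\log\hbar]$, with bracket the $\Q\brh[d\log\hbar]$-linear extension of that on $\widehat{\Rees}(\breve{L},W)$ and differential $\delta$ twisted by $\tilde{\sigma}\,d\log\hbar$. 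Transporting this through the dictionary of (i)--(ii), twisting the differential of the deformation complex by $\tilde{\sigma}\,d\log\hbar$ corresponds exactly to base-changing $A$ to the $(\Q\brh[d\log\hbar],d_{\hbar})$-algebra $(A\brh[d\log\hbar],d_{\hbar})$ and simultaneously twisting the curved operad $\widehat{\Rees}(c\cP)[d\log\hbar]$ by $d_{\hbar}$, because on the Rees construction $d_{\hbar}$ acts precisely as $\tilde{\sigma}\,d\log\hbar$ (on $\Q\brh$ it sends $\hbar^n$ to $n\hbar^n\,d\log\hbar$ and it annihilates $d\log\hbar$). This gives the asserted isomorphism of DGLAs with the $(\Q\brh[d\log\hbar],d_{\hbar})$-linear deformation complex of $(A\brh[d\log\hbar],d_{\hbar})$ over $(\widehat{\Rees}(c\cP)[d\log\hbar],\delta+d_{\hbar})$.

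The remaining clauses should fall out of the construction. The involution on the cone is, on each summand, the involution $*$ of $\widehat{\Rees}(\breve{L},W)$ tensored with the identity of $\Q[d\log\hbar]$; under the dictionary this is induced by the involution $\hbar\mapsto-\hbar$ of $\widehat{\Rees}(c\cP)[d\log\hbar]$, which fixes $c\cP$, hence $A$, and fixes $d\log\hbar=\hbar^{-1}d\hbar$. And $\tilde{F}$ is, by Definition~\ref{Fdef}, the filtration associated to the weight grading of $\widehat{\Rees}(\breve{L},W)$, which is precisely the one determined by the filtration $W$ on $c\cP$. I expect the only genuine work to be the bookkeeping hidden in (i)--(ii): verifying that cobar and convolution commute with the \emph{completed}, product-form Rees construction rather than a naive one, and pinning down the sign relating the derivation $\tilde{\sigma}$ on the convolution DGLA to the weight derivation on the Rees co-(pr)operad and hence to $d_{\hbar}$. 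But this is the same bookkeeping already performed for Lemma~\ref{epslemma2}, so no new obstacle should arise.
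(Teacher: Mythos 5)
Your proposal matches the paper's treatment: the paper gives no separate argument, simply asserting that Lemma \ref{epslemma2} (itself obtained by applying the Rees construction to Lemma \ref{epslemma}, with the general form in Lemma \ref{epslemmagenl}) "adapts" to the filtered setting, and that adaptation is exactly the definitional unwinding you carry out — identifying $\widehat{\Rees}(\breve{L},W)$ with the $\Q\brh$-linear convolution/deformation complex over $\widehat{\Rees}(c\cP)$ and matching the cocone on $\hbar\pd_{\hbar}\eps$ with the $d_{\hbar}$-twist over $(\Q\brh[d\log\hbar],d_{\hbar})$. The only point genuinely requiring the care you flag is the involution clause, where the identification must be normalised so that the $\hbar$-flip fixing $c\cP$ really induces $*$ (i.e.\ the involution of $\C$, homotopic to $(-1)^m$ on $\gr^W_m$, is absorbed into the Rees weights exactly as in the graded case after Lemma \ref{epslemma2}); this is the same bookkeeping the paper itself leaves implicit, so your route introduces no new gap.
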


Similarly to the interpretation of Lemma \ref{epslemmagenl}, $\mmc (\cocone(\hbar\pd_{\hbar} \eps \co \tilde{F}^2\widehat{\Rees}(\breve{L},W) \to \widehat{\Rees}(\breve{L},W)\eps)^{*=\id})$ can be thus be thought of as the space of involutive  $(\widehat{\Rees}(c\cP)[d\log \hbar], \delta +d_{\hbar})$-algebras $B$ flat and complete over $(\Q\brh[d\log \hbar], d_{\hbar})$, with curvature (i.e. the image of $\hbar\C(0)= \hbar\cW_1\C$) lying in $(\hbar, d\log \hbar)B$ and the $\gr^W\cP$-algebra $B/(\hbar, d\log\hbar)$ having underlying $\gr^W_0\cP$-algebra $A$. In particular, $B[\hbar^{-1}]$ is a $(c\cP(\!(\hbar)\!) [d\log \hbar], \delta +d_{\hbar})$-algebra. 

\section{Vanishing cycles}\label{vanishsn}

As explained in Examples \ref{poissonex}, when working algebraically  over a field or in the $\Cx$-analytic setting, there is an equivalence
$ \Sp^{\ex}(\fX,-1) \simeq \Sp(\fX,-1) \by \H^0_{\dR}(\fX)$   by \cite[Proposition 3.2]{KinjoParkSafronov} or \cite[Proposition 2.2.11]{HHR1}, so that in particular every $(-1)$-shifted symplectic structure is  exact in a canonical way. As in Examples \ref{poissonex}, we thus have
$\cP^D(\fX,n)^{\nondeg} \simeq \Sp(\fX,-1) \by \H^0_{\dR}(\fX)$.

Deformation quantisation for $(-1)$-shifted symplectic structures is formulated in \cite{DQvanish}  for line bundles 
on derived Artin stacks, and then established for  self-dual non-degenerate quantisations of Gorenstein derived DM stacks when the determinant line  bundle
has a square root 
(which can be thought of as 
a spin structure or module of half-densities). As explained in \cite[\S 4.4]{DStein}, those results extend verbatim to analytic and $\C^{\infty}$ settings, with the $\C^{\infty}$ case spelt out in \cite[\S \ref{DQDG-quantneg1sn}]{DQDG}.

In this section, we will look at the distinguished choice of quantisation associated to the canonical exact structure on a $(-1)$-shifted symplectic structure, and  relate it to the perverse sheaf of vanishing cycles from \cite{BBDJS}, but first in \S \ref{Dinvsn} we will give an equivalent formulation of self-dual quantisations not requiring square roots.   

\subsection{The anti-involutive ring of twisted differential operators}\label{Dinvsn}

Since the BV operad does not carry an involution,  Example \ref{quantinterpretnex}.(\ref{BVex}) does not recover the setting to which we can apply Corollary \ref{exactquantbrhcor}. Instead, elaborating on Example \ref{quantex}.(\ref{quantexvanish}), the solution in \cite[Definition \ref{DQvanish-selfdualdef}]{DQvanish} is to take a line bundle $\sL$ on $\fX$ equipped with a right $\sD$-module structure on $\sL^{\ten 2}$. 

Then $\oR\sHHom_{\sO}(-,\sL^{\ten 2})$ gives an anti-involution (up to coherent homotopy) of the filtered dg category of perfect $\sO_{\fX}$-modules and differential operators. If we write $\sD(\sE)$ for the ring of differential operators on a perfect complex $\sE$ (quasi-isomorphic to $\sE\ten_{\sO}\sD\ten_{\sO}\oR\sHHom_{\sO}(\sE,\sO)$), we thus have an anti-involution $t$ on $\sD(\sE)$ whenever $\sE$ is equipped with a non-degenerate symmetric pairing $\sE\ten\sE \to \sL^{\ten 2}$. In particular, we have a homotopy involution $t \co \sD(\sL)^{\op} \simeq \sD(\sL)$, compatible with the respective order filtrations $F$.

We now digress a little,  generalising that setup slightly to show that the formulation and existence of self-dual $(-1)$-shifted deformation quantisations does not need such line bundles $\sL$ to exist. However, a choice of $\sL$ is needed to associate $\sD(\!(\hbar)\!)$-modules, and hence perverse sheaves, to such quantisations.

\subsubsection{Constructing self-dual twisted differential operators canonically}\label{Dinvconstrnn}

When the complete  graded DGLA $(L,\cW)$ of  \S \ref{genpoissstrsn} is a DGLA $\widehat{\Pol}(-,-1)$ of $(-1)$-shifted polyvectors, we have the following canonical construction of a  filtered involutive DGLA  $(L^{\smile,sd} ,V,i)$ satisfying the conditions for  $\breve{L}$ as in \S \ref{quantsn}, generalising the construction of the dg algebra $\sD(\sqrt{\sK_X})$ of differential operators on a square root of the determinant bundle.

\begin{setting}
All we need is for the graded DGLA $\bigoplus_{j \ge 0} \cW_{j-1}L$ to underlie a $\bG_m$-equivariant unital $P_1^{ac}$-algebra structure (where $j$ indexes the new weight grading, corresponding to $j$-vectors when $L= \widehat{\Pol}(-,-1)$), such that  $\cW_0L$ is cofibrant  as a module over the CDGA $\cW_{-1}L$, and such that  the natural maps 
$
\Symm^j_{\cW_{-1}L}\cW_0L \to \cW_{j-1}L
$
induced by the commutative multiplications  are quasi-isomorphisms.
\end{setting}

\begin{definition}
We can then form a DG Lie--Rinehart algebra structure $(\cW_{-1}L, V_0L^{\smile,\inv})$  on the pair  $(\cW_{-1}L, \cW_{-1}L \oplus \cW_0L)$ as follows.

The Lie bracket on  $V_0L^{\smile,\inv}$ is given by regarding it as a sub-DGLA of $L$, and there is an anchor map $V_0L^{\smile,\inv} \to \DDer(\cW_{-1}L, \cW_{-1}L)$ given by  taking the projection  $V_0L^{\smile,\inv} \to \cW_0 L$
composed with the adjoint map $\cW_0L \to \DDer(\cW_{-1}L, \cW_{-1}L)$ associated to the Lie bracket. The left $\cW_{-1}L$-module structure on $V_0L^{\smile,\inv}$ is then given by
\[
 f\star (g,u):= (fg + \half [f,u], fu).
\]
\end{definition}

Note that the unit $1 \in \cW_{-1}L$ is  a central element in $V_0L^{\smile,\inv}$ with respect to the Lie bracket.

\begin{definition}
We then adapt \cite[Definition 2.2.1]{ginzburgLectDmods} by  defining the associative algebra  $L^{\smile,\inv}$ to be the quotient of the universal enveloping algebra of the dg Lie-Rinehart algebra $(\cW_{-1}L,V_0L^{\smile,\inv})$ by the relation identifying $1 \in \cW_{-1}L$ with the unit of the universal enveloping algebra. Explicitly, this means that $L^{\smile,\inv}$ is  the quotient of the universal enveloping algebra $\cU$ of the Lie algebra  $ V_0L^{\smile,\inv}$ by the relations $1_{\cW_{-1}L}= 1_{\cU}$ and  $f\ten \theta = f\star \theta$ for $f \in \cW_{-1}L$, $\theta \in   V_0L^{\smile,\inv}$.
\end{definition}

\begin{definition}
We then define an increasing filtration on $L^{\smile,\inv}$  given by setting $ V_{-1}L^{\smile,\inv}:= \cW_{-1}L$ and  defining $V_{j-1}L^{\smile,\inv}$ to be the image of $(V_0L^{\smile,\inv})^{\ten j}$ under multiplication for $j\ge 1$.
\end{definition}
This satisfies $(V_pL^{\smile,\inv})\cdot  (V_ qL^{\smile,\inv})\subset V_{p+q+1}L^{\smile,\inv}$, and the cofibrancy condition on the  $\cW_{-1}L$-module $\cW_0L$ ensures that $\gr^V_pL^{\smile,\inv} \cong \Symm^{p+1}_{\cW_{-1}L}\cW_0L \simeq \cW_pL$. In particular, $\gr^VL^{\smile,\inv}$ is commutative, so the commutator bracket satisfies $[V_pL^{\smile,\inv}, V_qL^{\smile,\inv}]\subset V_{p+q}L^{\smile,\inv}$.

The properties so far are common to many other constructions; for instance, replacing the product $\star$ with the left action $f\cdot (g,u):= (fg , fu)$ generalises the construction of untwisted differential operators. What sets $L^{\smile,\inv}$ apart is that it carries an anti-involution $t$. 

\begin{definition}
Specifically, define $t$ on $V_0L^{\smile,\inv}$ by setting $(g,u)^t:=(g,-u)$ for  $(g,u) \in  \cW_{-1}L \oplus \cW_0L$.  
We then have $[\theta,\phi]^t=-[\theta^t,\phi^t]$ for $\theta,\phi \in V_0L^{\smile,\inv}$, 
and $(f\star \theta)^t= f^t\star \theta^t -[f^t,\theta^t] = (-1)^{\deg f\deg \theta} \theta^t\star f^t$  for $f \in \cW_{-1}L$, where the product $ \theta^t\star f^t$ refers to multiplication inside  $L^{\smile,\inv}$.
Since we also have $1^t=1$, the composition of $t$ with the inclusion $V_0L^{\smile,\inv} \into L^{\smile,\inv}$ thus extends to an associative algebra homomorphism $t \co L^{\smile,\inv} \to (L^{\smile,\inv})^{\op}$.
\end{definition}

The map $t$ is necessarily an involution, because its restriction to generators $V_0L^{\smile,\inv}$ is so. On the associated graded pieces $\gr^V_pL^{\smile,\inv}$, multiplicativity of $t$ ensures that it acts as multiplication by $(-1)^{p+1}$.
We thus have a filtered involutive DGLA $(\breve{L},V,i)$ satisfying the conditions of \S \ref{quantsn}, taking $i$ to be $-t$ and $[-,-]$ to be the commutator bracket.

\begin{examples}\label{Dinvex}
 For a cofibrant $R$-CDGA $A$ with perfect cotangent complex, the conditions imposed on $\bigoplus_{j\ge 0} \cW_{j-1}L$ above are satisfied by
 the dg Poisson algebra  $\Pol(A,-1)$ given by  $\bigoplus_{j\ge 0}\HHom_A(\CoS^j_A\Omega^1_{A/R},A)$, where $\CoS$ denotes cosymmetric powers. Then $L=\prod_{j\ge 0} \cW_{j-1}L$ is the algebra $\widehat{\Pol}(A,-1)$ of multiderivations from  \cite[Definition \ref{poisson-bipoldef}]{poisson}. Since $\mmc(\prod_{j\ge 2} \cW_{j-1}L)$ is the space of $(-1)$-shifted Poisson structures on $A$, applying the construction of \S \ref{quantsn} to $\sD^{\inv}_A:=\widehat{\Pol}(A,-1)^{\smile,\inv}$ gives us  
 a canonical space of self-dual twisted quantisations.
 
The conditions are also satisfied if we take $A$ to be an $[m]$-diagram (a string of morphisms) which is cofibrant and fibrant in the injective diagram model structure, by \cite[Definition \ref{poisson-calcTOmegalemma}]{poisson}. That gives sufficient functoriality to 
 construct 
 $\sD(\fX)^{\inv}$
 for derived DM $n$-stacks $\fX$ with perfect cotangent complexes, as derived global sections of $\widehat{\Pol}(-,-1)^{\smile,\inv}$ over a category of \'etale diagrams as in \cite[\S \ref{poisson-hgpdsn}]{poisson}.
 
 Replacing $\Omega^1$ with EFC and $\C^{\infty}$ cotangent complexes gives the corresponding constructions for cofibrant EFC and $\C^{\infty}$ DGAs, and hence for derived analytic and $\C^{\infty}$  DM $n$-stacks in the sense of \cite{DStein}. 
 
 The construction also works in more general tensor categories, including double complexes. Using stacky CDGAs (a.k.a. graded mixed CDGAs) then taking sum-product total complexes as in \cite[\S \ref{poisson-Artinsn}]{poisson} similarly gives a $BD_1^{\pre}$-algebra (in the notation of Example \ref{quantinterpretnex}.(\ref{BD1ex})) $\sD^{\inv}(\fX)$
 for derived Artin $n$-stacks $\fX$ (algebraic, analytic or $\C^{\infty}$) with perfect cotangent complexes.
\end{examples}

\subsubsection{Characterisations of \tps{$\sD^{\inv}$}{Dinv} and its modules}

The following extends \cite[Definition 2.2.1]{ginzburgLectDmods} to the derived setting:
\begin{definition}
A presheaf $(\sD',F)$ of twisted differential operators (TDO) on a derived DM $n$-stack $X$ (algebraic, analytic or $\C^{\infty}$)  is an exhaustively  filtered
presheaf $\sD'$  of $BD_1^{\pre}$ algebras (see Example \ref{quantinterpretnex}.(\ref{BD1ex})) on the site of affine objects \'etale over $X$, with $F_{-1}\sD'=0$, together with a quasi-isomorphism of graded $P_1^{ac}$-dgas 
$\phi \co \gr^F \sD' \to  \Pol(\sO_X,-1)$,  the complex 
of $(-1)$-shifted multiderivations as in Examples \ref{Dinvex} 
(algebraic over a chosen base, EFC or $\C^{\infty}$, respectively).
\end{definition}

\begin{remarks}
The presheaf $\sD'$ is automatically a hypersheaf, since $\gr^F\sD'$ is quasi-coherent.

  When $\oL\Omega^1_{\sO_X}$ is perfect, note that  $\gr^F\sD'$ is quasi-isomorphic to the  symmetric algebra of the tangent complex. 
  
  Although the definition of TDOs depends on the relevant notion of cotangent complex in analytic and smooth settings, note that $\sD'$ is just assumed to be a hypersheaf of abstract $BD_1^{\pre}$-algebras, with no functional calculus. Although $\gr^F_0\sD' \simeq \sO$ is then unwieldy as an abstract commutative algebra,  $\sD'$  still behaves sufficiently well to establish   Proposition \ref{Dinvuniqueprop} below because it only relies on the behaviour of $\gr^F\sD'$ as a commutative algebra over $\gr^F_0\sD'$. 
\end{remarks}

\begin{examples}
 The prototypical example of a TDO is the ring $\sD_X$ of differential operators, equipped with the order filtration $F$. Another example is given by the ring $\sD^{\inv}_X:= \widehat{\Pol}(\sO_X,-1)^{\smile,\inv}$ of Examples \ref{Dinvex}, equipped with the filtration $F_p=V_{p-1}$.
\end{examples}

\begin{definition}\label{Addef}
The adjoint action $\Ad$ of $\oR\map(X,\bG_m)$ on the Dold--Kan denormalisation of
any TDO $\sD'$ on $X$ is induced by the homomorphisms $\Ad_{\lambda}(\theta):= \lambda\theta \lambda^{-1}$ for $\lambda \in \z_0\sO_X^{\by}$ and $\theta \in \sD'$. 

Explicitly, a model for the simplicial group $\oR\bG_m(\sO_X)$ is given by $n \mapsto \z_0( (F_0\sD') \ten \Omega^{\bt}(\Delta))^{\loc,\by}$, where $\loc$ denotes localisation over $\H_0\sO_X$. This group has an adjoint action on the corresponding localisation $\z_0(\sD' \ten \Omega^{\bt}(\Delta))^{\loc}$  along  $\z_0(F_0\sD' \ten \Omega^{\bt}(\Delta))  \to \H_0\sO_X$, which is weakly equivalent to the Dold--Kan denormalisation of $\sD'$. (Left and right localisations agree in this case, because commutators act nilpotently.)  
\end{definition}

\begin{definition}
 Given a TDO $(\sD',F,\phi)$, define the opposite TDO $(\sD',F,\phi)^{\op}$ to be given by the opposite algebra $((\sD')^{\op},F)$ together with the composite  quasi-isomorphism $(\sD')^{\op} \xra{\phi^{\op}} \Pol(\sO_X,-1)^{\op} \xra{(-1)^w} \Pol(\sO_X,-1)  $, where $(-1)^w$ is the Poisson algebra isomorphism which multiplies terms of weight $m$ (i.e. $m$-vectors) by $(-1)^m$.
 \end{definition}

Simplicially localising the category of TDOs on $X$ at the class of filtered quasi-isomorphisms gives an $\infty$-category, and the space $\TDO(X)$ of TDOs on $X$ is then the simplicial set given by the nerve of its maximal $\infty$-subgroupoid. We then have:
\begin{proposition}\label{Dinvuniqueprop}
For a derived DM $n$-stack  $X$ (algebraic, analytic or $\C^{\infty}$) with perfect cotangent complex:
 \begin{enumerate}
  \item\label{one} the space $\TDO(X)$ is a torsor for $\mmc(F^1\DR(X)[1])$; 
  
  \item the action of $\mmc(F^1\DR(X)[1])$ on $\TDO(X)$ is $C_2$-equivariant, for the actions generated respectively by  $- 1$  acting multiplicatively on $F^1\DR(X)$ and by the endofunctor $\op$ of the category of TDOs;
  
  \item the action $\Ad$ of $\oR\map(X,B\bG_m)$ on $\TDO(X)$ is homotopic to the composite of the group homomorphism $d\log \co \sO_X^{\by} \to   F^1\DR(X)[1]$ with the torsor action (\ref{one}).
  \end{enumerate} 
\end{proposition}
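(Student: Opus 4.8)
The plan is to prove all three assertions by reducing to a local statement over affines \'etale over $X$, establishing that statement as a derived and filtered refinement of the Beilinson--Bernstein classification of twisted differential operators, and then globalising by hypersheaf descent. Conceptually the local statement says that a TDO is a deformation quantisation, along the order filtration, of the commutative algebra $\Pol(\sO_X,-1)$ ($=\Sym$ of the tangent complex) equipped with its canonical Poisson bracket; since the associated symplectic structure on $T^*X$ is exact, with the $\bG_m$-scaling supplying the formal derivation of \S\ref{exactsn}, the rigidity of \S\ref{exactsn} forces the moduli of such quantisations to be controlled by de Rham data, which is assertion (1).

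For (1), take as basepoint the canonical TDO $\sD_X^{\inv}=\widehat{\Pol}(\sO_X,-1)^{\smile,\inv}$ of \S\ref{Dinvconstrnn}. Over an affine $U=\Spec A$ \'etale over $X$, with $A$ cofibrant (in the relevant algebraic, EFC or $\C^{\infty}$ sense) with perfect cotangent complex, the space of TDOs is, by deformation theory and the operadic description of Lemma \ref{epslemmaquant} applied to the curved operad $cBD_1^{\pre}$ of Example \ref{quantinterpretnex}.(\ref{BD1ex}) with $\cQ$ the (poly)differential operators, the Maurer--Cartan space $\mmc(\mathfrak g_A)$ of the DGLA $\mathfrak g_A$ governing filtered $BD_1^{\pre}$-structures with associated graded $\Pol(A,-1)$. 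The essential calculation is that $\mathfrak g_A$ is filtered-quasi-isomorphic to the abelian DGLA $F^1\DR(A)[1]$. This is proved in two steps: first, a TDO is reconstructed from its order-$\le 1$ part, which is a Picard Lie--Rinehart algebra $0\to A\to F_1\sD'\to \mathbb{T}_A\to 0$ (here the hypotheses of \S\ref{Dinvconstrnn} --- cofibrancy of $\cW_0L$ over $\cW_{-1}L$ and the quasi-isomorphisms $\Sym^j_{\cW_{-1}L}\cW_0L\to\cW_{j-1}L$, which hold when the cotangent complex is perfect --- guarantee that the universal enveloping construction of \S\ref{Dinvconstrnn} has the correct associated graded, so this assignment is an equivalence of spaces); second, an unwinding identifies the space of such Picard algebroids with $\mmc(F^1\DR(A)[1])$, an $\sO$-linear splitting being a connection, its curvature a $2$-form, the Jacobi identity its closedness, and the de Rham differential supplying the higher-weight terms. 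Since $F^1\DR(A)[1]$ is abelian, $\mmc(F^1\DR(A)[1])$ is a simplicial abelian group acting freely transitively on $\mmc(\mathfrak g_A)\simeq\TDO(U)$ by adding a closed form to the curvature (equivalently, translating the Picard algebroid). All of this is functorial for \'etale maps, and both $\TDO(-)$ and $\mmc(F^1\oL\DR(\sO)[1])$ are hypersheaves on the site of affines \'etale over $X$ (the former by the Remarks following the definition of TDO), so passing to derived global sections --- using the comparison $\oR\Gamma(X_{\et},F^1\oL\DR(\sO))\simeq F^1\DR(X)$ of Examples \ref{poissonex}, and that $\mmc$ of an abelian complex commutes with this homotopy limit --- gives the torsor structure of (1).

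Assertions (2) and (3) then follow by tracking these identifications. For (2), the endofunctor $\op$ sends $(\sD',F,\phi)$ to $((\sD')^{\op},F,(-1)^w\circ\phi^{\op})$; on the associated Picard algebroid this is the anti-involution $t$ of \S\ref{Dinvconstrnn}, $(g,u)^t=(g,-u)$, which together with the $(-1)^w$ correction acts on the curvature $2$-form, hence on the coordinate in $\mmc(F^1\DR(X)[1])$, as multiplication by $-1$; thus the twisting action is $C_2$-equivariant for the two involutions in the statement. For (3), Definition \ref{Addef} realises the inner-automorphism action $\Ad_\lambda(\theta)=\lambda\theta\lambda^{-1}$ of $\z_0\sO_X^{\by}$; on the Picard algebroid this shifts a section $(g,\xi)$ by $\langle d\log\lambda,\xi\rangle$, i.e. it is translation by the closed $1$-form $d\log\lambda$, which under the identification of (1) is precisely the corresponding path in $\mmc(F^1\DR(X)[1])$. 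Delooping and globalising (the $\oR\map(X,B\bG_m)$-action on $\TDO(X)$ being the delooping of this inner action, and $d\log$ delooping to the first Chern class $\oR\map(X,B\bG_m)\to\mmc(F^1\DR(X)[1])$), and using that $\Ad$ commutes with twisting so that it acts by a genuine translation, identifies the $\Ad$-action with the torsor action of (1) precomposed with $d\log$, as claimed.

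The main obstacle is the local identification $\mathfrak g_A\simeq F^1\DR(A)[1]$ --- the derived, filtered version of the statement that TDOs on a smooth variety are classified by a truncation of de Rham cohomology. The delicate point is showing that the order-$\ge 2$ part of a TDO is determined, up to contractible choice, by its order-$\le 1$ part, so that the deformation complex genuinely collapses to the $2$-form and de Rham data; this is exactly what the hypotheses of \S\ref{Dinvconstrnn} and perfectness of the cotangent complex deliver, and arranging the equivalence to be natural enough to sheafify requires some bookkeeping.
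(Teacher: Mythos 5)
Your route is genuinely different from the paper's, but as it stands it has a gap at exactly the point you flag as the ``main obstacle''. Your argument hinges on two local claims: (i) a TDO is recovered, up to contractible choice, from its order-$\le 1$ part, viewed as a Picard--Lie--Rinehart extension $0\to A\to F_1\sD'\to \bT_A\to 0$; and (ii) the space of such extensions is $\mmc(F^1\DR(A)[1])$. Neither is proved, and (i) in particular does not follow from the hypotheses of \S\ref{Dinvsn} or from perfectness of the cotangent complex: those hypotheses are used to \emph{construct} the one specific anti-involutive TDO $\sD^{\inv}$ from a given Lie--Rinehart algebra, not to show that an \emph{arbitrary} abstract TDO is determined by order-$\le 1$ data. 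Worse, with the paper's definition a TDO is only an abstract filtered $BD_1^{\pre}$-algebra with a quasi-isomorphism $\gr^F\sD'\to \Pol(\sO_X,-1)$; there is no strict $\sO_X$-algebra structure, $F_0\sD'$ is merely quasi-isomorphic to $\sO_X$ as an abstract commutative algebra (with no EFC or $\C^{\infty}$ functional calculus in the analytic and smooth settings), and the module structure, anchor map and splitting you want on $F_1\sD'$ exist only up to coherent homotopy. So the strict short exact sequence over $\sO$, the ``$\sO$-linear splitting = connection, curvature = closed $2$-form'' dictionary, and the derived PBW/rigidity statement that the order-$\ge 2$ part is contractible all require real arguments that the proposal does not supply; the contractibility statement is essentially equivalent to the acyclicity computation that carries the whole proof.

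For comparison, the paper avoids the Picard-algebroid intermediary altogether: via the Rees construction it identifies $\TDO(X)$ with the space of $\bG_m$-equivariant graded $BD_1$-algebra deformations of $\Pol(\sO_X,-1)$ over $R[\hbar]$, and the tower of fibre sequences controlled by the weight-graded derivation complexes $\oR\DDer_{P_1^{ac}}^{\bG_m}(\Pol(\sO_X,-1),\hbar^k\Pol(\sO_X,-1))$ collapses (by the weight computations imported from \cite{DQpoisson}) to a first-order problem; the same computation, through the explicit map $\mu$ sending $adb_1\cdots db_p$ to the multiderivation $(v_1,\dots,v_p)\mapsto a[b_1,v_1]\cdots[b_p,v_p]$, identifies that first-order complex with $F^1\DR(X)[1]$, giving the torsor structure, and non-emptiness is supplied by $\sD_X$ and $\sD^{\inv}_X$. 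This is why the paper's remark after the definition of TDO stresses that only the behaviour of $\gr^F\sD'$ as an algebra over $\gr^F_0\sD'$ is ever used. Your treatment of parts (2) and (3) (the sign on the curvature under $\op$, and $\Ad_\lambda$ acting by translation by $d\log\lambda$) is in the right spirit and matches the paper's computations, but it is conditional on (1); to make the whole proposal sound you would either need to prove the derived Beilinson--Bernstein reconstruction claims (i)--(ii) honestly, or switch to a graded deformation-theoretic argument of the paper's type.
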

\begin{proof}
Via the Rees construction $\Rees(\sD',F) :=  \bigoplus F_i\sD'\hbar^i$, $BD_1^{\pre}$-algebras over $R$ with non-negatively indexed increasing filtrations correspond to non-negatively  graded $\hbar$-torsion-free $BD_1$-algebras over $R[\hbar]$, for $\hbar$ having weight $1$. The space $\TDO(X)$ thus corresponds to the space of graded $BD_1$-algebras $E$ over $R[\hbar]$ equipped with a graded $P_1^{ac}$-algebra quasi-isomorphism $E\ten^{\oL}_{R[\hbar]}R \simeq \Pol(\sO_X,-1)$.

Since $F_{-1}\sD'=0$, the filtration is complete, so $\Rees(\sD',F)$ is the $\bG_m$-equivariant limit of the $BD_1/\hbar^k$-algebras $\Rees(\sD',F)/\hbar^k$, and we can regard this as a deformation problem. We can then adapt \cite[\ref{DQpoisson-defPprop} and Remark \ref{DQpoisson-filteredoperadrmk2}]{DQpoisson},  relaxing the connectivity property using the argument of \cite[Corollary \ref{coddt-cofalgdefcor}]{coddt}; together, these give us a sequence of homotopy fibre sequences
\[
 \Alg^{\bG_m}(BD_1/\hbar^{k+1})_P \to \Alg^{\bG_m}(BD_1/\hbar^k)_P \to \mmc(\oR\DDer_{P_1^{ac}}^{\bG_m}(P,\hbar^kP)[1])
\]
for any $\bG_m$-equivariant $P_1^{ac}$-algebra $P$ satisfying $\H_{\ll 0}\cW_iP=0$ for all $i$. Here $\Alg^{\bG_m}(BD_1/\hbar^k)_P$ denotes the space of $\bG_m$-equivariant $BD_1/\hbar^k$-algebras $E$ with a quasi-isomorphism $E\ten^{\oL}_{R[\hbar]/\hbar^k}R \simeq P$, and the complex of derivations is regarded as an abelian DGLA.

When $P=\Pol(\sO_X,-1)$, the calculations of \cite[Proposition \ref{DQpoisson-weightprop}]{DQpoisson} imply that the complex of derivations is acyclic for $k \ge 2$, so this reduces to a first order deformation problem. In other words, 
\[
 \TDO(X) \simeq \oR\Gamma(X, \Alg^{\bG_m}(BD_1/\hbar^2)_{\Pol(\sO_X,-1)}.  
\]
Since $\TDO(X)$ is non-empty (elements include the ring of differential operators and $\sD^{\inv}$), the fibre sequence for $k=1$ then implies that $\TDO(X)$ is a torsor for $\oR\Gamma(X,\mmc(\oR\DDer_{P_1^{ac}}^{\bG_m}(P,\hbar P))$. 

We have a morphism from $F^1\DR(\sO_X)[1]$ to the complex of  Poisson $L_{\infty}$-derivations of $\Pol(\sO_X,-1)$ with $\bG_m$-weight $-1$ (cf. \cite[Definition \ref{DQpoisson-PoissonLinftydef}]{DQpoisson}) given by sending $adb_1\ldots db_p$ to the $p$-derivation $(v_1, \ldots,v_p) \mapsto a[b_1,v_1]\cdots[b_p,v_p]$. By \cite[Lemma \ref{DQpoisson-polyvectorlemma}]{DQpoisson}, the resulting map 
\[
\mu \co  F^1\DR(\sO_X)[1] \to \oR\DDer_{P_1^{ac}}^{\bG_m}( \Pol(\sO_X,-1),\hbar \Pol(\sO_X,-1) )
\]
gives quasi-isomorphisms on associated graded pieces  $\gr_F^p$, so is a quasi-isomorphism. Thus $\TDO(X)$ is a torsor for $\mmc(F^1\DR(X)[1])$.

It remains to prove the second and third statements.

The action of 
$\hbar\alpha \in \oR\Gamma(X,\mmc(\oR\DDer_{P_1^{ac}}^{\bG_m}(P,\hbar P))$ on  $\Alg^{\bG_m}(BD_1/\hbar^2)_P$ sends $E$ to $E^{\alpha}:= (E, \delta_E +\hbar\alpha)$. Then $(E, \delta_E +\hbar\alpha)^{\op}= (E^{\op}, \delta_{E^{\op}} +\hbar\alpha^{\op})$, with  $\alpha^{\op}$ the derivation from $P^{\op}$ to $P^{\op}$ corresponding to $\alpha$. Since $\alpha$ has odd weight, $\alpha^{\op}$ corresponds to $-\alpha$ under the isomorphism $(-1)^w \co P \to P^{\op}$, so  $(E^{\alpha})^{\op}= (E^{\op})^{-\alpha}$, giving the second statement.

For the final statement, observe that for $E \in  \Alg^{\bG_m}(BD_1/\hbar^2)_P$ and an invertible element $\lambda$ in $E$ of $\bG_m$-weight $0$, we have $\lambda \theta \lambda^{-1} = \theta + [\lambda,\theta]\lambda^{-1}$ for all $\theta \in E$, with the operation $[\lambda,-]\lambda^{-1}$ squaring to zero, so $\Ad_{\lambda}= \exp([\lambda,-]\lambda^{-1})$. 
The derivation $[\lambda,-]\lambda^{-1}$ is given by $\mu(d\log \lambda)$ under the quasi-isomorphism $\mu$ above, which implies the third statement.
\end{proof}

\begin{corollary}\label{Dinvuniquecor}
For a derived DM $n$-stack  $X$ with perfect cotangent complex, the action of $\mmc(F^1\DR(X))$ on $\sD^{\inv} \in \TDO(X)$ gives a $C_2$-equivariant equivalence $\mmc(F^1\DR(X)[1]) \simeq \TDO(X)$ (intertwining the actions of $-1$ and $\op$).

In particular, up to coherent homotopy, $\sD^{\inv}_X$ is unique TDO $(\sD',F,\phi)$ with anti-involution $t \co (\sD',F,\phi) \to (\sD',F,\phi)^{\op}$.
\end{corollary}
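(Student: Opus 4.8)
The plan is to deduce everything from Proposition~\ref{Dinvuniqueprop} by a homotopy fixed point argument for the $C_2$-action generated by $\op$. First I would dispose of the first assertion. By part~(\ref{one}) of Proposition~\ref{Dinvuniqueprop}, $\TDO(X)$ is a torsor for the simplicial abelian group $G:=\mmc(F^1\DR(X)[1])$, and by its second part this torsor action is $C_2$-equivariant, with $C_2$ acting on $G$ by multiplication by $-1$ and on $\TDO(X)$ through $\op$. The construction of \S\ref{Dinvconstrnn}, run as in Examples~\ref{Dinvex}, equips $\sD^{\inv}_X$ with a strict anti-involution $t\co \sD^{\inv}_X\to(\sD^{\inv}_X)^{\op}$ acting by $(-1)^p$ on $\gr^F_p$, hence a morphism of TDOs to the opposite TDO; being a strict involution, $(\sD^{\inv}_X,t)$ presents a point of the homotopy fixed point space $\TDO(X)^{hC_2}$. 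Transporting the torsor structure along this $C_2$-fixed basepoint then produces the required $C_2$-equivariant equivalence $G\xra{\sim}\TDO(X)$, $g\mapsto g\cdot\sD^{\inv}_X$.

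For the final clause I would note that a TDO together with an anti-involution to its opposite, coherent in the sense that it is an involution up to coherent homotopy, is exactly a point of $\TDO(X)^{hC_2}$; so the claim is that this space is contractible. Taking homotopy fixed points of the equivariant $G$-torsor $\TDO(X)$ shows that $\TDO(X)^{hC_2}$ is, if nonempty, a torsor for $G^{hC_2}$, and it is nonempty since it contains $(\sD^{\inv}_X,t)$. It then remains to see $G^{hC_2}\simeq \ast$: here $G$ is a simplicial $\Q$-vector space on which $C_2$ acts by inversion, so every term $H^s(C_2;\pi_\ast G)$ of the homotopy fixed point spectral sequence vanishes ($2$ is invertible and the sign representation has no invariants), whence $\pi_\ast(G^{hC_2})=0$. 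Thus $\TDO(X)^{hC_2}$ is contractible, which is precisely the stated uniqueness of $\sD^{\inv}_X$ up to coherent homotopy among TDOs carrying an anti-involution.

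The step I expect to be the main obstacle is the bookkeeping needed to upgrade the strict involution $t$, defined levelwise on each $\widehat{\Pol}(-,-1)^{\smile,\inv}$, to a genuine point of $\TDO(X)^{hC_2}$ once one forms the derived global sections (or the sum--product total complex, in the Artin case) defining $\sD^{\inv}(X)$. I expect this to be harmless: the construction of \S\ref{Dinvconstrnn} is functorial in the \'etale diagram and strictly intertwines the two $C_2$-actions, so $\oR\Gamma$ (resp.\ $\hatTot$) carries a strict $C_2$-fixed point to a strict $C_2$-fixed point; concretely, it is the same $C_2$-equivariance of the quasi-isomorphism $\mu$ already exploited in the proof of Proposition~\ref{Dinvuniqueprop}, so making that equivariance explicit is all that is needed. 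A minor point also to record is that passing from the strict action to the homotopy action loses nothing, since $\TDO(X)$ is the nerve of an $\infty$-groupoid carrying a strict $C_2$-action.
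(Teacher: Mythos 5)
Your proposal is correct and takes essentially the same route as the paper: the (strict) anti-involution $t$ exhibits $\sD^{\inv}_X$ as a $C_2$-fixed point, so the torsor action gives the equivariant equivalence, and uniqueness follows from the contractibility of the $C_2$-homotopy fixed points of $\mmc(F^1\DR(X)[1])$, i.e. $\oR\Gamma(C_2,F^1\DR(X))\simeq 0$ for the sign action since $2$ is invertible. Your additional remarks (the homotopy fixed point spectral sequence, and the strict $C_2$-equivariance surviving $\oR\Gamma$) merely spell out what the paper leaves implicit.
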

\begin{proof}
 The first statement follows because $\sD^{\inv}$ carries an anti-involution, so is a fixed point for the $C_2$ action. The second statement then follows because  $\oR\Gamma(C_2,F^1\DR(X))\simeq 0$ for the multiplication action by $-1$.
 \end{proof}


\begin{definition}
 Given a line bundle $\sM$ on $X$, define the TDO $\sD_X(\sM)$ to be the twist of $\sD_X$ by the corresponding $\bG_m$-torsor $P(\sM)$ via the adjoint action $\Ad$ of Definition \ref{Addef}, i.e.   $(\sD_X\by P(\sM))/\bG_m$. This is equivalent to the ring of differential operators on $\sM$.
 
\end{definition}

\begin{definition}
 Roots of unity act trivially under the  action $\Ad$ of Definition \ref{Addef}, in particular giving a square root $\sqrt{\Ad}$ of the action $\Ad$ via the \'etale Kummer sequence $1 \to \mu_2 \to \bG_m \xra[{[2]}]{} \bG_m \to 1$.
 
 We then define $\sD_X(\sqrt{\sM})$ to be the twist of $\sD_X$ by $P(\sM)$ via the square root action $\sqrt{\Ad}$.
 \end{definition}
Note that if $\sL^{\ten 2}\simeq \sM$ then $\sD_X(\sL) \simeq \sD_X(\sqrt{\sM})$, but that the latter is defined irrespective of the choice or existence of such a square root. We can think of $\sD_X(\sqrt{\sM})$ as the ring of differential operators on the $\mu_2$-twisted line bundle $\sqrt{\sM}$, and the construction generalises to more general $\mu$-twisted line bundles since $\Ad$ factors through $\bG_m/\mu_{\infty}$.

\begin{corollary}\label{Dinvsqrtcor}
If  $c(\sM) \in \H^2(F^1\DR(X))$ is the Chern class of a line bundle $\sM$  on $X$ equipped with  a right $\sD$-module structure, then under the action of Proposition \ref{Dinvuniqueprop} we have $\sD_X^{\inv} \simeq \sD_X^{c(\sM)/2} \simeq \sD_X(\sqrt{\sM})$. 
\end{corollary}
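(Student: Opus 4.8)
The plan is to reduce the statement to a fixed-point computation inside the torsor $\TDO(X)$ of Proposition~\ref{Dinvuniqueprop}, using the base point $\sD_X$ and the involution $\op$ of the category of TDOs.

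First I would record that $\sD_X(\sM)\simeq\sD_X^{c(\sM)}$ and $\sD_X(\sqrt{\sM})\simeq\sD_X^{c(\sM)/2}$, where $\sD_X^{\alpha}$ denotes the result of acting by $\alpha\in\mmc(F^1\DR(X)[1])$ on $\sD_X$. Indeed, $\sD_X(\sM)$ is by definition the twist of $\sD_X$ by the $\bG_m$-torsor $P(\sM)$ along $\Ad$, and $\sD_X(\sqrt{\sM})$ the twist along $\sqrt{\Ad}$; by Proposition~\ref{Dinvuniqueprop}(3) the action $\Ad$ is the composite of $d\log$ with the torsor action of Proposition~\ref{Dinvuniqueprop}(1), the map $d\log$ sends the class of $\sM$ to $c(\sM)\in\H^2(F^1\DR(X))=\pi_0\mmc(F^1\DR(X)[1])$, and the square-root action halves this class because $2$ acts invertibly on the $\Q$-linear group $\mmc(F^1\DR(X)[1])$. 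Next I would locate $\sD_X^{\op}$ in the torsor: the right $\sD$-module structure on $\sM$ supplies the Grothendieck--Verdier duality $\sD(\sL)^{\op}\simeq\sD(\sHHom_{\sO}(\sL,\sM))$, which for $\sL=\sO_X$ gives $\sD_X^{\op}=\sD_X(\sO_X)^{\op}\simeq\sD_X(\sM)\simeq\sD_X^{c(\sM)}$. Feeding this into the $C_2$-equivariance of Proposition~\ref{Dinvuniqueprop}(2), under which $\op$ intertwines the action of $\alpha$ with that of $-\alpha$, yields $(\sD_X^{\alpha})^{\op}\simeq\sD_X^{c(\sM)-\alpha}$ for all $\alpha$; hence $\sD_X^{\alpha}$ is a homotopy fixed point of $\op$ --- equivalently an anti-involutive TDO --- precisely when $2\alpha\simeq c(\sM)$, which since $2$ is invertible holds for $\alpha=c(\sM)/2$, uniquely up to a contractible space of choices.

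Finally, Corollary~\ref{Dinvuniquecor} identifies $\sD_X^{\inv}$, up to coherent homotopy, with the unique such homotopy fixed point, so $\sD_X^{\inv}\simeq\sD_X^{c(\sM)/2}\simeq\sD_X(\sqrt{\sM})$. I expect the only substantive point to be the identification $\sD_X^{\op}\simeq\sD_X(\sM)$ and, more delicately, its compatibility with the $C_2$-coherence data of Proposition~\ref{Dinvuniqueprop}, which is exactly what licenses running the ``divide by two'' argument at the level of spaces rather than merely on $\pi_0$; granting that, the rest is formal torsor bookkeeping. (When $\sM$ admits a square root $\sL$ one can argue more directly --- then $\sHHom_{\sO}(\sL,\sM)\simeq\sL$, so $\sD_X(\sL)$ is visibly anti-involutive and hence $\simeq\sD_X^{\inv}$ --- and deduce the general case by $\mu_2$-descent over the gerbe of square roots of $\sM$; but the torsor argument avoids choosing such data.)
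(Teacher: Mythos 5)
Your proposal is correct and follows essentially the same route as the paper: identify $\sD_X^{\op}\simeq\sD_X(\sM)\simeq\sD_X^{c(\sM)}$ via the right $\sD$-module structure and the last part of Proposition \ref{Dinvuniqueprop}, use the torsor structure together with $C_2$-equivariance and the uniqueness of the anti-involutive TDO (Corollary \ref{Dinvuniquecor}) to divide by two and locate $\sD_X^{\inv}$ at $c(\sM)/2$, and then apply the $\sqrt{\Ad}$ description again to identify $\sD_X^{c(\sM)/2}\simeq\sD_X(\sqrt{\sM})$. The only cosmetic difference is that you parametrise the torsor from the base point $\sD_X$ and characterise $\sD_X^{\inv}$ as the fixed point, whereas the paper parametrises from $\sD_X^{\inv}$ and solves $c(\sM)=-2\alpha$; the content is identical.
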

\begin{proof}
 Since $\sM$ has a right $\sD$-module structure, we have $\sD_X^{\op} \simeq \sD_X(\sM)$, which by the last part of Proposition \ref{Dinvuniqueprop} corresponds to $\sD_X^{c(\sM)}$. We have $\sD_X\simeq (\sD^{\inv}_X)^{\alpha}$ for some $\alpha \in \H^2(F^1\DR(X))$ by the first part of that proposition, and then $\sD_X^{\op}\simeq (\sD^{\inv}_X)^{-\alpha}$ by Corollary \ref{Dinvuniquecor}, so $c(\sM)=-2\alpha$ and $\sD_X^{\inv} \simeq \sD_X^{c(\sM)/2}$. The second equivalence then follows by applying the last part of Proposition \ref{Dinvuniqueprop} again.
 \end{proof}

\begin{definition}\label{Bmu2actDmoddef}
 Roots of unity in $A$ lie in the centre of $\sD_A$, so multiplication by elements of $\mu_2(A)$ defines a natural isomorphism of the identity endofunctor of the dg category $\Mod_{\dg}(\sD_A)$ of $\sD_A$-modules. Hypersheafifying this gives an action of $B\mu_2$ on $\Mod_{dg}(\sD_X)$, and given a class $[\sP] \in \H^2(X,\mu_2)$ we define the hypersheaf  $\Mod_{dg}(\sD_X)^{[\sP]}$ of dg categories on $X$ to be the twist $(\Mod_{\dg}(\sD_X)\by \sP)/^hB\mu_2  $ of  $\Mod_{dg}(\sD_X)$ by $\sP$.
 \end{definition}
 
\begin{definition}
 Given a line bundle $\sM$ on $X$, define the orientation class $o(\sM) \in \H^2(X,\mu_2)$ to be the image of $[\sM]$ under the connecting homomorphism $\H^1(X,\bG_m) \to \H^2(X,\mu_2)$ of the Kummer sequence.
\end{definition}

\begin{corollary}\label{moritaDinvcor}
For a derived DM $n$-stack  $X$ with perfect cotangent complex, and a line bundle $\sM$ on $X$ with right $\sD$-module structure,    the dg category $\Mod_{dg}(\sD^{\inv}_X)$ is quasi-equivalent to the twist $\Mod_{dg}(\sD_X)^{o(\sM)}$ of the dg category of $\sD$-modules by the orientation class of $\sM$. 
\end{corollary}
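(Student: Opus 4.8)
The plan is to reduce the statement to Corollary \ref{Dinvsqrtcor}, which already identifies $\sD^{\inv}_X$ with $\sD_X(\sqrt{\sM})$, the twist of $\sD_X$ by the $\bG_m$-torsor $P(\sM)$ via the square-root adjoint action $\sqrt{\Ad}$. It then suffices to produce a quasi-equivalence of hypersheaves of dg categories
\[
\Mod_{dg}(\sD_X(\sqrt{\sM})) \simeq \Mod_{dg}(\sD_X)^{o(\sM)},
\]
i.e.\ to compute the effect of the operation $\sD' \mapsto \Mod_{dg}(\sD')$ on the $\sqrt{\Ad}$-twist.

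First I would record the following trivialisation: for an honest line bundle $\sN$ on $X$, tensoring $M \mapsto \sN^{\vee}\ten_{\sO_X} M$ is a quasi-equivalence $\Mod_{dg}(\sD_X(\sN)) \simeq \Mod_{dg}(\sD_X)$, and the canonical isomorphisms $\sN_1^{\vee}\ten \sN_2^{\vee}\simeq (\sN_1\ten\sN_2)^{\vee}$ assemble these into a trivialisation of the $\oR\map(X,B\bG_m)$-action on $\Mod_{dg}(\sD_X)$ induced by the adjoint action $\Ad$ of Definition \ref{Addef}, now regarded as an action on the hypersheaf of dg categories. In other words, although $\Ad$ acts non-trivially on the sheaf of algebras $\sD_X$, it becomes canonically trivial after applying $\Mod_{dg}(-)$.

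Next I would chase this through the \'etale Kummer sequence $1 \to \mu_2 \to \bG_m \xra{[2]} \bG_m \to 1$ used to define $\sqrt{\Ad}$. Over each piece of an \'etale hypercover of $X$ on which $\sM$ admits square roots $\sN_\alpha$, we have $\sD_X(\sqrt{\sM}) \simeq \sD_X(\sN_\alpha)$, with the transition data on overlaps given by tensoring with the $\mu_2$-torsors $\sN_\alpha\ten\sN_\beta^{\vee}$; under the trivialisation of the previous step these act on $\Mod_{dg}(\sD_X)$ by scalar multiplication by $\mu_2 \subset \z(\sD_X)$, which is exactly the $B\mu_2$-action of Definition \ref{Bmu2actDmoddef}. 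Hence $\Mod_{dg}(\sD_X(\sqrt{\sM}))$ is the regluing of the constant hypersheaf $\Mod_{dg}(\sD_X)$ along that $B\mu_2$-action by the descent datum provided by these $\mu_2$-torsors, whose class is the image of $[\sM]$ under $\H^1(X,\bG_m)\to \H^2(X,\mu_2)$, i.e.\ $o(\sM)$. Combining with Corollary \ref{Dinvsqrtcor} yields $\Mod_{dg}(\sD^{\inv}_X)\simeq\Mod_{dg}(\sD_X)^{o(\sM)}$.

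The main obstacle I expect is the homotopy-coherent bookkeeping: upgrading the object-wise equivalences above to a genuine trivialisation of the $\Ad$-action (a null-homotopy of the classifying map, not merely a pointwise statement), and verifying that the regluing data arising from $\sqrt{\Ad}$ is controlled by the gerbe class $o(\sM)$ and by nothing further --- essentially a descent argument along the long exact sequence of the Kummer sequence carried out simultaneously for hypersheaves of spaces and of dg categories. Perfectness of $\oL\Omega^1_X$ enters only through Corollary \ref{Dinvsqrtcor} and Proposition \ref{Dinvuniqueprop}, which are what make $\sD^{\inv}_X$ the $\sqrt{\Ad}$-twist of $\sD_X$ to begin with.
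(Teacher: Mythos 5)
Your proposal is correct and takes essentially the same route as the paper: reduce to Corollary \ref{Dinvsqrtcor}, use the homotopy-triviality of the induced adjoint action on $\Mod_{dg}(\sD_X)$ (multiplication by $\lambda$, equivalently your tensoring by $\sN^{\vee}$), and identify the residual twist with the $B\mu_2$-action of Definition \ref{Bmu2actDmoddef}, giving the twist by $o(\sM)$. The only difference is presentational: the paper factors the $\sqrt{\Ad}$-action on module categories directly through the homotopy quotient $B\mu_2$ of $[2]\co \bG_m \to \bG_m$, which avoids your choice of local square roots and the explicit descent/regluing step.
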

\begin{proof}
 By Corollary \ref{Dinvsqrtcor}, $\sD_X^{\inv} \simeq \sD_X(\sqrt{\sM})$. The adjoint action of $\bG_m$ on $\Mod_{dg}(\sD_X)$, induced by the action $\Ad$ on $\sD_X$, is homotopically trivial because multiplication by $\lambda$ gives a natural isomorphism from the identity functor to the endofunctor $\Ad_{\lambda}^*$.  The action $\sqrt{\Ad}$ of $\bG_m$ on  $\Mod_{dg}(\sD_X)$ thus factors through the homotopy quotient $B\mu_2$ of $[2] \co \bG_m \to \bG_m$ via its action from Definition \ref{Bmu2actDmoddef}, so $\Mod_{dg}( \sD_X(\sqrt{\sM}))\simeq \Mod_{dg}( \sD_X)^{o(\sM)}$, as required. 
\end{proof}
Explicitly, the proof implies that the derived Morita equivalence is given by the $\mu_2$-twisted $\sD^{\inv}_X -\sD_X$-bimodule $\sDiff(\sO_X,\sqrt{\sM}) \simeq \sqrt{\sM}\ten_{\sO_X}\sD_X$ of differential operators from $\sO_X$ to the $\mu_2$-twisted line bundle $\sqrt{\sM}$. 

There is a dg functor $\oR\sHHom(-,\sD')_{\sD'}\co \Mod_{dg}(\sD')^{\op} \to \Mod_{dg}((\sD')^{\op})$ for any TDO $\sD'$, which  gives rise to a contravariant endofunctor of $\Mod_{dg}(\sD^{\inv}_X)$ when combined with the anti-involution of $\sD^{\inv}_X$. Under the equivalence of Corollary \ref{moritaDinvcor}, this corresponds to the endofunctor $\oR\sHHom(-,\sDiff(\sO_X,\sM))_{\sD}$ of $\Mod_{dg}(\sD_X)$, defined using the equivalence $\sD_X^{\op} \simeq \sD_X(\sM)$.

\subsection{Self-dual \tps{$(-1)$}{(-1)}-shifted deformation quantisations}\label{invBVsn}

As in Examples \ref{Dinvex}, we have a filtered involutive DGLA $\sD^{\inv}(X):= \oR\Gamma(X,\widehat{\Pol}(\sO_X,-1)^{\smile,\inv})$ satisfying the conditions for $\breve{L}$ from Setting \ref{breveLsetting}. Defining $Q^{\inv}\widehat{\Pol}(X,-1)$ to be the corresponding involutive DGLA $\tilde{L}$ of Definition \ref{tildeLdef} with its filtration $\tilde{F}$, and writing $Q^{\inv}\widehat{\Pol}(X,-1)^{sd}:=Q^{\inv}\widehat{\Pol}(X,-1)^{*=\id}$, we have
\[
\tilde{F}^pQ^{\inv}\widehat{\Pol}(X,-1)^{sd} = \{\alpha(\hbar) \in \prod_{i \ge p-1} \hbar^iF_{i+1}\sD^{\inv}(X) ~:~ \alpha(\hbar)^t=- \alpha(-\hbar) \}.
\]
The corresponding space of self-dual quantisations is then
$Q\cP(X,-1)^{sd}:= \mmc(\tilde{F}^2Q^{\inv}\widehat{\Pol}(X,-1)^{sd})$.

As in Example \ref{quantex}.(\ref{quantexvanish}),
the content of Corollary \ref{exactquantbrhcor} in this case is thus to give an equivalence between the space of exact $(-1)$-shifted symplectic structures on $X$ and the space  $Q\cP^D(X,-1)^{sd,\nondeg}$ given by
 \[
\mmc(\cocone( \eps\hbar \pd_{\hbar} \co \tilde{F}^2Q^{\inv}\widehat{\Pol}(X,-1)^{sd} \to \tilde{F}^0Q^{\inv}\widehat{\Pol}(X,-1)^{sd}\eps)).
 \]

When $X$ admits a line bundle $\sM$ with a  right $\sD$-module structure, Corollary \ref{Dinvsqrtcor} allows us to interpret this along similar lines to \S \ref{interpretDQsn} as a  twisted space of $BD_0$-algebras with $\hbar\pd_{\hbar}$-derivation satisfying a self-duality property. 

\subsubsection{\tps{$\sD\brh$}{D[[h]]}-modules associated to quantisations}

\begin{definition}
  Given a TDO $\sD'$,  an element $\Delta \in \mc(\prod_{i \ge 1} \hbar^iF_{i+1}\sD')$ and a left $\sD'$-module $M$ define ${}_{\Delta}\! M\brh$ to be the complex  $M\brh$ with differential $\delta_{\Delta}^l\co m \mapsto \delta m + \Delta \cdot m$.
  
In particular, since $\sD'$ is a $\sD'$-bimodule, ${}_{\Delta}\!\sD'\brh$ is a   right $\sD'\brh$-module.
\end{definition}
 
\begin{definition}
 Define a formal deformation $\tilde{M}$ of a right $\sD'$-module $M$ to be an inverse system $\{\tilde{M}(n)\}_n$, for $\tilde{M}(n)$ a right  $\sD'[\hbar]/\hbar^{n+1}$-module compatibly with the structure maps, such that $M(0)=M$ and the maps $M(n)\ten^{\oL}_{\sD']\hbar]/\hbar^{n+1}}\sD' \to \sD'$ are quasi-isomorphisms. 
\end{definition}
 
 \begin{definition}
 Write  ${}^t\!\sD^{\inv}$ for $\sD^{\inv}$ equipped with the right  $(\sD^{\inv})^{\ten 2}$-module structure given by $a\cdot (b\ten c):=(-1)^{\deg a \deg b}b^tac$.
 \end{definition}
 
 \begin{definition}
 Given a right $(\sD')^{\ten 2}$-module $N$, write  ${}^{-\hbar}\!N$ for $N\brh$ equipped with the right  
 $(\sD^{\inv}\brh)^{\ten 2}$-module structure given by $n(\hbar)\cdot (a(\hbar)\ten b(\hbar)):= n(\hbar) \cdot_N (a(-\hbar)\ten b(\hbar))$, where $\cdot_N$ denotes the $\hbar$-linear extension of the multiplication on $N$.
 \end{definition}

 \begin{definition}
 Given a right $\sD'$-module $M$ equipped with a symmetric right  $(\sD')^{\ten 2}$-linear morphism $\tau \co M\ten M \to N$ for a $(\sD')^{\ten 2}$-module $N$, we say that a sesquilinear formal deformation of $(M,\tau)$ is a formal deformation $\tilde{M}= \{\tilde{M}(n)\}_n$ of the right $\sD'$-module underlying  $M$, together with a compatible system of right  $(\sD'[\hbar]/\hbar^{n+1})^{\ten 2}$-linear maps $\tilde{\tau}\co \tilde{M}(n)\ten \tilde{M}(n)\to {}^{-\hbar}\!N[\hbar]/\hbar^{n+1}$.

 We say that an $\hbar^2\pd_{\hbar}$-connection $\phi \co \tilde{M} \to \tilde{M}$ (i.e. $\phi(ma) = \phi(m)a +\hbar^2 m \frac{\pd a}{\pd \hbar})$) is orthogonal with respect to $\tilde{\tau}$ if
 \[
  \hbar^2\pd_{\hbar}\tilde{\tau}(u,v)=  -\tilde{\tau}( \phi u,v) + \tilde{\tau}(u, \phi v)  \in N\brh. 
 \]
 \end{definition}
 Note that if $\tau$ is non-degenerate in the sense that the induced map $M \to \oR\HHom(M,N)_{\sD'}$ is a quasi-isomorphism, then so are the maps $ \tilde{M}(n) \to \oR\HHom(\tilde{M}(n),{}^{-\hbar}\!N[\hbar]/\hbar^{n+1} )_{\sD'[\hbar]/\hbar^{n+1}}$.

\begin{lemma}\label{quantDmodlemma} 
If $X$ is a derived DM stack (algebraic, analytic or $\C^{\infty}$)  with perfect cotangent complex generated in chain degrees $0,1$, then sending $\Delta$ to ${}_{\Delta}\sD^{\inv}_X\brh$ gives a natural map from the space $Q\cP(X,-1)^{sd}$ to the space of formal sesquilinear deformations of $\sD_X^{\inv}$ as a right $\sD_X^{\inv}$-module with non-degenerate pairing $\tau \co (\sD_X^{\inv})^{\ten 2} \to {}^t\!\sD^{\inv}_X$  given by $(a,b) \mapsto a^tb$.

The same construction gives a natural map from the space $Q\cP^D(X,-1)^{sd}$ to the space of triples $(\sE,\upsilon,D)$ for  formal sesquilinear deformations $(\sE,\upsilon)$ of $(\sD_X^{\inv},\tau)$ equipped with orthogonal $\hbar^2\pd_{\hbar}$-connections.
\end{lemma}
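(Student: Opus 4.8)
The plan is to write down the evident deformed module, pairing, and connection, and then observe that the two self-duality constraints are exactly what is needed. First I would unwind the definitions: a $0$-simplex of $Q\cP(X,-1)^{sd}$ is a self-dual Maurer--Cartan element, i.e.\ an element $\Delta \in \prod_{i\ge 1}\hbar^iF_{i+1}\sD^{\inv}(X)$ of total degree $1$ with $\delta\Delta + \half[\Delta,\Delta]=0$ for the commutator bracket and $\Delta(\hbar)^t = -\Delta(-\hbar)$, and an $n$-simplex replaces $\sD^{\inv}(X)$ by $\sD^{\inv}(X)\hten\Omega^{\bt}(\Delta^n)$. The complex ${}_{\Delta}\sD^{\inv}_X\brh$ is then a DG right $\sD^{\inv}_X\brh$-module: the computation $(\delta^l_{\Delta})^2 = \delta\Delta + \Delta^2 = \delta\Delta + \half[\Delta,\Delta]$ shows $\delta^l_{\Delta}$ squares to zero precisely by the Maurer--Cartan equation, and $\delta^l_{\Delta}$ satisfies the graded Leibniz rule over $(\sD^{\inv}_X\brh,\delta)$ because $\Delta$ acts by left multiplication. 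Since the underlying right $\sD^{\inv}_X[\hbar]/\hbar^{n+1}$-module of each truncation is free of rank one and reduces modulo $\hbar$ to $\sD^{\inv}_X$, it is flat and ${}_{\Delta}\sD^{\inv}_X\brh$ is a formal deformation of $\sD^{\inv}_X$; the cotangent-complex hypothesis enters only to invoke Examples \ref{Dinvex}, so that $\sD^{\inv}_X$ is a genuine (bounded) filtered $BD_1^{\pre}$-algebra with $\gr^V\sD^{\inv}_X \simeq \widehat\Pol(\sO_X,-1)$ and these module statements are meaningful.

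Next I would define the pairing. Take $\tilde\tau$ to be the $\hbar$-sesquilinear extension of $\tau(a,b)=a^tb$ substituting $\hbar\mapsto-\hbar$ in the first slot, i.e.\ $\tilde\tau(a(\hbar),b(\hbar)) := a(-\hbar)^t b(\hbar)$, valued in ${}^{-\hbar}({}^t\!\sD^{\inv}_X)$. The one non-formal point is that $\tilde\tau$ is a chain map: expanding $\tilde\tau(\delta^l_{\Delta}a, b) + (-1)^{|a|}\tilde\tau(a,\delta^l_{\Delta}b)$ and using $t\delta = \delta t$, the $\delta$-of-$\tilde\tau(a,b)$ terms match, while the remaining $\Delta$-contributions collapse to $\pm\, a(-\hbar)^t\bigl(\Delta(-\hbar)^t + \Delta(\hbar)\bigr)b(\hbar)$, which vanishes exactly because of self-duality $\Delta(\hbar)^t=-\Delta(-\hbar)$ --- this is where self-duality is used. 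That $\tilde\tau$ is $(\sD^{\inv}_X\brh)^{\ten 2}$-semilinear for the stated right-module structures and is symmetric then reduces to $t$ being an involutive anti-automorphism, after tracking Koszul and $\hbar\mapsto-\hbar$ signs. Non-degeneracy is immediate: because $\sD^{\inv}_X$ is free of rank one over itself, $\tau$ induces $\sD^{\inv}_X \to \oR\HHom(\sD^{\inv}_X,{}^t\!\sD^{\inv}_X)_{\sD^{\inv}_X}\simeq{}^t\!\sD^{\inv}_X$, $a\mapsto a^t$, an isomorphism since $t$ is, so the observation recorded after the definition of a sesquilinear formal deformation gives non-degeneracy of all the deformed pairings by d\'evissage in $\hbar$. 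Everything is functorial in $\Delta$ and compatible with $\Omega^{\bt}(\Delta^{\bt})$, giving the first map of simplicial sets.

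For the second statement a point of $Q\cP^D(X,-1)^{sd}$ is a pair $(\Delta,D)$ with $\Delta$ as above, $D\in\prod_{i\ge-1}\hbar^iF_{i+1}\sD^{\inv}(X)$ of total degree $0$, $D(\hbar)^t=-D(-\hbar)$, and (by Lemma \ref{PDfibrelemma} and \S\ref{interpretalgsn}) $\delta D + [\Delta,D] = -\hbar\pd_{\hbar}\Delta$. I would then set $\phi=\phi_D\co{}_{\Delta}\sD^{\inv}_X\brh\to{}_{\Delta}\sD^{\inv}_X\brh$, $\phi(m):=\hbar^2\pd_{\hbar}m - (\hbar D)\cdot m$; this is well defined since $\hbar D\in\sD^{\inv}_X\brh$, and $\phi(ma)=\phi(m)a+\hbar^2m\,\pd_{\hbar}a$, so $\phi$ is an $\hbar^2\pd_{\hbar}$-connection. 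A direct computation gives $[\delta^l_{\Delta},\phi] = -\hbar\bigl(\delta D + [\Delta,D] + \hbar\pd_{\hbar}\Delta\bigr)(-)$, which vanishes exactly by the Maurer--Cartan equation for $(\Delta,D)$, so $\phi$ is a chain endomorphism. Finally, expanding $-\tilde\tau(\phi u,v)+\tilde\tau(u,\phi v)$ and comparing with $\hbar^2\pd_{\hbar}\tilde\tau(u,v)$, the discrepancy is $\pm\,\hbar\, u(-\hbar)^t\bigl(D(-\hbar)^t + D(\hbar)\bigr)v(\hbar)$, which vanishes by the self-duality of $D$; hence $\phi$ is orthogonal for $\tilde\tau$, and $(\Delta,D)\mapsto({}_{\Delta}\sD^{\inv}_X\brh,\tilde\tau,\phi_D)$ is the asserted natural map.

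The construction is forced, so the substance is the bookkeeping: keeping straight the three right-module structures (on ${}_{\Delta}\sD^{\inv}_X\brh$, on its tensor square, and on ${}^{-\hbar}({}^t\!\sD^{\inv}_X)$) together with the anti-involution $t$ and the sesquilinear twist, and recognising that self-duality of $\Delta$ (resp.\ of $D$) is exactly the condition making $\tilde\tau$ a chain map (resp.\ making $\phi_D$ orthogonal). I expect the sign-chasing in the semilinearity and symmetry of $\tilde\tau$ to be the most tedious point; flatness and non-degeneracy are essentially immediate from $\sD^{\inv}_X$ being a unital algebra, and the hypothesis on the cotangent complex is used only through Examples \ref{Dinvex}, to know that $\sD^{\inv}_X$ is the filtered $BD_1^{\pre}$-algebra whose modules are being deformed.
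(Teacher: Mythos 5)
Your construction is correct, and it produces the same map as the paper, but by a more hands-on route. The paper's proof is soft: it includes the pro-nilpotent DGLA $(\prod_{i\ge 1}\hbar^iF_{i+1}\sD^{\inv})^{sd}$ into $(\hbar\sD^{\inv}\brh)^{sd}$ and then cites a deformation-theoretic result from \cite{coddt}, together with the mechanism of Lemma \ref{epslemmaquant}, to identify $\mmc(\hbar\sD^{\inv}\brh)^{sd}$ (resp.\ the corresponding cocone) with the space of formal sesquilinear deformations (resp.\ of such deformations with orthogonal $\hbar^2\pd_{\hbar}$-connection), the point being that self-dual elements act as endomorphisms skew-adjoint for $\tau_{\hbar}(a,b)=a(-\hbar)^tb(\hbar)$. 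Your explicit verifications --- that self-duality of $\Delta$ is exactly what makes $\tilde\tau$ a chain map for $\delta^l_{\Delta}$, that the cocone Maurer--Cartan equation makes $\phi_D$ a chain endomorphism, and that self-duality of $D$ gives orthogonality --- are precisely the computations underlying that identification, so the mathematical content coincides; what the paper's phrasing buys is that the map of \emph{spaces} comes for free from a DGLA morphism, whereas your closing claim that everything is ``compatible with $\Omega^{\bt}(\Delta^{\bt})$, giving a map of simplicial sets'' is where your version leans implicitly on the same modelling of the target deformation space by a Maurer--Cartan space. Two small corrections in that direction: the hypothesis that the cotangent complex is generated in degrees $0,1$ is not used only through Examples \ref{Dinvex} (which needs perfectness); in the paper it ensures $\sD^{\inv}_X$ is homologically bounded below, which is what licenses the appeal to \cite{coddt} identifying $\mmc(\hbar\sD^{\inv}\brh)$ with the space of formal deformations of the rank-one right module, i.e.\ it is needed to make your ``functoriality'' assertion precise rather than to construct $\sD^{\inv}_X$ itself. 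Finally, your connection $\phi(m)=\hbar^2\pd_{\hbar}m-\hbar Dm$ has the opposite sign to the paper's displayed formula $\phi(a)=\hbar^2\pd_{\hbar}a+\hbar Da$; your sign is the one consistent with the convention $\delta D+[\Delta,D]+\hbar\pd_{\hbar}\Delta=0$ of Lemma \ref{PDfibrelemma} which you adopt (the paper's formula matches the sign convention used in Proposition \ref{DCritprop}), so this is a difference of conventions rather than an error.
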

\begin{proof}
 We have a natural inclusion $(\prod_{i \ge 1} \hbar^iF_{i+1}\sD^{\inv})^{sd} \to (\hbar \sD^{\inv}\brh)^{sd}$ of pro-nilpotent DGLAs and hence $Q\cP(X,-1)^{sd} \to \mmc(\hbar \sD^{\inv}\brh)^{sd}$. The conditions ensure that the DGA $\sD^{\inv}_X$ is homologically bounded below (by the number of generators of $\bL^X$ in degree $1$), so a simple case of \cite[Corollary \ref{coddt-cofalgdefcor}]{coddt} allows us to interpret $\mmc(\hbar \sD^{\inv}\brh)$ as the space of formal deformations of the right  $\sD_X^{\inv}$-module $\sD_X^{\inv}$, since  multiplication on the left identifies   $\sD^{\inv}$  with  the complex of right $\sD^{\inv}$-module derivations (i.e. endomorphisms) of $\sD^{\inv}$.
 
Now,  $(\hbar \sD^{\inv}\brh)^{sd}= \{f(\hbar) \in \hbar \sD^{\inv}\brh~:~ f(\hbar)^t= -f(-\hbar)\}$, and 
sending $f$ to the map $a \mapsto  f a$ allows us to regard this as the complex of  linear endomorphisms $\theta$ of the right $\sD^{\inv}\brh$-module $\sD^{\inv}\brh$ which are trivial modulo $\hbar$ and orthogonal with respect to  the sesquilinear pairing $\tau_{\hbar} \co (a(\hbar),b(\hbar)) \mapsto a(-\hbar)^tb(\hbar)$
in the sense that 
\[
 \tau_{\hbar}(\theta(a),b)+ 
   \tau_{\hbar}(a,\theta(b))=0. 
\]

A similar argument thus gives an equivalence between $\mmc(\hbar \sD^{\inv}\brh)^{sd}$ and the space of formal sesquilinear deformations of $(\sD_X^{\inv},\tau)$.

For $Q\cP^D(X,-1)^{sd}$, we then proceed as in Lemma \ref{epslemmaquant}, identifying 
\[
\mmc(\cocone(\hbar\pd_{\hbar} \eps \co \hbar \sD^{\inv}\brh \to \hbar^{-1} \sD^{\inv}\brh\eps))^{sd}
\]
with the space of triples $(\sE,\upsilon,\phi)$ for $(\sE,\upsilon)$ a formal sesquilinear deformation  of $(\sD_X^{\inv},\tau)$, and $\phi \co \sE \to \sE$  an orthogonal $\hbar^2\pd_{\hbar}$-connection. Explicitly, a Maurer--Cartan element $\Delta + D\eps$ is sent to the module   ${}_{\Delta}\!\sD'\brh$ with connection $\phi(a):= \hbar^2\frac{\pd a}{\pd \hbar}+\hbar Da$.
%
 \end{proof}

As an immediate consequence of Lemma \ref{quantDmodlemma} and Corollary \ref{moritaDinvcor}, we have the following, writing $f \mapsto f^t$ for the isomorphism $\sD_X^{\op} \to \sD_X(\sM)$ and regarding $\sDiff(\sO_X, \sM) $ as a $(\sD_X(\sM),\sD_X)$-bimodule.

\begin{corollary}\label{quantDmodcor}
Given $X$ as in  Lemma \ref{quantDmodlemma} and a line bundle $\sM$ on $X$ with right $\sD_X$-module structure, 
sending $\Delta$ to ${}_{\Delta}\sDiff(\sO_X, \sqrt{\sM})\brh$ gives a natural map from the space $Q\cP(X,-1)^{sd}$ to the space of formal sesquilinear deformations of $\sDiff(\sO_X, \sqrt{\sM}) $ as a $\mu_2$-twisted right $\sD_X$-module with non-degenerate pairing $\tau \co (\sDiff(\sO_X, \sqrt{\sM}))^{\ten 2} \to \sDiff(\sO_X, \sM)$  given by $(a,b) \mapsto a^tb$.

The same construction gives a natural map from the space $Q\cP^D(X,-1)^{sd}$ to the space of triples $(\sE,\upsilon,\phi)$ for  formal sesquilinear deformations $(\sE,\upsilon)$ of $(\sDiff(\sO_X, \sqrt{\sM}),\tau)$ equipped with orthogonal $\hbar^2\pd_{\hbar}$-connections $\phi\co \sE \to \sE$.
\end{corollary}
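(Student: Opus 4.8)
The plan is to derive Corollary \ref{quantDmodcor} from Lemma \ref{quantDmodlemma} by transporting the maps constructed there along the derived Morita equivalence of Corollary \ref{moritaDinvcor}. First I would recall that the hypothesis on $\sM$ gives $\sD^{\inv}_X \simeq \sD_X(\sqrt{\sM})$ by Corollary \ref{Dinvsqrtcor}, and that the quasi-equivalence $\Mod_{dg}(\sD^{\inv}_X) \simeq \Mod_{dg}(\sD_X)^{o(\sM)}$ of Corollary \ref{moritaDinvcor} is implemented by the $\mu_2$-twisted $\sD^{\inv}_X$-$\sD_X$-bimodule $\sDiff(\sO_X,\sqrt{\sM}) \simeq \sqrt{\sM}\ten_{\sO_X}\sD_X$; in particular it carries the free rank-one right $\sD^{\inv}_X$-module $\sD^{\inv}_X$ to the $\mu_2$-twisted right $\sD_X$-module $\sDiff(\sO_X,\sqrt{\sM})$.

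Next I would observe that the notion of formal deformation of a right module appearing in Lemma \ref{quantDmodlemma} depends only on the dg category of modules: deformations of a right module $M$ are governed by the pro-nilpotent DGLA of its $\hbar$-linear, $\hbar$-divisible endomorphisms, via the argument of \cite[Corollary \ref{coddt-cofalgdefcor}]{coddt} already invoked in that proof, and this datum is invariant under derived equivalences. Hence the equivalence above identifies the space of formal deformations of $\sD^{\inv}_X$ as a right $\sD^{\inv}_X$-module with the space of formal deformations of the $\mu_2$-twisted right $\sD_X$-module $\sDiff(\sO_X,\sqrt{\sM})$, compatibly with the description via Maurer--Cartan elements $\Delta$ used to parametrise those spaces, so that $\Delta$ is sent to ${}_{\Delta}\sDiff(\sO_X,\sqrt{\sM})\brh$ as claimed.

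The one step requiring care — and the place I expect the only real work to lie — is matching the pairings through the equivalence, and keeping track of the $\mu_2$-twist while doing so. The bimodule ${}^t\!\sD^{\inv}_X$ encoding $\tau$ in Lemma \ref{quantDmodlemma} is $\sD^{\inv}_X$ with its left action twisted by the anti-involution $t$, so the contravariant functor $M \mapsto \oR\sHHom(M,\sD^{\inv}_X)_{\sD^{\inv}_X}$ composed with $t$ is exactly the duality endofunctor of $\Mod_{dg}(\sD^{\inv}_X)$ discussed immediately before Corollary \ref{moritaDinvcor}. Under the Morita equivalence together with the identification $\sD_X^{\op}\simeq \sD_X(\sM)$, $f\mapsto f^t$, coming from the right $\sD$-module structure on $\sM$, this corresponds to $M \mapsto \oR\sHHom(M,\sDiff(\sO_X,\sM))_{\sD_X}$, so ${}^t\!\sD^{\inv}_X$ is carried to $\sDiff(\sO_X,\sM)$ and the form $(a,b)\mapsto a^tb$ on $\sD^{\inv}_X$ to the stated form $(a,b)\mapsto a^tb$ on $\sDiff(\sO_X,\sqrt{\sM})$ valued in $\sDiff(\sO_X,\sM)$, with non-degeneracy preserved since it is an equivalence-invariant property. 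Finally I would note that the sesquilinear refinement built into ${}^{-\hbar}\!N$ and the orthogonality condition defining an $\hbar^2\pd_{\hbar}$-connection are imposed $\hbar$-linearly on precisely the data just transported, hence carry over verbatim; composing the two maps of Lemma \ref{quantDmodlemma} with these equivalences of spaces then yields the asserted natural maps out of $Q\cP(X,-1)^{sd}$ and $Q\cP^D(X,-1)^{sd}$.
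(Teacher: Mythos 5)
Your proposal is correct and follows essentially the same route as the paper, which deduces the corollary directly from Lemma \ref{quantDmodlemma} together with the Morita equivalence of Corollary \ref{moritaDinvcor} (via $\sD^{\inv}_X\simeq\sD_X(\sqrt{\sM})$ and the identification of the duality endofunctor with $\oR\sHHom(-,\sDiff(\sO_X,\sM))_{\sD}$ noted just after that corollary). Your extra remarks on Morita-invariance of the deformation problem and on transporting the sesquilinear and orthogonality data simply make explicit what the paper treats as immediate.
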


Writing $\sK_X:= \det \oL\Omega^1_X$, we then have:
\begin{corollary}\label{quantDmodcor2}
On a derived DM $n$-stack $X$  (algebraic, analytic or $\C^{\infty}$), to each exact $(-1)$-shifted symplectic structure there is a canonically associated 
formal sesquilinear deformation  with  orthogonal $\hbar^2\pd_{\hbar}$-connection of  the  $\mu_2$-twisted right $\sD_X$-module $\sDiff(\sO_X, \sqrt{\sK}_X)$ with non-degenerate pairing $\tau(a,b) = a^tb$.
\end{corollary}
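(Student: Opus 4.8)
The plan is to obtain the asserted assignment as a composite of maps already established above. The first ingredient is the equivalence recorded at the start of \S\ref{invBVsn}: combining Corollary \ref{exactquantcor} with Proposition \ref{exactcompprop} (equivalently, Corollary \ref{exactquantbrhcor}) identifies the space of exact $(-1)$-shifted symplectic structures on $X$ with $Q\cP^D(X,-1)^{sd,\nondeg}$, and hence, via the inclusion $Q\cP^D(X,-1)^{sd,\nondeg}\subseteq Q\cP^D(X,-1)^{sd}$, yields a canonical map from the former space to $Q\cP^D(X,-1)^{sd}$. By the Remark following Corollary \ref{exactquantcor} this map depends on no choice of associator or quantised compatibility map. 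The remaining ingredient is Corollary \ref{quantDmodcor} applied with the line bundle $\sM=\sK_X:=\det\oL\Omega^1_X$, so the task reduces to verifying the hypotheses of Lemma \ref{quantDmodlemma} and Corollary \ref{quantDmodcor} in the present generality.

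The cotangent-complex hypothesis is automatic. Existence of a $(-1)$-shifted symplectic structure forces $\oL\Omega^1_X$ to be perfect (a hypothesis implicit in having a non-degenerate shifted symplectic form, and in force throughout \S\ref{vanishsn}), and non-degeneracy of $\omega_2$ then supplies an equivalence $\oL\Omega^1_X\simeq(\oL\Omega^1_X)^\vee[1]$, where $(-)^\vee$ is the $\sO_X$-linear dual. Since the cotangent complex of a derived DM stack is connective, comparing Tor-amplitudes on the two sides of this equivalence pins $\oL\Omega^1_X$ to cohomological amplitude $[-1,0]$, i.e. it is generated in chain degrees $0$ and $1$; in particular the hypotheses of Lemma \ref{quantDmodlemma} hold.

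The line-bundle hypothesis is met by taking $\sM=\sK_X=\det\oL\Omega^1_X$ with its canonical right $\sD_X$-module structure — the derived analogue of the right $\sD$-module structure on the canonical bundle, available in the algebraic, analytic and $\C^{\infty}$ settings with the relevant sheaves of differential operators, and precisely the datum used in \cite{DQvanish,DQDG}. With $\sM=\sK_X$, Corollary \ref{quantDmodcor} produces a natural map from $Q\cP^D(X,-1)^{sd}$ to the space of triples $(\sE,\upsilon,\phi)$ in which $(\sE,\upsilon)$ is a formal sesquilinear deformation of $(\sDiff(\sO_X,\sqrt{\sK}_X),\tau)$, with $\tau(a,b)=a^tb$ non-degenerate, and $\phi$ an orthogonal $\hbar^2\pd_\hbar$-connection. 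Composing with the map of the first paragraph gives the desired canonical assignment, naturality being inherited from each constituent map.

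The one point requiring genuine care, I expect, is the uniform treatment of the canonical right $\sD_X$-module structure on $\det\oL\Omega^1_X$ across the algebraic, analytic and $\C^{\infty}$ settings (with the amplitude bookkeeping of the second paragraph a minor secondary point); the rest is a formal concatenation of results proved above.
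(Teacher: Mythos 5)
Your proposal is correct and follows essentially the same route as the paper's proof: compose the equivalence $\Sp^{\ex}(X,-1)\simeq \cP^D(X,-1)^{\nondeg}\simeq Q\cP^D(X,-1)^{sd,\nondeg}$ from Proposition \ref{exactcompprop} and Corollary \ref{exactquantcor} with Corollary \ref{quantDmodcor} applied to $\sM=\sK_X$ with its canonical right $\sD$-module structure, the DM hypothesis plus $(-1)$-shifted non-degeneracy guaranteeing the amplitude condition of Lemma \ref{quantDmodlemma}. The only difference is that you spell out the Tor-amplitude argument which the paper leaves implicit.
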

\begin{proof}
Proposition \ref{exactcompprop} and Corollary \ref{exactquantcor} give us equivalences $\Sp^{\ex}(X,-1) \simeq \cP^D(X,-1)^{\nondeg} \simeq Q\cP^D(X,-1)^{sd}$. Since $X$ is DM and carries a $(-1)$-shifted symplectic structure, the conditions of Lemma \ref{quantDmodlemma} are satisfied and $\sK_X$ is naturally a right $\sD$-module. Thus Corollary \ref{quantDmodcor} associates a sesquilinear deformation with orthogonal connection to each exact  $(-1)$-shifted symplectic structure. 
\end{proof}
As in Examples \ref{poissonex}, note that \cite[Proposition 3.2]{KinjoParkSafronov} or
\cite[Proposition 2.2.11]{HHR1} give a canonical section of the map $\Sp^{\ex}(X,-1) \to \Sp(X,-1)$ whenever the base is a field, so the corollary canonically associates a  formal sesquilinear deformation  with  orthogonal connection to every $(-1)$-shifted symplectic structure.
 
\begin{remark}\label{pushforwardpairingrmk}
Under the conditions of Lemma \ref{quantDmodlemma}, if $i \co X \to U$ is a derived closed immersion of codimension $d$, with $U$ smooth, then $\sK_X \simeq i^!\sK_U[d] := \oR\sHHom_{i^{-1}\sO_U}(\sO_X,i^{-1}\sK_U)[d]$. Pushforward of right $\sD$-modules is given by $i_!\sE:= i_*(\sE\ten^{\oL}_{\sD_X}\sDiff_{i^{-1}\sO_U}(i^{-1}\sO_U,\sO_X))$, and then any sesquilinear pairing $ (\sE)^{\ten 2} \to \sDiff_{\sO_X}(\sO_X, \sK_X)\brh$ gives rise to a sesquilinear pairing
\[
(i_!\sE)^{\ten 2} \to  \sDiff_{\sO_U}(\sO_U,\sO_X) \ten^{\oL}_{\sD_X^{\op}}\sDiff_{\sO_X}(\sO_X, \sK_X) \ten^{\oL}_{\sD_X}\sDiff_{\sO_U}(\sO_U,\sO_X)\brh
\]
where we omit $i_*,i^{-1}$ to lighten the notation. 

The left action of $\sD_X^{\op}$ on $\sDiff_{\sO_X}(\sO_X, \sK_X)$ comes from the identification $\sD_X^{\op} \simeq \sD_X(\sK_X)$, and the contravariant functor $\oR\sHHom_{\sO_U}(-,\sK_U)$ similarly gives $\sDiff_{\sO_U}(\sO_U,\sO_X) \simeq \sDiff_{\sO_U}(\sK_X[-d],\sK_U)$, intertwining the respective $(\sD_X,\sD_U)$- and $(\sD_U(\sK_U),\sD_X(\sK_X))$-bimodule structures. We thus have a pairing
\[
 (i_!\sE)^{\ten 2} \to \sDiff_{\sO_U}(\sK_X[-d],\sK_U)\ten^{\oL}_{\sD_X(\sK_X)}\sDiff_{\sO_X}(\sO_X, \sK_X) \ten^{\oL}_{\sD_X}\sDiff_{\sO_U}(\sO_U,\sO_X)\brh,
\]
which maps naturally under composition to give a $\sDiff_U(\sO_U,\sK_U)\brh[d]$-valued sesquilinear pairing $\pi$ on $i_!\sE$. Equivalently, we can write this as an $\hbar$-semilinear morphism
\[
 i_!\sE \to \oR\sHHom(i_!\sE, \sDiff_U(\sO_U,\sK_U)\brh)_{\sD_U\brh}[d]
\]
of right $\sD_U\brh$-modules. Note that if the pairing on $\sE$ is non-degenerate, then this duality morphism is a quasi-isomorphism.

The resolution 
\[
\sF\ten_{\sO_U}\sD_U \la \sF\ten_{\sO_U}\sT_U\ten_{\sO_U}\sD_U \la \sF\ten_{\sO_U}(\L^2\sT_U)\ten_{\sO_U}\sD_U \la \ldots
\]
of any right $\sD_U$-module $\sF$ gives a morphism
\[
\oR\sHHom_{\sO_U}(\sF, \sN\ten_{\sO_U}\sK_U)[-d] \to 
 \oR\sHHom(\sF, \sDiff_U(\sO_U,\sN))_{\sD_U} 
\]
which is a quasi-isomorphism when $\sF$ is perfect as an $\sO_U$-module.

%

If $\sE$ is equipped with an orthogonal $\hbar^2\pd_{\hbar}$-connection $\phi\co \sE \to \sE$, then the induced $\hbar^2\pd_{\hbar}$-connection on $i_!\sE$ is automatically orthogonal with respect to $\pi$. 



\end{remark}

\subsubsection{Derived critical loci: twisted de Rham complexes appear}

 Given a smooth  scheme, or analytic or $\C^{\infty}$ space $Y$  of dimension $m$, and a function $f \co Y \to \bA^1$, we can consider the derived critical locus $\oR\Crit(Y,f)$ of $f$, which is equipped with a canonical exact $(-1)$-shifted symplectic structure $(\omega, \lambda)$. 
 
\begin{definition}
 The structure sheaf of $\oR\Crit(Y,f)$ is the CDGA given by the alternating algebra $\sO_Y[\sT_Y[1]]$, with differential $\delta$ given by contraction with $df$. The term $\lambda \in \z^0(\DR(\oR\Crit(Y,f))/F^2)$ is given by the sum of $-f \in \sO_Y$ and  the generator $\nu$ of the canonical copy of $\Omega^1_Y\ten_{\sO_Y}\sT_Y$ inside $\Omega^1_{\oR\Crit(Y,f)}$. The $(-1)$-shifted symplectic structure $\omega$ is then given by $d\nu$, for the de Rham differential $d$.
 
%

Explicitly, in local co-ordinates $(y_i)$ for $\sO_Y$, with  $(\eta_i)$ the basis for $\sT_Y$ dual to $(dy_i)$, we have $\nu= \sum_i \eta_idy_i$ and so  $\omega= \sum_i dy_i d\eta_i$
and $\lambda = \sum_i \eta_idy_i  -f$, noting that $d\lambda= \sum_i d\eta_idy_i  -df$ and $\delta \lambda = \sum_i \frac{\pd f}{\pd y_i} dy_i =df$.
\end{definition}

For $X:=\oR\Crit(Y,f)$ and $i \co X \to Y$ the natural map, the determinant bundle $\sK_X$ is given by $(i^*\sK_Y)^{\ten 2}$, so has a natural square root $\sL:= i^*\sK_Y$.  Proposition \ref{exactcompprop} and Corollary \ref{exactquantcor} then give us a contractible space $ Q\cP^D(X,-1)^{sd} \simeq Q\cP^D_X(\sL,-1)^{sd}$ of self-dual quantisations with $\hbar\pd_{\hbar}$-derivation.

\begin{proposition}\label{DCritprop}
 On the exact $(-1)$-shifted symplectic scheme $\X:=\oR\Crit(Y,f)$, the essentially unique self-dual quantisation $(\Delta, D)$ with $\hbar\pd_{\hbar}$-derivation 
 is given on the line bundle $\sL:= i^*\sK_Y \cong (\Omega^*_Y,?\wedge df)[\dim Y]$ by setting $\Delta :=\hbar d \in \hbar F_2\sD_X(\sL)$, for the de Rham differential $d$,
 and defining the  $\hbar\pd_{\hbar}$-connection $D \in F_1\sD_X(\sL) \by \hbar^{-1}F_0\sD_X(\sL)$ to act  on $\Omega^p_Y$ as multiplication by  $(\dim Y/2 -p) -\hbar^{-1}f$. 
 \end{proposition}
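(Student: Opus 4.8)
The plan is to exhibit the pair $(\Delta,D)$ explicitly, verify that it defines a point of the space $Q\cP^{D}(X,-1)^{sd}$ of self-dual non-degenerate quantisations with $\hbar\pd_{\hbar}$-derivation from \S\ref{invBVsn}, and then invoke the equivalences $Q\cP^{D}(X,-1)^{sd}\simeq\cP^{D}(X,-1)^{\nondeg}\simeq\Sp^{\ex}(X,-1)$ of Corollary \ref{exactquantcor} and Proposition \ref{exactcompprop} to conclude it is the distinguished quantisation, by checking that its reduction modulo $\hbar$ is the non-degenerate shifted Poisson structure with formal derivation attached to the canonical exact structure $(\omega,\lambda)$. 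Throughout write $m:=\dim Y$ and identify $\sL=i^{*}\sK_{Y}$ with the complex $(\Omega^{\bullet}_{Y},\,df\wedge-)[m]$, so that $\Omega^{p}_{Y}$ lies in chain degree $p-m$. Since $\sL^{\otimes 2}\cong\sK_{X}$ carries a canonical right $\sD$-module structure, Corollary \ref{Dinvsqrtcor} gives $\sD^{\inv}_{X}\simeq\sD_{X}(\sL)$, so $Q^{\inv}\widehat{\Pol}(X,-1)$ and its self-dual part are computed from the order filtration on $\sD_{X}(\sL)$. In local coordinates $y_{i}$ on $Y$ with odd generators $\eta_{i}$ for $\sT_{Y}[1]$, the de Rham differential $d$ of $\Omega^{\bullet}_{Y}$ is a differential operator of order exactly $2$ over $\sO_{X}=\sO_{Y}[\sT_{Y}[1]]$ (it is order $1$ over $\sO_{Y}$, and the brackets $[d,\iota_{\xi}]=\cL_{\xi}$ together with $[d,g\cdot]=dg\wedge-$ force the second order), with principal symbol the bivector $\sum_{i}\pd_{\eta_{i}}\pd_{y_{i}}$; likewise the operator multiplying $\Omega^{p}_{Y}$ by $(m/2-p)-\hbar^{-1}f$ equals $\sum_{i}\eta_{i}\pd_{\eta_{i}}-m/2-\hbar^{-1}f$, which lies in $F_{1}\sD_{X}(\sL)\times\hbar^{-1}F_{0}\sD_{X}(\sL)$. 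Both expressions are manifestly global, depending only on the de Rham differential, the form-degree and $f$, so no gluing issues arise.

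Next I would verify the two structure equations. The differential on $\sD_{X}(\sL)$ is $[\delta_{\sL},-]$ with $\delta_{\sL}$ the operator $df\wedge-$; the identity $d(df)=0$ and the graded Leibniz rule give $[\delta_{\sL},d]=0$, while $d^{2}=0$ gives $\tfrac{1}{2}[\Delta,\Delta]=\hbar^{2}d^{2}=0$, so $\Delta=\hbar d$ is a Maurer--Cartan element of $\tilde{F}^{2}Q^{\inv}\widehat{\Pol}(X,-1)^{sd}$. For the derivation equation one evaluates both sides on $\Omega^{p}_{Y}$: consecutive eigenvalues of $D$ differ by one, so commuting $\hbar d$ past $D$ reproduces $\hbar d$ up to the term $-df\wedge-$ coming from $d(-\hbar^{-1}f)=-\hbar^{-1}df$, while $[\delta_{\sL},D]$ supplies exactly the compensating multiplication-by-$df$ term; collecting these gives $\delta D+[\Delta,D]=-\hbar\pd_{\hbar}(\Delta)$, the defining equation of a point of $\mmc(\cocone(\eps\hbar\pd_{\hbar}\colon\tilde{F}^{2}Q^{\inv}\widehat{\Pol}(X,-1)^{sd}\to\tilde{F}^{0}Q^{\inv}\widehat{\Pol}(X,-1)^{sd}\eps))$. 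The sign conventions for the cocone differential and for the identification $\sK_{Y}\otimes\Lambda^{\bullet}\sT_{Y}\cong\Omega^{\bullet}_{Y}$ are to be fixed at the outset so that these equations come out in the stated form. Self-duality is checked against the anti-involution $t$ of $\sD_{X}(\sL)=\sD_{X}(\sqrt{\sK_{X}})$: the de Rham differential is formally self-adjoint with respect to the canonical $\sK_{X}$-valued pairing on $\sL$ — equivalently, it is the operator realising the right $\sD$-module structure up to that identification — so $d^{t}=d$ and $\Delta(\hbar)^{t}=\hbar d=-\Delta(-\hbar)$; and the eigenvalue operator satisfies $(\sum_{i}\eta_{i}\pd_{\eta_{i}}-m/2)^{t}=-(\sum_{i}\eta_{i}\pd_{\eta_{i}}-m/2)$ while $f^{t}=f$, whence $D(\hbar)^{t}=-D(-\hbar)$. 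The symmetric centring of the eigenvalues of $D$ about $m/2$ is precisely what this last identity requires.

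Finally I would identify $(\Delta,D)$ with the distinguished quantisation of $(\omega,\lambda)$ by passing to the classical limit. Under $\tilde{L}/\hbar\tilde{L}\simeq\widehat{\Pol}(X,-1)$ the image of $\Delta=\hbar d$ is its principal symbol $\sum_{i}\pd_{\eta_{i}}\pd_{y_{i}}$, the non-degenerate bivector dual to $\omega=\sum_{i}dy_{i}\,d\eta_{i}$; the image of $D$ is the strict derivation $D^{cl}=D_{0}+D_{1}$ with $D_{0}=-f\in\cW_{-1}\widehat{\Pol}(X,-1)=\sO_{X}$ and $D_{1}=\sum_{i}\eta_{i}\pd_{\eta_{i}}\in\cW_{0}\widehat{\Pol}(X,-1)=\sT_{X}$, the constant $-m/2$ vanishing in $\gr^{F}_{1}$ and thus being a genuinely quantum correction. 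One then checks the three equations of \S\ref{strictsn} for $(\pi,D_{0},D_{1})$: $\delta D_{0}=0$ since $\delta$ kills functions of $y$; $\delta D_{1}+[\pi,D_{0}]=0$ since $\delta D_{1}$ is contraction with $df$ and $[\pi,D_{0}]=\{-f,-\}_{\pi}$; and $[\pi,D_{1}]=-\pi$ since $D_{1}$ generates the scaling of the odd directions under which $\pi$ has weight $-1$. As $D_{1}$ is the contraction of the one-form part $\nu$ of $\lambda=\nu-f$ against $\pi$, and $D_{0}=-f$ is the function part of $\lambda$, the strict datum $(\pi,D^{cl})$ is exactly the one that Proposition \ref{exactcompprop}, in the strict form of \S\ref{strictsn}, associates to $(\omega,\lambda)$; hence under the equivalences above, $(\Delta,D)$ is the quantisation attached to the canonical exact structure, which proves the proposition.

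I expect the main obstacle to be this last step: confirming that the classical limit $(\pi,D^{cl})$ is genuinely the image of $(\omega,\lambda)$ under the homotopical Legendre transformation of \S\ref{legendresn}, rather than a twist of it by a de Rham cohomology class. This is where one must show that the constant $m/2$ is invisible classically and that all the signs — in $\delta_{\sL}=df\wedge-$, in the Schouten bracket, in the cocone differential, and in the contraction $\iota_{\nu}\pi$ — are mutually consistent; working throughout with the strict presentation of \S\ref{strictsn}, where the Legendre correspondence is explicit, is the most economical way to keep this bookkeeping under control.
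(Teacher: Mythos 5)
Your proposal is correct, and its core is the same as the paper's proof: exhibit $(\Delta,D)$, check the Maurer--Cartan and derivation identities via exactly the commutators you describe ($[\delta,d]=0$, $[\delta,D]=\delta$, $[d,D]=d-\hbar^{-1}\delta$, up to the cocone sign convention), and verify self-duality from $\delta^t=-\delta$, $\deg_\Omega^t=\dim Y-\deg_\Omega$ and $d^t=d$. The paper's proof stops at this membership check, the essential uniqueness having been recorded just before the proposition via Proposition \ref{exactcompprop} and Corollary \ref{exactquantcor}; your additional final step --- reducing mod $\hbar$ to the strict datum $(\pi, -f + \sum_i\eta_i\pd_{\eta_i})$ and matching it, through the strict form of \S\ref{strictsn}, with the Legendre transform of the canonical exact structure $(\omega,\lambda)$ --- does not appear in the paper, but it is sound and usefully makes explicit that $(\Delta,D)$ is the quantisation of the \emph{standard} exact structure rather than of some other point of $\Sp^{\ex}(X,-1)$. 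The one point where you are breezier than the paper is the sign in $d^t=d$: formal adjunction against the $\sK_Y$-valued cup-product pairing only yields $d^t=\pm d$, and the paper resolves the ambiguity by observing that $d$ has order $2$ as a differential operator of $\sO_X$-modules while $t$ acts as $(-1)^j$ on $\gr^F_j\sD_X(\sL)$ (and similarly $\delta^t=-\delta$ from $\delta$ having order $1$); since the self-duality of $\Delta$ hinges precisely on this sign, that argument, or an equivalent one, needs to be spelled out when you fix your conventions.
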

\begin{proof}
 It suffices to show that we indeed have $(\Delta, D) \in Q\cP^D_X(\sL,-1)^{sd}$. For $\delta \alpha =\alpha \wedge df$ on $\Omega^*_Y$, we have the following relations:
 \[
[\delta, \delta]=0, \quad [\delta, d]=0, \quad [d,d]=0, \quad [\delta,D]= \delta, \quad [d,D]=  d- \hbar^{-1}\delta, 
 \]
 so $[\delta +\hbar d, \delta +\hbar d]=0$ and 
 $
 [\delta + \hbar d,D] 
 = \hbar d = \hbar\pd_{\hbar}(\hbar d). 
 $
Thus  $(\hbar d ,D\eps)$ is indeed a Maurer--Cartan element of 
 \[
 \cocone(\prod_{i \ge 1} \hbar^i F_{i+1}\sD_X(\sL) \xra{\hbar \pd_{\hbar}} \prod_{i \ge -1} \hbar^i F_{i+1}\sD_X(\sL)\eps),
 \]
 so it only remains to show self-duality.

Via the identification $\sHom_{i^{-1}\sO_Y}(\sO_X,i^{-1}\sK_Y)[m]\cong \sK_X $ (where $m=\dim Y$),  the anti-involution $t$ on $\sD_X(\sL)$ is induced by the pairing
\[
 \sL \cong \sHom_{\sO_Y}(\sL,\sK_Y)[m]
\]
given by combining the isomorphisms $\Omega^p_Y \cong \sHom_{\sO_Y}(\Omega^{m-p}_Y,\sK_Y)$ coming from the cup product. Up to sign, $t$ thus combines the corresponding isomorphisms $\sDiff_Y(\Omega^p_Y, \Omega^q_Y) \cong \sDiff_Y(\Omega^{m-q}_Y, \Omega^{m-p}_Y)$.

Now $\delta$ is $\sO_Y$-linear, so $\delta^t$ is its $\sO_Y$-linear dual, up to sign.  Since  $(\alpha \wedge df) \wedge \beta = \pm \alpha \wedge (\beta \wedge df)$, we thus have $\pm \delta^t = \delta$. We know that $t$ acts as multiplication by $(-1)^j$ on $\gr^F_j \sD_X(\sL)$, so by considering principal symbols the signs must resolve as $\delta^t = -\delta$, since $\delta$ has order $1$ as a differential operator of $\sO_X$-modules.

The degree operator $\deg_{\Omega}$ multiplying $\Omega^p_Y$ by $p$ is also $\sO_Y$-linear, and the dual of the operator $p=\deg_{\Omega}$ on $\Omega^p_Y$ is the operator $p = (m-\deg_{\Omega})$ on $\Omega^{m-p}_Y$, so we similarly have $\deg_{\Omega}^t= m-\deg_{\Omega}$.

The de Rham differential $d \co \Omega^p_Y \to \Omega^{p+1}_Y$ has order $1$ as a differential operator of $\sO_Y$-modules. 
Its dual $d^t \co \Omega^{m-p-1}_Y \to \Omega^{m-p}_Y$ is therefore  determined by the property that for $\alpha \in \Omega^p_Y$ and $\beta \in  \Omega^{m-p-1}_Y$, we have
\[
  d^t(\beta) \wedge \alpha = \pm \beta \wedge d\alpha   \pm  (\beta\wedge  \sigma_1(d)(\alpha)_{(1)})^{ \sigma_1(d)(\alpha)_{(2)}} \in \sK_Y
\]
in Sweedler notation, 
where $\sigma_1(d) \co \Omega^p_Y \to \Omega^{p+1}_Y\ten_{\sO_Y}\sT_Y$ is the principal symbol of $d$, and $(-)^{\xi}$ denotes the right action of $\xi \in \sT_X$ on $\sK_Y$. In this case, $\sigma_1(d)$ is  dual to the cup product $\Omega^p_Y \ten_{\sO_Y} \Omega^1_Y \to \Omega^{p+1}_Y$, and $\gamma^{\xi}= -d(\gamma \lrcorner \xi)$. 
Thus 
\[
  d^t(\beta) \wedge \alpha = \pm \beta \wedge d\alpha    \pm d(\beta \wedge \alpha), 
\]
so the signs must align to give  $d^t =\pm d$. Again, the choice of sign is then forced by $t$ acting as multiplication by $(-1)^j$ on $\gr^F_j \sD_X(\sL)$; since $d$ has order $2$ as a differential operator of $\sO_X$-modules, we must have $d^t=d$.



For the sesquilinear anti-involution $*$ on $\sD(\sL)\brh$ given by $a(\hbar)^*:= -a^t(-\hbar)$, we thus have 
\begin{align*}
 D^* &= 
 -\frac{m}{2} + \deg_{\Omega}^t + (-\hbar)^{-1}f = -\frac{m}{2} + (m- \deg_{\Omega}) -\hbar^{-1}f
 = D,\\
\text{and }\quad  \Delta^*& = -(-\hbar)d^t= \hbar d = \Delta.   \qedhere
 \end{align*}
\end{proof}

\begin{remark}
We can simplify the operations in Proposition \ref{DCritprop} by introducing powers of $\hbar$. If we let
$\sL_{\hbar}':=(\hbar^{m/2-*}\Omega^*_Y\brh,?\wedge \hbar^{-1}df)[m]$, then multiplication by $\hbar^{i-m/2}$ in degree $i$ gives an isomorphism $\sL\brh \to  \sL_{\hbar}'$, with $\Delta$ acting on $\sL_{\hbar}'$ as the de Rham differential, and $D$ acting as the 
$\hbar\pd_{\hbar}$-connection $\hbar\pd_{\hbar} -\hbar^{-1}f$. The $\sO_X$-module structure on $\sL'_{\hbar}$ combines the obvious $\sO_Y$-module structure with  $v \in \sT_Y[1] \subset \sO_X$ acting as contraction with $\hbar v$ (whereas on $\sL\brh$ it acts as contraction with $v$).

\end{remark}

\begin{definition}
 Given a right $\sD_Y$-module  $\sN$ and $f \in \sO_Y$, by analogy with \cite{SabbahSaito} we define the right $\sD_Y(\!(\hbar)\!)$-module $\sN(\!(\hbar)\!)^f$ with underlying $\sO_Y(\!(\hbar)\!)$-module $\sN(\!(\hbar)\!)$ by setting
 \[
  (\sum_i a_i \hbar^i)\xi := (\sum_i (a_i\xi) \hbar^i) - (\sum_i a_{i+1}\xi(f) \hbar^i)
 \]
for $\xi \in \sT_Y \subset \sD_Y$, so $\xi(f) \in \sO_Y$. In other words, we take the connection on $\sN$ and subtract the contraction with $-\hbar^{-1}df$.

The canonical $\hbar\pd_{\hbar}$-connection on $\sN(\!(\hbar)\!)^f$ is given by $D(a):= \hbar\pd_{\hbar}(a) + \hbar^{-1}af$.


Given a left $\sD_Y$-module  $\sN$, we define $\sN(\!(\hbar)\!)^f$ similarly.
\end{definition}

On $X:=\oR\Crit(Y,f)$, the natural square root $i^*\sK_Y$ of the line bundle $\sK_X$ allows us to identify the $\mu_2$-twisted line bundle $\sqrt{\sK}_X$ of square roots of $\sK_X$ with the stack of line bundles of the form $(i^*\sK_Y)\ten_{\Z}\vareps$, for square roots $\vareps$ of the constant \'etale sheaf $\Z$ 
(i.e. $ \vareps\ten_{\Z}\vareps \cong \Z$). 

Substituting  into Corollary \ref{quantDmodcor2} for $(X,\omega, \lambda)$ 
gives a   canonical $\mu_2$-twisted right  $\sD_X\brh$-module $\sE$ on 
$X$
with non-degenerate sesquilinear pairing $\upsilon$ and orthogonal $\hbar^2\pd_{\hbar}$-connection $\phi$, deforming $\sDiff(\sO_X, \sqrt{\sK}_X)$. The twist is the orientation class of $\sK_X$, so we can regard $(\sE,\upsilon,\phi)$ as a functor from the stack of line bundles $(i^*\sK_Y)\ten_{\Z}\vareps$ to the stack of $\sD_X\brh$-modules with those pairings and connections.

The \'etale site of $X$ is equivalent to that of $\pi^0X$, which in turn is equivalent to that of the formal completion $\hat{Y}$ of $Y$ along $\pi^0X$. In particular, their stacks of $\mu_2$-torsors (equivalently, square roots of the \'etale sheaf $\Z$) are equivalent.


\begin{theorem}\label{DCritDmodthm}
For the $\mu_2$-twisted $\sD_X\brh$-module $\sE$ on $X:=\oR\Crit(Y,f)$ associated to the unique self-dual quantisation with $\hbar\pd_{\hbar}$-connection of the standard exact symplectic structure, we have a canonical quasi-isomorphism
\[
 i_!\sE(i^*\sK_Y\ten_{\Z} \vareps)[\hbar^{-1}] \simeq \hbar^{-\dim Y/2}(\sK_{Y})(\!(\hbar)\!)^f\ten_{\Z} \vareps
\]
of 
right $\sD_{Y}(\!(\hbar)\!)$-modules with $\hbar\pd_{\hbar}$-connection.  

  Its sesquilinear pairing is induced via Remark \ref{pushforwardpairingrmk}, for $m= \dim Y$, by the obvious $\hbar$-semilinear isomorphisms
\begin{align*}
  \sHom_{\sO_{Y}(\!(\hbar)\!)}(\sK_{Y}(\!(\hbar)\!), \hbar^{-m/2}\sK_{Y}(\!(\hbar)\!)^f\ten \vareps)&\cong \hbar^{-m} 
  \sHom_{\sO_{Y}(\!(\hbar)\!)}(\hbar^{-m/2}\sK_{Y}(\!(\hbar)\!)^f\ten \vareps,\sK_{Y}(\!(\hbar)\!)) \\
  \hbar^{-m/2}\sO_{Y}(\!(\hbar)\!)^f\ten_{\Z} \vareps &\cong \hbar^{-m/2}\sO_{Y}(\!(\hbar)\!)^{-f}\ten_{\Z} \Hom_{\Z}(\vareps,\Z)\\ 
 a(\hbar)\ten e &\mapsto a(-\hbar)\ten (e,-)
\end{align*}
of 
left  $\sD_{Y}(\!(\hbar)\!)$-modules with $\hbar\pd_{\hbar}$-connection.
\end{theorem}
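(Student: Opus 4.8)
The plan is to reduce the statement to the explicit description of the quantisation furnished by Proposition \ref{DCritprop} and then perform a Koszul-to-de Rham computation of the pushforward $i_!$. Since the assertion is \'etale-local on $\pi^0 X$ and all the constructions involved are functorial, I would work in local coordinates $(y_i)$ on $Y$ as in Proposition \ref{DCritprop}, where $\sO_X=\Sym_{\sO_Y}(\sT_Y[1])$ is the Koszul complex of the section $df$ of $\Omega^1_Y$ and $\sL:=i^*\sK_Y\cong(\Omega^\ast_Y,\,?\wedge df)[\dim Y]$. By Corollary \ref{quantDmodcor} applied with $\sM=\sK_X$ (which has the canonical square root $\sL$), the $\mu_2$-twisted module attached to the quantisation $(\Delta,D)$ is $\sE={}_{\Delta}\sDiff(\sO_X,\sqrt{\sK}_X)\brh$, so evaluating the twisting functor at the square root $i^*\sK_Y\ten_{\Z}\vareps$ gives ${}_{\Delta}(\sL\brh\ten_{\sO_X}\sD_X)\ten_{\Z}\vareps$ with $\Delta=\hbar d$ and $D$ acting on $\Omega^p_Y$ by $(\dim Y/2-p)-\hbar^{-1}f$, as in Proposition \ref{DCritprop}.

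Next I would compute $i_!\sE=i_*(\sE\ten^{\oL}_{\sD_X}\sD_{X\to Y})$ using the transfer bimodule $\sD_{X\to Y}=\sO_X\ten_{i^{-1}\sO_Y}i^{-1}\sD_Y$ (no higher Tor is needed, since $\sO_X$ is already a complex of free $i^{-1}\sO_Y$-modules). Because $\sDiff(\sO_X,\sL)\simeq\sL\ten_{\sO_X}\sD_X$ is an induced right $\sD_X$-module, the tensor product collapses to $i_*\bigl((\sK_Y\brh\ten_{\sO_Y}\sO_X)\ten_{\sO_Y}\sD_Y\bigr)$ equipped with the Koszul differential of $\sO_X$ together with $\hbar d$ acting through the $\sD_Y$-structure. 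Contracting with the local volume form identifies $\sK_Y\ten_{\sO_Y}\sO_X=\sK_Y\ten_{\sO_Y}\Lambda^\ast\sT_Y$ with $\Omega^\ast_Y$ up to shift, carrying $\iota_{df}$ to $?\wedge df$; hence $i_!\sE$ is, up to the shift dictated by the pushforward convention, the twisted de Rham complex $(\Omega^\ast_Y\brh\ten_{\sO_Y}\sD_Y,\;?\wedge df+\hbar d)$ of right $\sD_Y\brh$-modules. Inverting $\hbar$ and applying the rescaling $\Omega^p_Y\mapsto\hbar^{p-\dim Y/2}\Omega^p_Y$ described after Proposition \ref{DCritprop}, this becomes the de Rham complex of $\sD_Y(\!(\hbar)\!)$ with differential $d+\hbar^{-1}df\wedge$, i.e. the conjugate by ``$e^{f/\hbar}$'' of the ordinary de Rham resolution $[\sD_Y\to\Omega^1_Y\ten\sD_Y\to\cdots\to\sK_Y\ten\sD_Y]$ of the right $\sD_Y$-module $\sK_Y$ recalled in Remark \ref{pushforwardpairingrmk}. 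This identifies $i_!\sE[\hbar^{-1}]$ in the derived category with $\hbar^{-\dim Y/2}\sK_Y(\!(\hbar)\!)^f$, and completing along $\pi^0 X$ gives the asserted $\hbar^{-\dim Y/2}\sK_{\hat Y}(\!(\hbar)\!)^f\ten_{\Z}\vareps$.

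It then remains to match the connection and the pairing. For the $\hbar\pd_{\hbar}$-connection, after the rescaling $\Omega^p_Y\mapsto\hbar^{p-\dim Y/2}\Omega^p_Y$ the summand $(\dim Y/2-p)$ of $D$ is absorbed into $\hbar\pd_{\hbar}$ and one is left exactly with the canonical connection $\hbar\pd_{\hbar}+\hbar^{-1}f$ on $\sK_Y(\!(\hbar)\!)^f$; its compatibility with the de Rham differential is precisely the relation $[\delta+\hbar d,D]=\hbar\,\pd_{\hbar}(\hbar d)$ checked in the proof of Proposition \ref{DCritprop}. For the sesquilinear pairing I would run the pushforward construction of Remark \ref{pushforwardpairingrmk} on the pairing $\tau(a,b)=a^tb$ on $\sDiff(\sO_X,i^*\sK_Y)$: the self-duality $\Delta^\ast=\Delta$, $D^\ast=D$ from Proposition \ref{DCritprop} says exactly that composing $\hbar\mapsto-\hbar$ with the transposition $t$ fixes $(\Delta,D)$, and under the Koszul-to-de Rham identification $t$ becomes cup-product duality $\sHom_{\sO_Y}(\Omega^p_Y,\sK_Y)\cong\Omega^{m-p}_Y$ together with $d^t=d$, $\delta^t=-\delta$; so the induced pairing on $i_!\sE[\hbar^{-1}]$ is the displayed $\hbar$-semilinear isomorphism between $\hbar^{-\dim Y/2}\sO_{\hat Y}(\!(\hbar)\!)^f\ten\vareps$ and $\hbar^{-\dim Y/2}\sO_{\hat Y}(\!(\hbar)\!)^{-f}\ten\Hom_{\Z}(\vareps,\Z)$, pushed forward as in the Remark.

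The main obstacle I expect is the second paragraph: pinning down $i_!$ for the derived (non-classical) closed immersion $i\co\oR\Crit(Y,f)\to Y$ --- that the Koszul complex $\sO_X$ genuinely transmutes $\sDiff(\sO_X,\sL)$ into the twisted de Rham complex with the correct shifts and signs, that the derived tensor product really does collapse, and that passing to the formal completion $\hat Y$ (and to Laurent series in $\hbar$) does not disturb the de Rham-resolution statement --- together with keeping the normalisation by powers of $\hbar$ consistent simultaneously across the module structure, the connection, and the pairing.
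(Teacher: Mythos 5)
Your overall route is the paper's: use Proposition \ref{DCritprop} to make the quantisation explicit, collapse the induced-module tensor product to write $i_!\sE$ as the complex $\sDiff_{\sO_Y}(\sO_Y,(\Omega^{*}_Y\brh,?\wedge df))[m]$ twisted by $\hbar d$, then match connection and pairing by the rescaling $\Omega^p_Y\mapsto\hbar^{p-m/2}\Omega^p_Y$ and the self-duality computations $d^t=d$, $\delta^t=-\delta$. However, the crux step is justified by an argument that would fail. You identify the rescaled complex as ``the conjugate by $e^{f/\hbar}$'' of the de Rham resolution of $\sK_Y$ and conclude that $i_!\sE[\hbar^{-1}]\simeq\hbar^{-m/2}\sK_Y(\!(\hbar)\!)^f$ over $Y$, with completion along $\pi^0X$ added as a final cosmetic step. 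This intermediate identification is false: $e^{f/\hbar}$ is not an element of $\sO_Y(\!(\hbar)\!)$, so the resolution property cannot be transported by conjugation. Indeed, modulo $\hbar$ the complex is the Koszul complex of $df$ (tensored with $\sD_Y$), so by $\hbar$-adic completeness its cohomology is supported on $\Crit(f)=\pi^0X$, whereas $\sK_Y(\!(\hbar)\!)^f$ is a line bundle supported on all of $Y$; the two sides cannot be quasi-isomorphic unless $\Crit(f)=Y$. Moreover the appearance of the formal completion $\hat{Y}$ is forced, not optional: because $\sD\brh$ is $\hbar$-adically complete it contains infinite-order operators, and reducing such operators modulo the twisted differential produces power series in the coordinates (already visible for $Y=\bA^1$, $f=t^2/2$, where $\sum_k\hbar^k\pd_t^k\equiv\sum_k(-t)^k$), so the degree-zero cohomology is the completed module $\sK_{\hat{Y}}(\!(\hbar)\!)^f$ and nothing smaller.

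So the quasi-isomorphism has to be established directly rather than by formal conjugation. This is exactly what the paper's proof supplies: it writes down the explicit surjective right $\sD_Y(\!(\hbar)\!)$-module map $i_!\sE[\hbar^{-1}]\to\sK_{\hat{Y}}(\!(\hbar)\!)^f$ (in top degree, $\alpha\hbar\xi\mapsto-\alpha\xi(f)$) and checks in local coordinates that it is a quasi-isomorphism, the verification being where the completeness in $\hbar$ and the completion along $\pi^0X$ interact; an alternative repair of your argument would be a filtration/completeness argument (invertibility of $\hbar d+df\wedge$ off the critical locus, plus an explicit computation of the cokernel and vanishing of higher cohomology along it). You correctly anticipated this paragraph as the obstacle, but as written the justification proves a statement with the wrong support and leaves the essential verification unproved. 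A smaller bookkeeping point: after the rescaling, $D$ acts as $\hbar\pd_{\hbar}-\hbar^{-1}f$ (as in the remark following Proposition \ref{DCritprop}), while the canonical connection on $\sK_{\hat Y}(\!(\hbar)\!)^f$ is $\hbar\pd_{\hbar}+\hbar^{-1}f$; the signs are reconciled through the sign in the identification $\alpha\hbar\xi\mapsto-\alpha\xi(f)$ and the shift by $-m/2$, so your claim that the degree term is simply ``absorbed'' needs the same explicit map to make sense.
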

\begin{proof}
We will simply write $\sE:= \sE(i^*\sK_Y)$; the general case follows by applying $\ten_{\Z}\vareps$ throughout. 
 Substituting Corollary \ref{quantDmodcor2} into Proposition \ref{DCritprop}, we have 
 \[
i_!\sE \cong {}_{\hbar d}\sDiff_{\sO_Y}(\sO_Y, (\Omega^*_Y\brh ,?\wedge df))[m].
 \]
 Now, the left action $l$ of $\sT_Y[1] \subset \sO_X$ on $i_!\sE$ is given by contraction, and then the commutator of $l(\xi)$ with the total differential  agrees with right multiplication by $\hbar \xi + \xi(f) \in \sD_Y\brh$. Consideration of local co-ordinates then shows that the surjective right $\sD_Y(\!(\hbar)\!)$-module morphism
 \[
i_!\sE[\hbar^{-1}]\to \sK_{Y}(\!(\hbar)\!)^f,
 \]
given in degree $0$ by identifying $\alpha \hbar\xi$ with $ -\alpha \xi(f)$ for $\alpha \in \sDiff_{\sO_Y}(\sO_Y, \Omega^m_Y)$, is a quasi-isomorphism.   
 
The $\hbar\pd_{\hbar}$-connection on $i_!\sE[\hbar^{-1}]$ from  Corollary \ref{quantDmodcor2} and Remark \ref{pushforwardpairingrmk}  corresponds under this quasi-isomorphism to $D- \frac{m}{2}$, for the canonical $\hbar\pd_{\hbar}$-connection $D$ on $\sK_{Y}(\!(\hbar)\!)^f$, so the quasi-isomorphism of  $\sD_Y(\!(\hbar)\!)$-modules with  $\hbar\pd_{\hbar}$-connections $D$ is given by
 \[
  i_!\sE[\hbar^{-1}]\simeq \hbar^{-m/2}\sK_{Y}(\!(\hbar)\!)^f.
 \]
 
 The statement about the pairing then follows immediately from the construction in the proof of Lemma \ref{quantDmodlemma}.
\end{proof}

\begin{example}\label{quadDCritex}
 The $(-1)$-shifted symplectic scheme $X:=\oR\Crit(\bA^1,t^2/2)$ is quasi-isomorphic to $\{0\}$, so for the inclusion $i_0 \co \{0\} \to \bA^1$ and the derived closed immersion $i \co \oR\Crit(\bA^1,t^2/2) \to \bA^1$, Theorem \ref{DCritDmodthm} implies that we must have a canonical isomorphism 
\[
  \hbar^{ -\half}\Omega^1_{\bA^1}(\!(\hbar)\!)^{\frac{t^2}{2}} \cong \sDiff_{\bA^1}(\sO_{\bA^1}, i_{0*}i_0^*\Omega^1_{\bA^1})(\!(\hbar)\!)= i_{0!} \sO_{\bA^0}(\!(\hbar)\!)dt
\]
of right $\sD_{\bA^1}(\!(\hbar)\!)$-modules, intertwining the respective $\hbar^2\pd_{\hbar}$ connections and sesquilinear pairings, 
since both modules are models for $i_!\sE(i^*\Omega^1_{\bA^1})[\hbar^{-1}]$.

From the proof of Theorem \ref{DCritDmodthm}, the respective modules are quasi-isomorphic to (and $\sH_0$ of) the complexes
\[
{}_{\hbar d}\sDiff_{\sO_{\bA^1}}(\sO_{\bA^1}, (\Omega^*_Y (\!(\hbar)\!) ,?\wedge tdt))[1]
\quad \text{and} \quad {}_{0}\sDiff_{\sO_{\bA^1}}(\sO_{\bA^1}, (\Omega^*_Y(\!(\hbar)\!)  ,?\wedge tdt))[1]
\] 
with their respective $\hbar\pd_{\hbar}$-connections  $\hbar\pd_{\hbar} + \half -\deg_{\Omega}-\hbar^{-1}\frac{t^2}{2}$ and $\hbar\pd_{\hbar}$,
corresponding to twists by $(\Delta,D)=(\hbar d, \half - \deg_{\Omega} -\hbar^{-1}\frac{t^2}{2}) $ and $(0,0)$, respectively. 

Let  $\tau \in \sT_{\bA^1}$ be the dual vector to $dt$, so $\oR\Crit(\bA^1,t^2/2)\cong R[t,\tau]$, for $\tau$ of degree $1$ with $\delta \tau = \frac{\pd}{\pd t}(\frac{t^2}{2})=t$. We then have $\Delta (a dt)= \hbar \pd_t\pd_{\tau}(a)dt$ and $D(a dt)= (\tau\pd_{\tau} - \half -\hbar^{-1}\frac{t^2}{2})(a)dt$ for all $a \in \sO_X$. Note that $\delta \pd_t =\pd_{\tau}$, with the latter of chain degree $-1$ and necessarily square-zero. The anti-involution on the ring of differential operators  reverses the signs of the operators $adt \mapsto  \pd_t(a)dt$ and $a dt \mapsto \pd_{\tau}(a)dt$ while fixing multiplication by $t$ and $\tau$.

If for all $a \in \sO_X$ we set 
\[
\phi(a dt):= \hbar\pd_{t}^2(a) dt/2,
\]
 then $\delta(\exp(\phi)) \exp(-\phi)= \Delta$. Also, $\phi$ is self-dual so it gives a gauge transformation between $0$ and $\Delta$, in the sense that $\delta  = \exp(-\phi) \circ(\delta + \Delta) \circ \exp(\phi)$. Thus multiplication by $\exp(-\phi)$ gives an isomorphism from the complex with connection  associated to $(\Delta,D)$ to that associated to $(0, \exp(-\phi)D\exp(\phi) + \exp(-\phi)\hbar\pd_{\hbar}(\exp(\phi)))$. At this point, we know that the new connection term must itself be homotopic to $0$, by acyclicity of the relevant complex,  so will  necessarily act trivially  on $\sH_0$.

For completeness, we now construct the homotopy. We have
\begin{align*}
  -\exp(-\phi)\hbar\pd_{\hbar}(\exp(\phi)) + \exp(-\phi)D\exp(\phi)  &= \hbar\pd_{\hbar}(\phi) + \sum_{n \ge 0} (-\ad_{\phi})^n(D)/n!\\
  &= \phi + D - [\phi,D] +\half [\phi,[\phi,D]] + \ldots .
\end{align*}
Omitting the generator $dt$ to simplify the notation, we have
\begin{align*}
 \ad_{\phi}(D)= [  \hbar\pd_{t}^2/2,  -\half +\tau\pd_{\tau} -\hbar^{-1}t^2/2] &=  -t\pd_t -\half, \\
 \ad_{\phi}^2(D)=  [  \hbar\pd_{t}^2/2, - t\pd_t -\half] &=  -\hbar\pd_t^2, \\
 \ad_{\phi}^3(D) = [  \hbar\pd_{t}^2/2,  -\hbar\pd_t^2] &=0.
\end{align*}
Thus
\begin{align*}
&  \phi + D - [\phi,D] +\half [\phi,[\phi,D]] + \ldots\\ &=  \hbar\pd_{t}^2/2 +(-\half +\tau\pd_{\tau} -\hbar^{-1}t^2/2) - (-t\pd_t -\half) +\half(-\hbar\pd_t^2)\\
 &= \tau\pd_{\tau} + t\pd_t -\hbar^{-1}t^2/2,
 \end{align*}
 which  is just $\delta( \tau \pd_t -\hbar^{-1}\tau t/2)$, so
we can take the homotopy 
\begin{align*}
\theta(a dt)&:= (\tau \pd_t -\hbar^{-1}\tau t/2)(a)dt, \quad a \in \sO_X;\\
\theta(b + cdt) &=  (\frac{\pd c}{\pd t} -\hbar^{-1} ct/2)   \quad b,c \in  \sO_{\bA^1},
\end{align*}
between connections to get from $(0, \exp(-\phi)D\exp(\phi) + \exp(-\phi)\hbar\pd_{\hbar}(\exp(\phi)))$ to $(0,0)$. 

\end{example}

\subsubsection{Monodromic perverse sheaves}\label{monodromicsn}

In the $\Cx$-analytic setting, the  Riemann--Hilbert correspondence allows us to associate a sheaf of $\Cx\brh$-modules on $X(\Cx)=\pi^0X(\Cx)$ to each right $\sD_X\brh$-module $\sE$, by sending $\sE$ to $\sD\sR^r_X(\sE):=\sE\hten^{\oL}_{\sD_X\brh}\sO_X\brh$. When $\sE$ carries an $\hbar^2\pd_{\hbar}$-connection, the same is true of $\sD\sR^r_X(\sE)$. When $\sE$ carries a  sesquilinear pairing $ (\sE)^{\ten 2} \to \sDiff_{\sO_X}(\sO_X, \sM)\brh$, then the reasoning of Remark \ref{pushforwardpairingrmk}
gives us a sesquilinear pairing
\[
\sD\sR^r_X(\sE)\by \sD\sR^r_X(\sE) \to \sD\sR^r_X(\sM\brh). 
\]
When $X$ is Gorenstein and $\sM=\sK_X$, the determinant bundle, then the proof of \cite[Lemma \ref{DQm2-BMlemma}]{DQ-2} identifies the target $ \sD\sR^r_X(\sM\brh)$ with the Verdier $\Cx$-dualising complex $\bD_{\pi^0X(\Cx)}\brh$. In particular, if $i \co \pi^0X \to Y$ is a closed immersion to a smooth scheme, we have a sesquilinear pairing $i_*\sD\sR^r_X(\sE)\by i_*\sD\sR^r_X(\sE) \to \bD_{Y(\Cx)}\brh \simeq \Cx\brh[2 \dim Y]$.

Given a line bundle $\sM$ on $X$ with right $\sD_X$-module structure, applying $\sD\sR^r_X$ to  Corollary \ref{quantDmodcor} thus  gives a natural map $(\Delta,D) \mapsto ({}_{\Delta}\sqrt{\sM}\brh, \hbar D)$ 
from the space $Q\cP^D(X,-1)^{sd}$ to the space of triples $(\vv,\upsilon,\phi)$ for $\vv$ a flat $\mu_2$-twisted   $\Cx\brh$-module with $\vv/\hbar \vv \cong \sqrt{\sM}$, equipped with a sesquilinear pairing $\upsilon \co \vv \by \vv \to \sD\sR^r(\sM\brh)$ and an  orthogonal $\hbar^2\pd_{\hbar}$-connection $\phi\co \vv \to \vv$.

In particular, if $X$ is an exact $(-1)$-shifted symplectic $\Cx$-analytic derived DM $n$-stack, then as in Corollary \ref{quantDmodcor2} we can apply this to the essentially unique self-dual deformation quantisation $\Delta$ with $\hbar\pd_{\hbar}$-connection $D$, taking $\sM=\sK_X$.  

For the twisted de Rham complex $\sD\sR_{f,Y} = (\Omega^{\#}_Y(\!(\hbar)\!), d + \hbar^{-1} df)[\dim Y]$, 
we  then have the following immediate consequence of Theorem \ref{DCritDmodthm}. 

\begin{corollary}\label{DCritDiffPervcor}
 For the $\mu_2$-twisted $\sD_X\brh$-module $\sE$ on $X:=\oR\Crit(Y,f)$ associated to the unique self-dual quantisation $\Delta$ with $\hbar\pd_{\hbar}$-connection $D$ of the standard exact symplectic structure, we have a canonical quasi-isomorphism
\[
 i_*\sD\sR^r_X(\sE)[\hbar^{-1}] (i^*\sK_Y\ten \vareps) \simeq (\sD\sR_{f,Y}\ten_{\Z} \vareps,  \hbar\pd_{\hbar}- \hbar^{-1}f - \dim Y/2)
\]
of perverse  $\Cx(\!(\hbar)\!)$-sheaves with $\hbar\pd_{\hbar}$-connection supported on $\pi^0X$.

The sesquilinear pairing 
$i_*\sD\sR^r_X(\sE)[\hbar^{-1}]\by i_*\sD\sR^r_X(\sE)[\hbar^{-1}] \to \Cx(\!(\hbar)\!) [2 \dim Y]$ corresponds to  the  $\Cx(\!(\hbar)\!)$-linear cup product
\begin{align*}
 &(\sD\sR_{-f,Y}\ten_{\Z} \vareps,\hbar\pd_{\hbar}+ \hbar^{-1}f - \dim Y/2 )\ten_{\Cx(\!(\hbar)\!)} (\sD\sR_{f,Y}\ten_{\Z} \vareps, \hbar\pd_{\hbar}- \hbar^{-1}f - \dim Y/2 ) \\
 &\xra{\hbar^{-\dim Y}\smile}  (\Omega^{\bt}_Y(\!(\hbar)\!)[2 \dim Y], \hbar\pd_{\hbar})
\end{align*}
under the $\hbar$-semilinear isomorphism $\sD\sR_{f,Y} \to \sD\sR_{-f,Y}$ sending $a(\hbar)$ to $a(-\hbar)$ and the isomorphism $\vareps\ten_{\Z}\vareps \cong \Z$.
 \end{corollary}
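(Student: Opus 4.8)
The plan is to deduce the corollary from Theorem \ref{DCritDmodthm} by applying the right de Rham functor $\sD\sR^r$ and inverting $\hbar$, using the compatibility of $\sD\sR^r$ with closed pushforward and with sesquilinear pairings recalled at the start of \S \ref{monodromicsn}. The right de Rham functor intertwines the $\sD$-module pushforward $i_!$ along the closed immersion $i\co\pi^0X\to Y$ with the sheaf-theoretic $i_*$, and the de Rham complex of a $\sD$-module supported on $\pi^0X$ is unchanged under passage between $Y$ and its formal completion $\hat Y$ along $\pi^0X$; so applying $\sD\sR^r(-)[\hbar^{-1}]$ to Theorem \ref{DCritDmodthm} gives a canonical quasi-isomorphism
\[
 i_*\sD\sR^r_X\bigl(\sE(i^*\sK_Y\ten_\Z\vareps)\bigr)[\hbar^{-1}] \;\simeq\; \sD\sR^r_{\hat Y}\bigl(\hbar^{-\dim Y/2}\,\sK_{\hat Y}(\!(\hbar)\!)^f\ten_\Z\vareps\bigr),
\]
intertwining the $\hbar\pd_\hbar$-connections carried by the two sides (the one of Corollary \ref{quantDmodcor2} on the left, that induced by the canonical connection $D=\hbar\pd_\hbar+\hbar^{-1}f$ on the right).

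Next I would identify the right-hand side with $(\sD\sR_{f,Y}\ten_\Z\vareps,\hbar\pd_\hbar-\hbar^{-1}f-\dim Y/2)$. The key input is the standard description $\sD\sR^r_Y\bigl(\sK_Y(\!(\hbar)\!)^f\bigr)\cong\sD\sR_{f,Y}=(\Omega^{\#}_Y(\!(\hbar)\!),d+\hbar^{-1}df)[\dim Y]$, obtained (as in \cite{SabbahSaito}) from the Spencer-type resolution computing $\sD\sR^r$ together with the isomorphisms $\sK_Y\ten_{\sO_Y}\wedge^k\sT_Y\cong\Omega^{\dim Y-k}_Y$, under which the right $\sD_Y(\!(\hbar)\!)$-action defining $\sN(\!(\hbar)\!)^f$ becomes the twisted differential $d+\hbar^{-1}df$. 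Under this identification the de Rham functor carries the canonical connection $D=\hbar\pd_\hbar+\hbar^{-1}f$ to $\hbar\pd_\hbar-\hbar^{-1}f$, the sign of the $\hbar^{-1}f$ term being forced by compatibility with $d+\hbar^{-1}df$ (one checks $[\hbar\pd_\hbar-\hbar^{-1}f,\,d+\hbar^{-1}df]=0$, whereas $[\hbar\pd_\hbar+\hbar^{-1}f,\,d+\hbar^{-1}df]\ne0$); and the scalar factor $\hbar^{-\dim Y/2}$ conjugates $\hbar\pd_\hbar$ to $\hbar\pd_\hbar-\dim Y/2$. Since $\sK_Y(\!(\hbar)\!)^f$ is holonomic over $\Cx(\!(\hbar)\!)$, its de Rham complex is perverse by the Riemann--Hilbert correspondence, and $i_*$ preserves perversity along a closed immersion; supportedness on $\pi^0X$ is immediate from the defining differential $d+\hbar^{-1}df$. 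This establishes the displayed quasi-isomorphism of the corollary.

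Finally, for the pairing I would push the non-degenerate sesquilinear pairing on $\sE$ valued in $\sDiff_{\sO_X}(\sO_X,\sK_X)\brh$ through Remark \ref{pushforwardpairingrmk}, applied to $i$ and the square root $i^*\sK_Y$ of $\sK_X$, and then apply $\sD\sR^r(-)[\hbar^{-1}]$; by the Gorenstein identification $\sD\sR^r_Y(\sK_Y\brh)\simeq\bD_{Y(\Cx)}\brh\simeq\Cx\brh[2\dim Y]$ from the proof of \cite[Lemma \ref{DQm2-BMlemma}]{DQ-2}, this yields the required $\Cx(\!(\hbar)\!)[2\dim Y]$-valued sesquilinear pairing on $i_*\sD\sR^r_X(\sE)[\hbar^{-1}]$. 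Comparing it with the cup product reduces to tracing the $\hbar$-semilinear isomorphisms of Theorem \ref{DCritDmodthm} through $\sD\sR^r$: the isomorphism $\sD\sR_{f,Y}\to\sD\sR_{-f,Y}$, $a(\hbar)\mapsto a(-\hbar)$, realises the $\hbar\mapsto-\hbar$ part of sesquilinearity (so the left factor acquires the sign $-f$ and the connection $\hbar\pd_\hbar+\hbar^{-1}f-\dim Y/2$), the duality $\vareps\ten_\Z\vareps\cong\Z$ records the $\mu_2$-twist, and the two factors of $\hbar^{-\dim Y/2}$ combine to the normalisation $\hbar^{-\dim Y}$ in front of $\smile$. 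I expect this last step --- the simultaneous bookkeeping of the $f\mapsto-f$ flip, the $\hbar\mapsto-\hbar$ semilinearity, the $\vareps$-duality and the $\hbar^{-\dim Y}$ twist --- to be the only genuinely delicate point; the rest is a formal application of the de Rham dictionary to Theorem \ref{DCritDmodthm}.
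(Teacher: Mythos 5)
Your proposal is correct and follows essentially the same route as the paper, which presents this corollary as an immediate consequence of Theorem \ref{DCritDmodthm} once the dictionary set up at the start of \S \ref{monodromicsn} is in place: apply $\sD\sR^r(-)[\hbar^{-1}]$ to the theorem, use compatibility of the de Rham functor with $i_!$, identify $\sD\sR^r$ of $\sK_{\hat Y}(\!(\hbar)\!)^f$ with the twisted de Rham complex (with the $\hbar^{-\dim Y/2}$ factor producing the $-\dim Y/2$ shift), and push the sesquilinear pairing through Remark \ref{pushforwardpairingrmk} and the Verdier-dual identification. Your sign-pinning of the induced $\hbar\pd_{\hbar}$-connection via commutation with $d+\hbar^{-1}df$ is exactly the bookkeeping the paper leaves implicit.
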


For the Riemann--Hilbert correspondence $\mathrm{RH}^{-1}$ relating differential perverse sheaves and perverse sheaves with an automorphism as in \cite[\S 2.3]{GunninghamSafronov} or \cite[\S 4.2]{SchefersDerivedVFiltrn}, we thus  have the following.

\begin{corollary}\label{DcritAutPervCor}
 For the $\mu_2$-twisted $\sD_X\brh$-module $\sE$ on $X:=\oR\Crit(Y,f)$ associated to the unique self-dual quantisation $\Delta$ with $\hbar\pd_{\hbar}$-connection $D$ of the standard exact symplectic structure, we have a canonical quasi-isomorphism
\[
 \sD\sR^r_X(\sE)[\hbar^{-1}] (i^*\sK_Y\ten \vareps) \simeq \mathrm{RH}^{-1} (\phi_f\ten_{\Z} \vareps, (-1)^{\dim Y} T) 
 \]
of differential perverse sheaves on $\pi^0X=\Crit(Y,f)$, 
assuming $\pi^0X \subset f^{-1}\{0\}$, where $\phi_f:= \phi_f(\uline{\Cx}_Y[\dim Y])$ is the  perverse sheaf of vanishing cycles. 

Under this quasi-isomorphism, the sesquilinear pairing on $\sE$ corresponds, for $i \co \pi^0X \into Y$, to the $\Cx$-linear pairing
\[
(i_*\phi_{-f}\ten_{\Z} \vareps, (-1)^{\dim Y} T)\ten_{\Cx}(i_*\phi_f\ten_{\Z} \vareps, (-1)^{\dim Y} T)\to  (\uline{\Cx}_Y [2 \dim Y], \id)  
\] 
of monodromic sheaves on $Y$ given by combining the Thom--Sebastiani isomorphism $\phi_{(-f)\boxplus f} \cong \phi_{-f}\boxtimes \phi_f$ on $\pi^0(X \by X)$ with 
functoriality along the diagonal 
$
\Delta \co (Y,0) \to (Y\by Y,(-f)\boxplus f)$.  
 \end{corollary}
\begin{proof}
 By \cite[Theorem 8.21]{SchefersDerivedVFiltrn} or \cite[Corollary 5.11]{GunninghamSafronov} (tensoring the latter by $\vareps$), we have an equivalence 
 \[
  \mathrm{RH}^{-1} (\phi_f\ten_{\Z} \vareps,  T) \simeq (\sD\sR_{f,Y}\ten_{\Z} \vareps,  \hbar\pd_{\hbar}- \hbar^{-1}f).
 \]
Since $\mathrm{RH}(\Cx(\!(\hbar)\!), \hbar\pd_{\hbar} +\lambda) \cong  (\Cx, \exp(2 \pi i \lambda))$, the 
		first
result follows by taking the  tensor product with that for $\lambda = - \dim Y/2$,   then applying Corollary \ref{DCritDiffPervcor}. 

The cup product pairing of Corollary \ref{DCritDiffPervcor} can be written as 
\[
\Delta^{-1}(\sD\sR_{(-f)\boxplus f,Y\by Y}\ten_{\Z} (\vareps\boxtimes \vareps),\hbar\pd_{\hbar}- \hbar^{-1}((-f)\boxplus f) - \dim Y ) \xra{\hbar^{-\dim Y}\Delta^{\sharp}}  (\sD\sR_{0,Y},  \hbar\pd_{\hbar})[\dim Y]
\]
since $\Delta^{-1}\sD\sR_{(-f)\boxplus f,Y\by Y} = \sD\sR_{-f,Y}\ten_{\Cx}\sD\sR_{f,Y}$   
and $\Delta^{-1}(\vareps\boxtimes \vareps) =\uline{\Z}_Y$. By compatibility of $\mathrm{RH}^{-1}$ with the Thom--Sebastiani isomorphism \cite[Proposition 5.15]{GunninghamSafronov}  and functoriality, this corresponds (after shifting by $2 \dim Y$ and $\Cx$-linearising) to the  pairing
\[
i_*\phi_{-f}(\vareps) \ten i_*\phi_f(\vareps) = \Delta^{-1}(i,i)_*\phi_{(-f)\boxplus f}(\vareps \boxtimes \vareps)
\xra{\Delta^{\sharp}} 
\phi_0(\Delta^{-1}(\vareps\boxtimes \vareps))
= \vareps\ten \vareps = \uline{\Z}_Y
\]
on $Y$. 
\end{proof}

\begin{remark}\label{pairingmonodromyrmk}
  For any $a \in \Cx$, translating the universal cover $\Cx$ of $\Cx^*$ by $a$ gives an isomorphism $\phi_f \cong \phi_{\exp(2 \pi i a)f}$, with $a=1$ corresponding to the monodromy automorphism $T$. Thus taking $a=\half$ gives  isomorphisms $ T_f^{\half} \co \phi_f \longleftrightarrow \phi_{-f} \co T_{-f}^{\half}$
 which  are not inverses, but instead factorise the monodromy operator $T$ on both sides. 

Under the isomorphism $T_f^{\half}$ (denoted $\tilde{T}_f^{\pi}$ in the proof of \cite[Theorem 3.1]{masseyVanishing}), the pairing between $i_*\phi_{-f}$ and $i_*\phi_f$ in Corollary  \ref{DcritAutPervCor} corresponds to the Verdier self-duality pairing  $\phi_f\ten \phi_f \to i^!\uline{\Cx}_Y[2 \dim Y] = \bD_{\pi^0X} $ in \cite{masseyVanishing}, via the  characterisation in the proofs of \cite[Propositions A.1 and A.2]{BBDJS}. 


\end{remark}

\subsection{Comparison with \tps{\cite{BBDJS}}{BBDJS} and \tps{\cite{HHR1}}{HHR}}

A perverse sheaf of vanishing cycles  on $(-1)$-shifted symplectic derived $\Cx$-schemes $(X,\omega)$ is constructed  in \cite[Corollary 6.11]{BBDJS} by gluing together local charts by derived critical loci. That gluing method is systematised and refined in \cite{HHR1} to induce constructions on $(-1)$-shifted symplectic derived DM $\Cx$-stacks from those on Landau-Ginzburg pairs. Our construction from Corollary \ref{quantDmodcor2} does not depend on  derived critical charts, but the characterisation of \cite{HHR1} isolates the conditions needed to  establish a comparison with \cite{BBDJS}.
 
\cite[Theorem A]{HHR1} shows that the contractible \'etale  sheaf of $\infty$-groupoids on $X$ can be expressed as  a quotient  $\uline{\mathrm{Darb}}_X^{\omega,\bA^1}/\uline{\mathrm{Quad}}_X^{\nabla,\bA^1}$ of a stack of Darboux data by the action of a monoid stack of quadratic data. In \cite[\S 6.3]{HHR1}, this is applied to construct a vanishing cycles sheaf on $X$, proved to be equivalent to the construction of \cite{BBDJS}.

Specifically, \cite[Construction 6.33]{HHR1} gives a functor $\mathup{P} \co \uline{\mathrm{Darb}}_X^{\omega,\bA^1} \to \uline{\mathrm{Perv}}_X$ to the $1$-category of perverse sheaves on $\pi^0X$, with $ \mathup{P}(U,f):=\phi_f(\uline{\Z}_U[\dim U])$, and \cite[Corollary 6.38]{HHR1} refines this to a functor $\uline{\mathrm{Darb}}_X^{\omega,\bA^1}/\uline{\mathrm{Quad}}_X^{\nabla,\bA^1} \to \uline{\mathrm{Perv}}_X/B\mu_2$ to the $2$-category of $\mu_2$-twisted perverse sheaves. Since the source is contractible, this canonically associates a $\mu_2$-twisted perverse sheaf to $(X,\omega)$.

As in \cite[Theorem 6.3.8]{HHR1}, the $\mu$-twist is the orientation class of $\sK_X$, so by taking the fibre over $o(\sK_X) \in \Gamma(X,B^2\mu_2)$  we can phrase this as an object in the $1$-category $(\uline{\mathrm{Perv}}_X/B\mu_2)\by^h_{B^2\mu_2}o(\sK_X)$ of $o(\sK_X)$-twisted perverse sheaves.

For the functors $\sD\sR^r_X$ and $\mathrm{RH}^{-1}$ as in \S \ref{monodromicsn}, and $X_{\an}$ the analytification of $X$, we then have the following. 
\begin{theorem}\label{cfHHRthm}
 Given a $(-1)$-shifted symplectic derived DM $\Cx$-stack $(X,\omega)$, the following $\mu_2$-twisted differential perverse sheaves  
 on $\pi^0X_{\an}$   with $\hbar$-sesquilinear $\bD_{\pi^0X^{\an}}(\!(\hbar)\!)$-valued pairing are canonically isomorphic:
 \begin{itemize}
  \item $\sD\sR^r_{X_{\an}}(\sE)[\hbar^{-1}]$,  
  for $\sE$ the formal deformation of  $\sDiff(\sO_X^{\an}, \sqrt{\sK}_X^{\an})$ given by applying Corollary \ref{quantDmodcor2} 
  to $(X_{\an}, \omega^{\an})$
  with its canonical exact structure, and 
    \item $\mathrm{RH}^{-1}(\mathup{P}_{X,\omega}\ten_{\Z}\Cx)$, for the perverse sheaf $\mathup{P}_{X,\omega}$ 
    of \cite[Theorem 6.3.9]{HHR1} 
 equipped with its canonical monodromy operator
 and  Verdier duality pairing $\mathup{P}_{X,-\omega}\ten_{\Z}\mathup{P}_{X,\omega} \to \bD_{\pi^0X^{\an}}$, where $\mathup{P}_{X,-\omega}\cong \mathup{P}_{X,\omega}$ via $T^{\half}$. 
 \end{itemize}
 \end{theorem}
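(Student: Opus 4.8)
The plan is to reduce the global statement to the chart-wise comparison already carried out in Corollary \ref{DcritAutPervCor}, using the descent presentation of $\mathup{P}_X$ from \cite{HHR1}. The left-hand assignment $X \mapsto \sD\sR^r_{X_{\an}}(\sE)[\hbar^{-1}]$ is globally defined and functorial in $X$ without any choices: Corollary \ref{quantDmodcor2} (via Proposition \ref{exactcompprop} and Corollary \ref{exactquantcor}, together with the canonical exactness of $(-1)$-shifted symplectic structures from \cite[Proposition 2.2.11]{HHR1}) produces $\sE$ from the canonical exact structure, the space of such self-dual quantisations with $\hbar\pd_\hbar$-derivation being contractible by Corollary \ref{exactquantcor}; and $\sD\sR^r$ together with $\hbar$-inversion from \S \ref{monodromicsn} are manifestly functorial. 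On the other side, $\mathup{P}_X$ is obtained in \cite[Corollary 6.38 and Theorem 6.3.9]{HHR1} as the descent of the functor $\mathup{P}\colon \uline{\mathrm{Darb}}_X^{\omega,\bA^1}\to\uline{\mathrm{Perv}}_X$, $\mathup{P}(U,f)=\phi_f(\uline{\Z}_U[\dim U])$ with its canonical monodromy, along the quotient by $\uline{\mathrm{Quad}}_X^{\nabla,\bA^1}$, whose target is contractible by \cite[Theorem A]{HHR1}. It therefore suffices to construct a natural isomorphism of $\mathrm{RH}^{-1}(\mathup{P}\ten_\Z\Cx)$ with the restriction of $\sD\sR^r_{X_{\an}}(\sE)[\hbar^{-1}]$ to $\uline{\mathrm{Darb}}_X^{\omega,\bA^1}$, and to check it is equivariant for the monoid-stack action of $\uline{\mathrm{Quad}}_X^{\nabla,\bA^1}$, whence it descends.

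The chart-wise isomorphism is exactly Corollary \ref{DcritAutPervCor}. A point of $\uline{\mathrm{Darb}}_X^{\omega,\bA^1}$ over an \'etale $U\to X$ is a presentation $U\simeq\oR\Crit(Y,f)$ with $Y$ smooth (normalised so $\pi^0U\subset f^{-1}\{0\}$), compatibly with the exact symplectic structures; by rigidity, $\sE|_U$ coincides with the quantisation computed in Proposition \ref{DCritprop} and Theorem \ref{DCritDmodthm}. Corollary \ref{DcritAutPervCor} then supplies a canonical quasi-isomorphism
\[
 \sD\sR^r_{U_{\an}}(\sE)[\hbar^{-1}](i^*\sK_Y\ten\vareps) \simeq \mathrm{RH}^{-1}(\phi_f\ten_\Z\vareps,\,(-1)^{\dim Y}T),
\]
whose right-hand side is $\mathrm{RH}^{-1}$ of the $\Cx$-linearisation of $\mathup{P}(U,f)\ten_\Z\vareps$ with its monodromy, the sign $(-1)^{\dim Y}$ and the $\vareps$-twist being precisely the orientation datum of \cite[Theorem 6.3.8]{HHR1} (the sign coming from $\mathrm{RH}(\Cx(\!(\hbar)\!),\hbar\pd_\hbar-\dim Y/2)\cong(\Cx,\exp(-\pi i\dim Y))$, matched by the normalisation in \cite[Theorem 6.3.9]{HHR1}). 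Naturality in $U$ and under \'etale refinement follows from functoriality of $\widehat{\Pol}(-,-1)^{\smile,\inv}$, of its Rees and $\sD\sR^r$ constructions, and of $\mathup{P}$.

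Finally, one checks compatibility with the morphisms of $\uline{\mathrm{Darb}}_X^{\omega,\bA^1}$ — stabilisations $(Y,f)\rightsquigarrow(Y\times\bA^{2k},f\boxplus q)$ by non-degenerate quadratic forms and symplectomorphisms of Landau--Ginzburg pairs — and with the $\uline{\mathrm{Quad}}_X^{\nabla,\bA^1}$-action. On the $\mathup{P}$ side these are built into \cite[Construction 6.33]{HHR1} via Thom--Sebastiani and the behaviour of the monodromy. On the quantisation side, rigidity forces $\sE|_U$ to be independent of the presentation, and the transformation of the comparison isomorphisms under a quadratic stabilisation is the computation of Example \ref{quadDCritex} applied to the $\bA^{2k}$-factor, where the explicit self-dual gauge transformation $\exp(-\phi)$ trivialises the quantisation of $\oR\Crit(\bA^{2k},q)\simeq\{0\}$ and produces exactly the orientation shift recorded by the $\uline{\mathrm{Quad}}$-action. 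Hence the isomorphism is natural on $\uline{\mathrm{Darb}}_X^{\omega,\bA^1}$ and $\uline{\mathrm{Quad}}_X^{\nabla,\bA^1}$-equivariant, so it descends along the contractible quotient to the asserted canonical isomorphism. I expect the main obstacle to be precisely this last step: matching our normalisations — the square-root $\vareps$ of $\uline{\Z}$, the $\hbar^{-\dim Y/2}$ twists, and the shift $\lambda=-\dim Y/2$ in $\mathrm{RH}$ — uniformly with the orientation data and canonical monodromy operator of \cite{HHR1} across all stabilisations, and confirming that the functoriality of the quantisation-theoretic construction is strong enough to compare with $\mathup{P}$ as functors, not merely on objects.
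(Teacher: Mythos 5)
Your overall strategy is the same as the paper's: use Corollary \ref{DcritAutPervCor} chart-by-chart on $\uline{\mathrm{Darb}}_X^{\omega,\bA^1}$, then descend along the contractible quotient by $\uline{\mathrm{Quad}}_X^{\nabla,\bA^1}$ by checking compatibility with quadratic stabilisation, with Thom--Sebastiani on the vanishing-cycles side. The reduction steps you list (rigidity of the quantisation, functoriality of $\widehat{\Pol}(-,-1)^{\smile,\inv}$ and $\sD\sR^r$, the fact that the target is a $1$-category of twisted perverse sheaves so only local commutativity of squares must be verified) all match the paper.

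The gap is exactly where you say you expect it, and your proposed fix does not close it. You assert that the computation of Example \ref{quadDCritex} on the quadratic factor ``produces exactly the orientation shift recorded by the $\uline{\mathrm{Quad}}$-action'', i.e.\ that the chart-wise isomorphisms are strictly equivariant. That computation only shows the quantisation of $\oR\Crit(\bA^1,x^2)$ is gauge-trivial; it does not, and cannot, identify the resulting trivialisation with the specific isomorphism $(\mathup{P}(\bA^1,x^2),-T)\cong(\Z,\id)$ chosen in \cite[Example 2.14]{BBDJS} that enters $\gamma_{Q,q,f}$, so strict compatibility is not available by this route. The paper's resolution is different: after localising, using transitivity of the monoid action to reduce to a rank-one stabilisation, and using that self-duality is preserved by external tensor products (so $\eta$ reduces to $\eta_{\bA^1,x^2,0}$ and Thom--Sebastiani to K\"unneth), the two comparisons are both determined by trivialisations of the one-dimensional space $\mathrm{RH}^{-1}(\mathup{P}(\bA^1,x^2)\ten_{\Z}\Cx,-T)$, hence differ by a single scalar $c\in\Cx^{\by}$ independent of the chart; one then renormalises $\alpha_{U,f}$ by $c^{\dim U}$ (compatible with stabilisation precisely because dimensions add under Thom--Sebastiani/K\"unneth), as in \cite[Remark 6.10]{GunninghamSafronov}. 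Without this one-dimensionality-plus-rescaling argument, or an explicit verification pinning down the normalisations you list ($\vareps$, the $\hbar^{-\dim Y/2}$ twist, the $\lambda=-\dim Y/2$ shift) against the conventions of \cite{BBDJS,HHR1}, the descent step of your proof is not established.
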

 In particular, a choice of square root for $\sK_X$ gives a canonical isomorphism between  $\mathrm{RH}(\sD\sR^r_{X_{\an}}(\sE)[\hbar^{-1}])$ and the self-dual monodromic perverse sheaf $(\mathup{P}^{\bt}_{X,\omega}\ten_{\Z}\Cx, T_{X,\omega})$ of  \cite[Theorem 6.9]{BBDJS}, by \cite[Remark 6.3.5]{HHR1}. 
\begin{proof}
Both perverse sheaves have $\mu_2$-twist $o(\sK_X)$, so we can work in the $1$-category of complex $o(\sK_X)$-twisted  differential perverse sheaves on $\pi^0X_{\an}$ equipped with $\hbar$-sesquilinear pairings. 

Since Corollary \ref{DcritAutPervCor} satisfies \'etale functoriality in $Y$, it gives a natural isomorphism $\alpha$ of functors on $\uline{\mathrm{Darb}}_X^{\omega,\bA^1}$ between the constant functor  $\sD\sR^r_{X_{\an}}(\sE)[\hbar^{-1}]$ and the functor induced by the algebraic critical invariant $(U,f) \mapsto \mathrm{RH}^{-1}(\mathup{P}(U,f)\ten_{\Z}\Cx, (-1)^{\dim U}T)$ of \cite[Construction 6.3.3]{HHR1}.
Under this isomorphism, the sesquilinear pairing of Corollary \ref{DcritAutPervCor} corresponds to the canonical Verdier duality pairing between   $\mathup{P}(U,f)$  and $ \mathup{P}(U,-f) =T^{\half}\mathup{P}(U,f)$; as in Remark \ref{pairingmonodromyrmk}, composing this with  $T^{\half}$ gives the  self-duality isomorphism on  $\mathup{P}(U,f)$ from \cite{BBDJS}. 

Since our target is a $1$-category, to show that this descends to a natural isomorphism of functors on $\uline{\mathrm{Darb}}_X^{\omega,\bA^1}/\uline{\mathrm{Quad}}_X^{\nabla,\bA^1} \simeq \ast_X$, it suffices to show locally that for every non-degenerate quadratic bundle $(Q,q)$ of rank $r$ on $U$, with $m=\dim U$,  the diagram
\[
 \begin{CD} \sD\sR^r_{X_{\an}}(\sE)[\hbar^{-1}]|_{\oR\Crit(U,f)}  @>{\alpha_{U,f}}>> \mathrm{RH}^{-1}(\mathup{P}(U,f)\ten_{\Z}\Cx, (-1)^{m}T) \\
 @V{\eta_{Q,q,f}}VV @VV{\gamma_{Q,q,f}}V\\
 \sD\sR^r_{X_{\an}}(\sE)[\hbar^{-1}]|_{\oR\Crit(Q,q+f)}   @>{\alpha_{Q, q+f}}>> \mathrm{RH}^{-1}(\mathup{P}(Q,q+f)\ten_{\Z}\Cx, (-1)^{m+r}T)
   \end{CD}
\]
commutes, where $\eta$ is the equivalence induced by the canonical equivalence $\oR\Crit(U,f) \into \oR\Crit(Q,q+f) $ of $(-1)$-shifted symplectic derived stacks, and $\gamma$ is the equivalence of \cite[Lemma 6.3.6b]{HHR1} or \cite[Theorem 5.4]{BBDJS}.

Since the target of the functor is an \'etale stack, it suffices to work locally, so we may assume that $(Q,q)= (\bA^r_U, \sum_i x_i^2)$. 
By transitivity of the monoid action, this in turn reduces to the case $r=1$.

The isomorphism $\gamma_{\bA^1_U ,x^2 ,f} $ combines the Thom--Sebastiani isomorphism 
\[
  (\mathup{P}(\bA^1 \by U  ,x^2+ f ), (-1)^{m+1}T) \cong   (\mathup{P}(\bA^1 ,x^2), -T) \boxtimes_{\Z}(\mathup{P}(U ,f), (-1)^{m}T)
 \]
with a choice $(\mathup{P}(\bA^1 ,x^2), -T) \cong (\Z,\id)$ of isomorphism as in \cite[Example 2.14]{BBDJS}.

On the other side, the quasi-isomorphism $\eta_{\bA^1_U,x^2,f}$ corresponds under Corollary \ref{DcritAutPervCor} to a quasi-isomorphism
\[
  (\sD\sR_{f,U_{\an}}\ten_{\Z} \vareps,  \hbar\pd_{\hbar}- \hbar^{-1}f - \frac{m}{2}) \to 
(\sD\sR_{x^2+f,\bA^1\by U_{\an}}\ten_{\Z} \vareps,  \hbar\pd_{\hbar}- \hbar^{-1}(x^2+f) - \frac{m+1}{2}).
 \]
 Self-duality of quantisations is preserved by external tensor products, so this corresponds to tensoring with the quasi-isomorphism
 \[
\eta_{\bA^1,x^2,0} \co  (\sD\sR_{0,\Spec \Cx}, \hbar\pd_{\hbar}) \to (\sD\sR_{x^2,\bA^1_{\an}}, \hbar\pd_{\hbar}- \hbar^{-1}(x^2) -\half). 
 \]

 The  Thom--Sebastiani isomorphism also corresponds under the equivalence of  \cite[Theorem 8.21]{SchefersDerivedVFiltrn} or \cite[Corollary 5.11]{GunninghamSafronov} to the K\"unneth isomorphism for twisted de Rham cohomology, by \cite[Proposition 5.15]{GunninghamSafronov}. Our comparison is thus determined by the respective trivialisations of $\mathrm{RH}^{-1}(\mathup{P}(\bA^1 ,x^2)\ten_{\Z}\Cx, -T)$. Since we are then comparing isomorphisms between one-dimensional inner product spaces, the choice is unique up to multiplication by  
 $c \in \{\pm 1\} = \{ c(\hbar) \in \ker(\pd_{\hbar}) \cap \Cx(\!(\hbar)\!) ~:~ c(-\hbar)c(\hbar) =1 \}$. 
 As observed in \cite[Remark 6.10]{GunninghamSafronov}, multiplying $ \alpha_{U,f}$ by $c^{\dim U}$ then gives a natural isomorphism with the required compatibility. 
\end{proof}

\bibliographystyle{alphanum}
\bibliography{references.bib}

\end{document}